	\newcommand{\mD}{\mathfrak D}
	\newcommand{\md}{\mathfrak d}
	\newcommand{\nabh}{\nabla_{\! h}}
	\newcommand{\hf}{\nicefrac{1}{2}}
	\newcommand{\nrm}[1]{\left\| #1 \right\|}
	\newcommand{\ciptwo}[2]{\left( #1 , #2 \right)}
	\newcommand{\eipns}[2]{\left[ #1 , #2 \right]_{\rm ns}}
	\newcommand{\eipew}[2]{\left[ #1 , #2 \right]_{\rm ew}}
	\newcommand{\viptwo}[2]{\left\langle #1, #2 \right\rangle}
	\newcommand{\monenrm}[1]{\left\| #1 \right\|_{-1}}
	\newcommand{\moneinn}[1]{\left( #1 \right)_{-1}}
	\newcommand{\msfT}{\mathsf{T}}
	\def\x{\mbox{\boldmath $x$}}
	\newcommand{\iprd}[2]{\left( #1 , #2 \right)}
    \def\norm#1#2{\left\| #1 \right\|_{#2}}
	\newtheorem{thm}{Theorem}[section]
	\newtheorem{prop}[thm]{Proposition}
	\newtheorem{cor}[thm]{Corollary}
	\newtheorem{lem}[thm]{Lemma}
	\newtheorem{rem}[thm]{Remark}
	\newif\ifblackandwhitecycle
	\gdef\patternnumber{0}
		        \gdef\patternnumber{1}
		        \gdef\patternnumber{1}
		     \gdef\patternnumber{0}
		        \pgfgetlastxy{\imagewidth}{\imageheight}
		        \global\let\imagewidth=\imagewidth
		        \global\let\imageheight=\imageheight
		        \gdef\columncount{1}
		        \gdef\rowcount{1}
		\newcommand\phantomimage{%
		    \phantom{%
		        \rule{\imagewidth}{\imageheight}%
		    }%
		}
		\newcommand\zoombox[2][]{
		    \begin{scope}[zoombox paths]
		        \pgfmathsetmacro\xpos{
		            (\columncount-1)*(\imagewidth / \pgfkeysvalueof{/tikz/zoomboxarray columns} + \pgfkeysvalueof{/tikz/zoomboxarray inner gap} / \pgfkeysvalueof{/tikz/zoomboxarray columns} ) + \pgflinewidth
		        }
		        \pgfmathsetmacro\ypos{
		            (\rowcount-1)*( \imageheight / \pgfkeysvalueof{/tikz/zoomboxarray rows} + \pgfkeysvalueof{/tikz/zoomboxarray inner gap} / \pgfkeysvalueof{/tikz/zoomboxarray rows} ) + 0.5*\pgflinewidth
		        }
		        \edef\dospy{\noexpand\spy [
		            #1,
		            zoombox paths/.append style={
		                black and white pattern=\patternnumber
		            },
		            every spy on node/.append style={#1},
		            x=\imagewidth,
		            y=\imageheight
		        ] on (#2) in node [anchor=north west] at ($(zoomboxes container.north west)+(\xpos pt,-\ypos pt)$);}
		        \dospy
		        \pgfmathtruncatemacro\pgfmathresult{ifthenelse(\columncount==\pgfkeysvalueof{/tikz/zoomboxarray columns},\rowcount+1,\rowcount)}
		        \global\let\rowcount=\pgfmathresult
		        \pgfmathtruncatemacro\pgfmathresult{ifthenelse(\columncount==\pgfkeysvalueof{/tikz/zoomboxarray columns},1,\columncount+1)}
		        \global\let\columncount=\pgfmathresult
		        \ifblackandwhitecycle
		            \pgfmathtruncatemacro{\newpatternnumber}{\patternnumber+1}
		            \global\edef\patternnumber{\newpatternnumber}
		        \fi
		    \end{scope}
		}
\begin{document}
 
\title{An Energy Stable Finite-Difference Scheme for Functionalized Cahn-Hilliard Equation and its Convergence Analysis}

	\author{
Wenqiang Feng\thanks{Department of Mathematics, The University of Tennessee, Knoxville, TN 37996 (wfeng1@vols.utk.edu)}
	\and
Zhen Guan\thanks{Department of Mathematics, The University of California, Irvine, CA 92697 (zguan2@math.uci.edu)}
	\and
John Lowengrub\thanks{Department of Mathematics, The University of California, Irvine, CA  92967 (lowengrb@math.uci.edu)}
	\and		
Cheng Wang\thanks{Department of Mathematics, The University of Massachusetts, North Dartmouth, MA  02747 (Corresponding Author: cwang1@umassd.edu)}	
	\and
Steven M. Wise\thanks{Department of Mathematics, The University of Tennessee, Knoxville, TN 37996 (swise1@utk.edu)} 
}

\maketitle
	
\numberwithin{equation}{section}
	
\begin{abstract}
We present and analyze an unconditionally energy stable and convergent finite difference scheme for the Functionalized Cahn-Hilliard equation. One key difficulty associated with the energy stability is based on the fact that one nonlinear energy functional term in the expansion appears as non-convex, non-concave. To overcome this subtle difficulty, we add two auxiliary terms to make the combined term convex, which in turns yields a convex-concave decomposition of the physical energy.  As a result, an application of the convex splitting methodology assures both the unique solvability and the unconditional energy stability of the proposed numerical scheme. To deal with a 4-Laplacian solver in an $H^{-1}$ gradient flow at each time step, we apply an efficient preconditioned steepest descent algorithm to solve the corresponding nonlinear systems. In addition, a global in time $H_{\rm per}^2$ stability of the numerical scheme is established at a theoretical level, which in turn ensures the full order convergence analysis of the scheme.  A few numerical results are presented, which confirm the stability and accuracy of the proposed numerical scheme. 
\end{abstract}

{\bf Keywords:}
Functionalized Cahn-Hilliard equation, finite difference method, stability, convergence analysis, preconditioned steepest descent solver

{\bf AMS Subject Classification:} 35K35, 35K55, 65M06, 65M12

\section{Introduction}
The Functionalized Cahn-Hilliard (FCH) model was first derived to describe phase separation of an amphiphilic mixture in \cite{Gompper90}. More recent work may be found in~\cite{dai2013geometric, doelman2014meander, gavish2011curvature, gavish2012variational, Promislow09, promislow2015existence}, where, in particular, the FCH equations were extended to describe membrane bilayers~\cite{dai2013geometric,doelman2014meander}, membranes and networks undergoing pearling bifurcations~\cite{promislow2015existence,doelman2014meander}, the formation of pore-like and micelle network structures~\cite{gavish2011curvature,gavish2012variational,promislow2015existence}. Consider  the standard Cahn-Hilliard (CH) energy \cite{allen79,cahn61,cahn58} given by 
\begin{equation}\label{energy-CH}
{\cal F}_0(\phi) = \int_\Omega\left\{\frac{1}{4}  \phi^4   
   	   - \frac12 \phi^2 
	   + \frac{\varepsilon^2}{2} \Bigl| \nabla \phi \Bigr|^2 \right\}d{\bf x} , 
\end{equation} 
with $\Omega\subset \mathbb{R}^D$, $D = 2$ or 3. The phase variable $\phi:\Omega\rightarrow \mathbb{R}$ is the concentration field, and $\varepsilon$ is the width of interface. We assume that $\Omega = (0,L_x)\times(0,L_y) \times (0,L_z)$ and $\phi$ is $\Omega$-periodic. The Cahn-Hillard  chemical potential becomes
	\begin{equation}
	\label{chem-poten}
\mu_0 := \delta_\phi {\cal F}_0 =  \phi^3 - \phi   - \varepsilon^2 \Delta \phi,
	\end{equation}
where $\delta_\phi$ denotes the variational derivative with respect to $\phi$.
Herein we consider a dimensionless energy of a binary mixture: 
	\begin{equation}
	\label{energy-FCH}
{\cal F} (\phi) = \frac{\varepsilon^{-2}}{2} \int_\Omega \mu_0^2 d{\bf x}  - \eta {\cal F}_0 (\phi),
	\end{equation}
where $\eta$ is a parameter.  When $\eta > 0$ and $\eta < 0$, \eqref{energy-FCH} represents the  FCH energy \cite{doelman2014meander, Hsu83,Promislow09} and the Cahn-Hilliard-Willmore (CHW) energy \cite{torabi2009new, torabi2007new, wise2007solving},  respectively. Furthermore, \eqref{energy-FCH} represents the strong FCH energy when $\eta = \varepsilon^{-1}$ and weak FCH energy when $\eta = 1$ \cite{doelman2014meander}.
By the definition of CH energy in \eqref{energy-CH} and chemical potential in \eqref{chem-poten}, we have 
	\begin{align}
\mu := \delta_\phi {\cal F} = & \ 3\varepsilon^{-2} \phi^5 - \left( 4 \varepsilon^{-2} +  \eta \right) \phi^3  +\left( \varepsilon^{-2}  + \eta \right) \phi + \varepsilon^{2} \Delta^2 \phi + \left( 2 + \eta \varepsilon^2 \right)  \Delta \phi
	\nonumber 
	\\
& + 6 \phi \left| \nabla \phi \right|^2   - 6 \nabla \cdot \left( \phi^2 \nabla \phi \right).  
	\end{align}
The conserved $H^{-1}$ gradient flow \cite{doelman2014meander,jones2013development,Promislow09} is given by 
\begin{equation}\label{equation-FCH-Hm1}
\partial_t\phi = \nabla\cdot\left(M(\phi)\nabla\mu\right),
\end{equation}
where $M(\phi) > 0$ is a diffusion mobility.

The FCH equation \eqref{equation-FCH-Hm1} is a sixth-order, highly nonlinear parabolic equation. Numerical approximation of \eqref{equation-FCH-Hm1} is very challenging because of the high derivative order and the high nonlinearity. One of the biggest challenges is to overcome the numerical stiffness encountered with time-space discretization. Roughly speaking, since the equation is sixth-order parabolic, an explicit numerical scheme is expected to encounter a severe CFL condition: $s\le C h^6$, with $s$ and $h$ the time and space step sizes. On the other hand, a fully implicit scheme, such as the backward Euler method, may still be only conditionally stable, and, very likely, will only be conditionally solvable. Ideally, one would like a scheme that preserves some of the time-invariant quantities of the PDE, such as mass conservation and the energy dissipation rate. The first invariant is easily maintained, while the second one is a major challenge. \emph{Often, one attempts only to design a scheme that will dissipate the free energy at the numerical level, without directly controlling  the rate of dissipation.} In particular, one wants $\mathcal{F}(\phi^{k+1}) \le \mathcal{F}(\phi^k)$, where $\phi^k$ is the approximated phase variable at time step $k$, given some mild CFL condition, or no CFL condition whatever.  The energy dissipativity imparts some stability notion for the PDE and the numerical method, as we will see. If $\mathcal{F}(\phi^{k+1}) \le \mathcal{F}(\phi^k)$, for all $k\ge 1$, with no condition on the time step size, we say that the scheme is \emph{unconditionally strongly energy stable}. Finally, for large-scale calculations in practice, novel efficient numerical linear and nonlinear solvers have to be carefully developed. We will address this issue in the paper as well.

There have been a few previous  works on the numerical approximation of the FCH equation.  In \cite{chen2012efficient}, Chen \emph{et al.}~presented an efficient linear, first-order (in time) spectral-Galerkin method for the FCH equation.  Their scheme, which utilized linear stabilization terms, is unconditionally solvable, but not necessary energy stable. Jones studied a semi-implicit numerical scheme for the FCH equation in his PhD~thesis \cite{jones2013development}. He proved the energy stability of his scheme but not the unique solvability. In a more recent work, \cite{Christlieb14high}, fully implicit schemes with pseudo-spectral approximation in space for the FCH equation are proposed. While the authors of~\cite{Christlieb14high} proved neither energy stability nor solvability, they did carry out several tests to show the accuracy and efficiency of their methods.  In another work \cite{guo2015local}, Guo \emph{et al.}~presented a local liscontinuous Galerkin (LDG) method to overcome the difficulty associated with the higher order spatial derivatives. Energy stability was established for the semi-discrete (time-continuous) scheme. Their fully discrete scheme was based on the time discretization in~\cite{chen2012efficient}.  In~\cite{wangx16} the authors developed a Runge-Kutta exponential time integration  (EKR) method for the diffuse Willmore flow, an equation that is closely related to the FCH and CHW models~\eqref{equation-FCH-Hm1}. This method works well when $M\equiv 1$, but may need to be significantly modified otherwise.  It enables one to generate high-order single-step methods, which have a significant advantage over multistep methods when the time step changes adaptively.  To our knowledge, there has been no rigorous convergence analysis for the FCH model in the existing literature.

In this paper we propose and analyze an efficient computational scheme for solving the FCH equation primarily, though the theory will be applicable to the CHW equation as well. We use the convex splitting method, which treats that part of the chemical potential $\mu$ coming from the convex part implicitly in the time discretization, and that coming from the concave part, explicitly. It has been a popular approach for gradient flows, since it ensures the unique solvability and unconditional stability; see the related works \cite{aristotelous13b, baskaran2013energy, chen12, chen16, chen14, diegel2015analysis, diegel15b, guo15a, hu2009stable, shen12, wang2011energy, wang10a, wise09a} for a wide class of phase field models.  For the FCH equation (\ref{equation-FCH-Hm1}), the key difficulty is that the energy does not have a straightforward convex-concave splitting. To overcome this difficulty, we add and subtract a non-trivial auxiliary term in the energy functional. Subsequently, a convex-concave decomposition for the FCH energy is available, and the first-order-in-time convex-splitting scheme is automatically available; both the unique solvability and unconditional energy stability follow immediately. 

As a result of the proposed numerical scheme, a 4-Laplacian term has to be solved in an $H^{-1}$ gradient flow at each time step in the finite difference approximation, which turns out to be very challenging. We apply a preconditioned steepest descent (PSD) solver, recently proposed and analyzed in \cite{feng2016preconditioned}, to solve the nonlinear system. The main idea is to use a linearized version of the nonlinear operator as a pre-conditioner, or in other words, as a metric for choosing the search direction. The convexity of the nonlinear energy functional assures the geometric convergence the PSD iteration sequence. In practice, only few constant-coefficient Poisson-like equations need to be solved at each iteration stage, which greatly improves the numerical efficiency over Newton-type methods.

On the theoretical side, we also present a global in time $H_{\rm per}^2$ stability of the numerical scheme. This uniform in time bound enables us to derive the full order convergence analysis, with first order temporal accuracy and second order spatial accuracy. In addition, such a convergence is unconditional, without any requirement between the time step size $s$ and the spatial mesh $h$. To the authors' knowledge, this is the first such theoretical result for the FCH/CHW model. 

This article is organized as follows. In Section~\ref{sec:1stCS} we describe the convex-splitting framework with auxiliary terms and the global in time $H_{\rm per}^2$ stability of the numerical scheme. In Section~\ref{sec:convergence} we present the main results of our analysis, including the consistency, stability and convergence of our scheme. The finite difference approximation is outlined in Section~\ref{sec:fdm}, and the preconditioned steepest descent solver is formulated in \ref{sec:psd}. Subsequently, a few numerical results are presented in Section~\ref{sec:num}, respectively. Finally, we give some concluding remarks and some future work in Section~\ref{sec:conclusion}.

	\section{The first order convex splitting scheme}
	\label{sec:1stCS} 

	\subsection{Some preliminaries}
For simplicity of presentation, we denote $( \cdot , \cdot )$ as the standard $L^2$ inner product and $\| \cdot \|$ as the standard $L^2$ norm. We use the notation $H_{\rm per}^{-1}(\Omega) = \left(H_{\rm per}^1(\Omega)\right)^*$, and $\langle  \, \cdot \, , \, \cdot \, \rangle$ is the duality paring between $H_{\rm per}^{-1}(\Omega)$ and $H_{\rm per}^1(\Omega)$.  To define an energy for this system we need a norm on a subspace of $H_{\rm per}^{-1}(\Omega)$. With $\mathring{L}^2(\Omega)$ denoting those function in $L^2(\Omega)$ with zero mean, we set
	\begin{equation}
\mathring{H}_{\rm per}^1(\Omega) = H_{\rm per}^1(\Omega)\cap \mathring{L}^2(\Omega), \quad \mathring{H}_{\rm per}^{-1}(\Omega) :=\left\{v\in H_{\rm per}^{-1}(\Omega) \ \middle| \ \langle v , 1 \rangle = 0  \right\}.
	\end{equation}
Next, we define a linear operator $\mathsf{T} : \mathring{H}_{\rm per}^{-1}(\Omega) \rightarrow \mathring{H}_{\rm per}^1(\Omega)$ via the following variational problem: given $\zeta\in \mathring{H}_{\rm per}^{-1}(\Omega)$, find $\mathsf{T}(\zeta)\in \mathring{H}_{\rm per}^1(\Omega)$ such that
	\begin{equation}
\iprd{\nabla \mathsf{T}(\zeta)}{\nabla\chi} = \langle \zeta, \chi\rangle , \qquad \forall \chi\in \mathring{H}_{\rm per}^1(\Omega).
	\label{defn-T-operator}
	\end{equation}
$\mathsf{T}$ is well-defined, as guaranteed by the Riesz Representation Theorem.  The following facts can be easily established \cite{diegel2015analysis,feng2016preconditioned}. 

\begin{lem}
\label{lem-negative-norm}
Let $\zeta,\, \xi \in\mathring{H}_{\rm per}^{-1}(\Omega)$ and, for such functions, we set
\begin{equation}
\left(\zeta,\xi\right)_{\mathring{H}_{\rm per}^{-1}} :=\iprd{\nabla \mathsf{T}(\zeta)}{\nabla\mathsf{T}(\xi)} =\langle\zeta,\mathsf{T}(\xi)\rangle  = \langle \xi, \mathsf{T}(\zeta)\rangle.
\label{crazy-inner-product}
\end{equation}
Then, $\left(\, \cdot\, ,\, \cdot\, \right)_{\mathring{H}_{\rm per}^{-1}}$ defines an inner product on $\mathring{H}_{\rm per}^{-1}(\Omega)$, and the induced norm is equivalent to (in fact, equal to) the operator norm:
\begin{equation}
\norm{\zeta}{\mathring{H}^{-1}_{\rm per}} := \sqrt{\left(\zeta,\zeta\right)_{\mathring{H}_{\rm per}^{-1}}} = \sup_{0\ne \chi\in\mathring{H}^1_{\rm per}} \frac{\langle \zeta , \chi\rangle}{\norm{\nabla\chi}{}} . 
\label{crazy-norm-minus-one}
\end{equation}
Consequently, we have $\left|\langle \zeta , \chi\rangle\right| \le \norm{\zeta}{\mathring{H}^{-1}_{\rm per}} \nrm{\nabla\chi}$, for all $\chi\in H_{\rm per}^1(\Omega)$ and $\zeta\in\mathring{H}_{\rm per}^{-1}(\Omega)$.  Furthermore, for all $\zeta\in \mathring{L}^2(\Omega)$, we have the Poincar\'e type inequality: $\norm{\zeta}{\mathring{H}^{-1}_{\rm per}} \le C  \norm{\zeta}{}$, for some $C>0$. 
\end{lem}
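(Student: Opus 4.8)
The plan is to verify each assertion in turn; everything flows directly from the defining variational problem \eqref{defn-T-operator} and the Riesz representation already invoked, so the proof is essentially bookkeeping around the Lax--Milgram/Riesz setup. First, for the three-fold identity in \eqref{crazy-inner-product}: given $\zeta,\xi\in\mathring{H}_{\rm per}^{-1}(\Omega)$, I would take the test function $\chi=\mathsf{T}(\xi)\in\mathring{H}_{\rm per}^1(\Omega)$ in \eqref{defn-T-operator} applied to $\zeta$, which gives $\iprd{\nabla\mathsf{T}(\zeta)}{\nabla\mathsf{T}(\xi)}=\langle\zeta,\mathsf{T}(\xi)\rangle$. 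Swapping the roles of $\zeta$ and $\xi$ and using symmetry of the $L^2$ inner product on the left yields $\iprd{\nabla\mathsf{T}(\zeta)}{\nabla\mathsf{T}(\xi)}=\langle\xi,\mathsf{T}(\zeta)\rangle$ as well, so all three expressions coincide.

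Next, that $\left(\cdot,\cdot\right)_{\mathring{H}_{\rm per}^{-1}}$ is an inner product: bilinearity is immediate from linearity of $\mathsf{T}$ and bilinearity of the $L^2$ pairing, and symmetry is the identity just established. For positive-definiteness, $\left(\zeta,\zeta\right)_{\mathring{H}_{\rm per}^{-1}}=\nrm{\nabla\mathsf{T}(\zeta)}^2\ge 0$, and vanishing forces $\nabla\mathsf{T}(\zeta)=0$, hence $\mathsf{T}(\zeta)$ constant; since $\mathsf{T}(\zeta)\in\mathring{H}_{\rm per}^1(\Omega)$ has zero mean, $\mathsf{T}(\zeta)\equiv 0$, and then $\langle\zeta,\chi\rangle=\iprd{\nabla\mathsf{T}(\zeta)}{\nabla\chi}=0$ for every $\chi\in\mathring{H}_{\rm per}^1(\Omega)$, i.e.\ $\zeta=0$ in $\mathring{H}_{\rm per}^{-1}(\Omega)$.

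For the identification of the induced norm with the operator (dual) norm in \eqref{crazy-norm-minus-one}: for the lower bound, given $\zeta\ne 0$ I choose $\chi=\mathsf{T}(\zeta)$, which is nonzero by the previous paragraph, so that $\langle\zeta,\mathsf{T}(\zeta)\rangle/\nrm{\nabla\mathsf{T}(\zeta)}=\nrm{\nabla\mathsf{T}(\zeta)}=\norm{\zeta}{\mathring{H}^{-1}_{\rm per}}$, showing the supremum is at least $\norm{\zeta}{\mathring{H}^{-1}_{\rm per}}$. For the reverse bound, for any $\chi\in\mathring{H}_{\rm per}^1(\Omega)$ the Cauchy--Schwarz inequality in $L^2$ gives $\langle\zeta,\chi\rangle=\iprd{\nabla\mathsf{T}(\zeta)}{\nabla\chi}\le\nrm{\nabla\mathsf{T}(\zeta)}\,\nrm{\nabla\chi}=\norm{\zeta}{\mathring{H}^{-1}_{\rm per}}\nrm{\nabla\chi}$; dividing and taking the supremum finishes it. The one mildly delicate point is that the supremum in \eqref{crazy-norm-minus-one} is over $\mathring{H}_{\rm per}^1(\Omega)$ while the stated consequence $\abs{\langle\zeta,\chi\rangle}\le\norm{\zeta}{\mathring{H}^{-1}_{\rm per}}\nrm{\nabla\chi}$ is asserted for all $\chi\in H_{\rm per}^1(\Omega)$: I would handle this by splitting $\chi=\mathring\chi+\bar\chi$ with $\bar\chi$ the mean of $\chi$, noting $\langle\zeta,\bar\chi\rangle=\bar\chi\,\langle\zeta,1\rangle=0$ since $\zeta\in\mathring{H}_{\rm per}^{-1}(\Omega)$, and $\nrm{\nabla\chi}=\nrm{\nabla\mathring\chi}$, then applying the bound to $\mathring\chi$. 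This free passage between test functions in $H_{\rm per}^1$ and in $\mathring{H}_{\rm per}^1$, made harmless by the mean-zero constraint on $\zeta$, is the only place requiring any care.

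Finally, for the Poincar\'e-type inequality, given $\zeta\in\mathring{L}^2(\Omega)\subset\mathring{H}_{\rm per}^{-1}(\Omega)$ the duality pairing reduces to the $L^2$ inner product, so for any $\chi\in\mathring{H}_{\rm per}^1(\Omega)$ one has $\iprd{\zeta}{\chi}\le\nrm{\zeta}\,\nrm{\chi}\le C\nrm{\zeta}\,\nrm{\nabla\chi}$ by the standard Poincar\'e inequality on zero-mean periodic functions; taking the supremum and using the characterization just proved gives $\norm{\zeta}{\mathring{H}^{-1}_{\rm per}}\le C\nrm{\zeta}$. I do not anticipate any genuine obstacle in this lemma — it is a convenient repackaging of standard elliptic-duality facts — and the relevant care is confined to the mean-value bookkeeping noted above.
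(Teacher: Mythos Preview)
Your proof is correct and complete. Note, however, that the paper does not actually supply a proof of this lemma: it merely states the result and remarks that ``the following facts can be easily established,'' citing \cite{diegel2015analysis,feng2016preconditioned}. So there is no paper proof to compare against; your argument is precisely the standard elliptic-duality verification one would expect, and your attention to the mean-value bookkeeping (passing from $\mathring{H}^1_{\rm per}$ to $H^1_{\rm per}$ test functions via $\langle\zeta,1\rangle=0$) is the right detail to flag.
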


\subsection{The convex-concave energy decomposition with auxiliary terms} 
For any $\phi \in H_{\rm per}^2(\Omega)$, the FCH energy in \eqref{energy-FCH} may be expanded as 
	\begin{align}
{\cal F} (\phi) = & \  \frac{\varepsilon^{-2} }{2}  \nrm{ \phi }_{L^6}^6  - \left( \varepsilon^{-2} + \frac{\eta}{4} \right) \nrm{ \phi }_{L^4}^4 + \left( \frac{\varepsilon^{-2} }{2} + \frac{\eta}{2} \right) \nrm{ \phi }^2  + \frac{\varepsilon^2}{2} \nrm{ \Delta \phi }^2
	\nonumber 
	\\
& - \left( 1 + \frac{\eta \varepsilon^2}{2} \right)  \nrm{ \nabla \phi }^2 
  + 3 \int_\Omega \phi^2 \left| \nabla \phi \right|^2 d{\bf x} .
	\label{energy-FCH-1}
	\end{align} 
Unlike the energies for the AC~\cite{feng2014analysis}, CH~\cite{aristotelous13b, diegel15b, eyre98, feng2016analysis, guo15a}, Phase Field Crystal (PFC)~\cite{baskaran2013energy, hu2009stable, wang2011energy, wise09a}, epitaxial thin~\cite{chen12, chen14, shen12, wang10a} equations, the convex splitting idea cannot be directly applied to the FCH energy~\eqref{energy-FCH}. The main difficulty is associated with the last term in \eqref{energy-FCH-1},
	\begin{equation} 
\mathcal{G}(\phi) := \int_\Omega 3\phi^2 \left| \nabla \phi \right|^2 d{\bf x} ,   \label{convexity-G-1}
	\end{equation} 
which is neither convex nor concave. To overcome this difficulty, we perform a careful analysis for the following energy functional: 
	\begin{equation}
\mathcal{H}(\phi) := \int_\Omega \left( A ( \phi^4 + | \nabla \phi |^4 ) + 3 \phi^2 \left| \nabla \phi \right|^2  \right) d{\bf x} .
	\label{convexity-H-1}
	\end{equation} 

	\begin{lem}
	\label{lem:Hconv} 
$\mathcal{H}:W^{1,4}_{\rm per}(\Omega)\to\mathbb{R}$ is convex provided that $A \ge 1$. 
	\end{lem}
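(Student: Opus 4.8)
The plan is to prove convexity of $\mathcal{H}$ by showing that its second variation is nonnegative. Since $\mathcal{H}(\phi) = \int_\Omega W(\phi, \nabla\phi)\, d{\bf x}$ with integrand $W(u, {\bf p}) = A(u^4 + |{\bf p}|^4) + 3 u^2 |{\bf p}|^2$, the functional $\mathcal{H}$ is convex on $W^{1,4}_{\rm per}(\Omega)$ as soon as $W$ is convex as a function of the pair $(u, {\bf p}) \in \mathbb{R} \times \mathbb{R}^D$; indeed, convexity of the integrand in $(u,\nabla u)$ jointly is a standard sufficient (and here essentially necessary) condition for convexity of the integral functional, since for $\phi = \lambda \phi_1 + (1-\lambda)\phi_2$ one has $(\phi, \nabla\phi) = \lambda(\phi_1, \nabla\phi_1) + (1-\lambda)(\phi_2, \nabla\phi_2)$ pointwise, and one integrates the pointwise convexity inequality. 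So the task reduces to a finite-dimensional statement: show that $W : \mathbb{R}\times\mathbb{R}^D \to \mathbb{R}$ is convex when $A \ge 1$.

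First I would reduce the dimension further. The term $A u^4$ is convex by itself, so it suffices to handle $g(u,{\bf p}) := A|{\bf p}|^4 + 3u^2|{\bf p}|^2$, or equivalently show $W$ convex by checking the Hessian. Writing $r = |{\bf p}|$, I expect that $g$ depends on ${\bf p}$ only through $r$, but convexity in $(u,{\bf p})$ is \emph{not} the same as convexity in $(u,r)$ because ${\bf p}\mapsto r$ is not affine; one must be careful. The clean route is to compute the full Hessian of $W$ with respect to $(u, p_1, \dots, p_D)$ and show it is positive semidefinite. Using $\partial_{p_i} |{\bf p}|^4 = 4 r^2 p_i$ and $\partial_{p_i} |{\bf p}|^2 = 2 p_i$, one gets blocks: $W_{uu} = 12 A u^2 + 6 r^2$; $W_{u p_i} = 12 u p_i$; and $W_{p_i p_j} = (8A r^2 + 6 u^2)\delta_{ij} + (8A + 12 u^2) p_i p_j - $ wait, more carefully $\partial_{p_j}(4A r^2 p_i) = 8A p_j p_i + 4A r^2 \delta_{ij}$ and $\partial_{p_j}(6 u^2 p_i) = 6u^2\delta_{ij}$, so $W_{p_i p_j} = (4A r^2 + 6u^2)\delta_{ij} + 8A p_i p_j$. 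To test positive semidefiniteness, contract the Hessian against a test vector $(a, {\bf b}) \in \mathbb{R}\times\mathbb{R}^D$ and show the resulting quadratic form is $\ge 0$; it equals
\[
Q = (12Au^2 + 6r^2) a^2 + 24 u\, a\, ({\bf p}\cdot{\bf b}) + (4A r^2 + 6u^2)|{\bf b}|^2 + 8A ({\bf p}\cdot{\bf b})^2 .
\]
Decomposing ${\bf b}$ into components parallel and perpendicular to ${\bf p}$, the perpendicular part contributes only the manifestly nonnegative $(4Ar^2 + 6u^2)|{\bf b}_\perp|^2$, so it remains to check nonnegativity of the $2\times 2$ form in the variables $(a, t)$ where $t = ({\bf p}\cdot{\bf b})/r$ captures the parallel component: this form is $(12Au^2 + 6r^2)a^2 + 24 u r\, a t + (4Ar^2 + 6u^2 + 8Ar^2) t^2 = (12Au^2 + 6r^2)a^2 + 24 u r\, at + (12Ar^2 + 6u^2) t^2$. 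Its discriminant condition is $(12 u r)^2 \le (12Au^2 + 6r^2)(12Ar^2 + 6u^2)$, i.e. $144 u^2 r^2 \le 144 A^2 u^2 r^2 + 72 A u^4 + 72 A r^4 + 36 u^2 r^2$, which rearranges to $0 \le (144 A^2 - 108) u^2 r^2 + 72A(u^4 + r^4)$; since $u^4 + r^4 \ge 2 u^2 r^2$, the right side is $\ge (144A^2 + 144A - 108) u^2 r^2$, and $144A^2 + 144A - 108 \ge 0$ already for $A \ge 1$ (indeed for $A \ge (\sqrt{37}-3)/12 \approx 0.257$). Hence $Q \ge 0$.

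The main obstacle I anticipate is purely organizational: correctly tracking the mixed Hessian term $24 u\, a\, ({\bf p}\cdot{\bf b})$ and realizing that the ${\bf b}_\perp$ direction must be split off before the discriminant test — if one naively tried to diagonalize the full $(D+1)\times(D+1)$ Hessian the cross terms obscure the structure. Once the parallel/perpendicular decomposition is in place the inequality is elementary. An alternative, perhaps cleaner for write-up, is to note that $A(u^4 + r^4) + 3u^2 r^2 = A(u^2+r^2)^2 + (3 - 2A) u^2 r^2$ and observe that $(u^2 + r^2)^2$ and, when $A \le 3/2$, also $u^2 r^2 = \tfrac14\big((u^2+r^2)^2 - (u^2 - r^2)^2\big)$ can be analyzed via convexity of $s \mapsto s^2$ composed with the convex map $(u,{\bf p}) \mapsto u^2 + |{\bf p}|^2$; but the direct Hessian computation above is the most robust and I would present that, with the threshold $A \ge 1$ stated as (more than) sufficient.
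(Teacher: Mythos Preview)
Your proof is correct and complete: reducing to pointwise convexity of the integrand $W(u,{\bf p})$, computing its Hessian, splitting ${\bf b}$ into components parallel and perpendicular to ${\bf p}$, and checking the $2\times 2$ determinant all go through as written. One small slip: the threshold coming from $144A^2+144A-108\ge 0$ is $A\ge \tfrac12$ (the quadratic factors as $36(2A-1)(2A+3)$), not $(\sqrt{37}-3)/12$; this does not affect the stated result for $A\ge 1$.

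Your route is genuinely different from the paper's. The paper never differentiates: it works with the midpoint-convexity inequality directly, bounding the midpoint \emph{defect} of the cross term $3\phi^2|\nabla\phi|^2$ below by $-\tfrac38\big((\phi_1^2-\phi_2^2)^2+(|\nabla\phi_1|^2-|\nabla\phi_2|^2)^2\big)$ and the midpoint \emph{surplus} of each quartic term below by $+\tfrac38$ times the corresponding square, so that $A\ge 1$ makes the sum nonnegative. Your Hessian argument is more systematic and, as you observed, yields a sharper sufficient constant essentially for free. The paper's elementary balancing argument, on the other hand, transports almost verbatim to the fully discrete energy $\mathcal{H}_h$ (Lemma~4.2 in the paper), where the discrete gradient $\nabla_h^v$ involves averaging operators and a clean Hessian computation would be less transparent; this portability is the practical payoff of their approach.
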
 

	\begin{proof}  
We denote $g (\phi) := 3 \phi^2 \left| \nabla \phi \right|^2$ and $h (\phi) := A ( \phi^4 + | \nabla \phi |^4 ) + g(\phi)$, so that $\mathcal{G}(\phi) = \int_\Omega g (\phi)\, d{\bf x}$ and $\mathcal{H}(\phi) = \int_\Omega h (\phi)\, d{\bf x}$. Based on the pointwise inequalities, 
	\begin{equation*} 
\left( \frac{\phi_1 + \phi_2 }{2}  \right)^2  \le  \frac{\phi_1^2 + \phi_2^2 }{2}  ,  \quad  \left| \nabla \left( \frac{ \phi_1 + \phi_2 }{2}  \right) \right|^2  \le  \frac{|\nabla \phi_1 |^2 + | \nabla \phi_2 |^2 }{2} ,    \quad  \forall \phi_1 , \, \phi_2 , 
	\end{equation*} 
which come from the convexity of $q_2(x) =x^2$ and $r_2(\x) = \x\cdot\x$, we find that
	\begin{equation*} 
g \left( \frac{\phi_1 + \phi_2 }{2}  \right)   = 3 \left( \frac{\phi_1 + \phi_2 }{2}  \right)^2  \left| \nabla \left( \frac{ \phi_1 + \phi_2 }{2}  \right) \right|^2  \le  3 \frac{\phi_1^2 + \phi_2^2 }{2} \cdot \frac{ |\nabla \phi_1 |^2 + | \nabla \phi_2 |^2 }{2} . 
	\end{equation*}  
A careful comparison with $\frac{g (\phi_1) + g (\phi_2)}{2} 
= \frac{3 \phi_1^2 \left| \nabla \phi_1 \right|^2 + 3 \phi_2^2 \left| \nabla \phi_2 \right|^2}{2}$ shows that 
\begin{eqnarray} 
\frac{g (\phi_1) + g (\phi_2)}{2} - g \left( \frac{\phi_1 + \phi_2 }{2}  \right)   
&\ge&  \frac{ 3 ( \phi_1^2 - \phi_2^2 ) ( |\nabla \phi_1 |^2 - | \nabla \phi_2 |^2 ) }{4}   \nonumber 
\\
&\ge& - \frac38 \left(  ( \phi_1^2 - \phi_2^2 )^2 
  +  ( |\nabla \phi_1 |^2 - | \nabla \phi_2 |^2 )^2 \right) .  
\label{lemma 1-3}
\end{eqnarray} 
Meanwhile, the convexity of $q_4 (x) =x^4$ and $r_4 (\x) = | \x|^4$ indicates the following inequalities: 
\begin{eqnarray} 
\frac{\phi_1^4 + \phi_2^4 }{2} - \left( \frac{\phi_1 + \phi_2 }{2}  \right)^4  
&\ge& \frac{3}{8} ( \phi_1^4 + \phi_2^4 - 2 \phi_1^2 \phi_2^2 ) 
= \frac{3}{8} ( \phi_1^2  - \phi_2^2 )^2 ,   
\label{lemma 1-4-1}
\end{eqnarray} 
and
\begin{eqnarray}
\frac{ | \nabla \phi_1 |^4 + | \nabla \phi_2 |^4 }{2} 
- \left| \nabla \left( \frac{\phi_1 + \phi_2 }{2}  \right) \right|^4  
&\ge& \frac{3}{8} ( | \nabla \phi_1 |^4 + | \nabla \phi_2 |^4 
 - 2 | \nabla \phi_1 |^2 \cdot | \nabla \phi_2 |^2 )   \nonumber 
\\
&=& \frac{3}{8} ( | \nabla \phi_1 |^2  - | \nabla \phi_2 |^2 )^2 .   
\label{lemma 1-4-2}
\end{eqnarray} 
A combination of \eqref{lemma 1-3}, \eqref{lemma 1-4-1} and \eqref{lemma 1-4-2} implies that 
	\begin{equation*} 
\frac{h (\phi_1) + h (\phi_2)}{2} - h \left( \frac{\phi_1 + \phi_2 }{2}  \right)  \ge 0 ,  \quad  \forall \phi_1 , \, \phi_2 ,  \quad 
	\end{equation*} 
provided that $A \ge 1$. As a result, an integration over $\Omega$ leads to the following fact: 
	\begin{equation*} 
\frac{\mathcal{H} (\phi_1) + \mathcal{H} (\phi_2)}{2} - \mathcal{H} \left( \frac{\phi_1 + \phi_2 }{2}  \right)  \ge 0 ,  \quad\forall \phi_1 , \, \phi_2 ,  \quad  \mbox{if $A \ge 1$} . 
	\end{equation*} 
The convexity of $H$ is assured under the condition $A \ge 1$.
	\end{proof}  
 
	\begin{cor}
The energy ${\cal F}:H^2_{\rm per}(\Omega)\to\mathbb{R}$ possesses a convex splitting over $H^2_{\rm per}(\Omega)$. In particular, 
	\begin{equation} 
{\cal F} (\phi) = {\cal F}_c (\phi) - {\cal F}_e (\phi) ,
	\label{eqn:convexity-eng} 
	\end{equation} 
with
	\begin{equation} 
{\cal F}_c (\phi) := \int_\Omega  \left\{ \frac{\varepsilon^{-2}}{2}  \phi^6 + \left( \frac{\varepsilon^{-2} }{2} + \frac{\eta}{2} \right) \phi^2 + \frac{\varepsilon^2}{2} (\Delta \phi)^2\right. + \left. A ( \phi^4 + | \nabla \phi |^4 ) + 3\phi^2 \left| \nabla \phi \right|^2  \right \} d{\bf x}  ,
	\label{convexity-3}   
	\end{equation}
and
	\begin{equation}
{\cal F}_e (\phi) := \int_\Omega  \left\{ \left( \epsilon^{-2} + \frac{\eta}{4} \right) \phi^4  + \left(  1 + \frac{\eta \varepsilon^2}{2} \right)  \left| \nabla \phi \right|^2  + A ( \phi^4 + | \nabla \phi |^4 ) \right\} d{\bf x} ,
	\label{convexity-4}  
	\end{equation} 
where both ${\cal F}_c, {\cal F}_e:H^2_{\rm per}(\Omega)\to\mathbb{R}$ are strictly convex provided $A\ge 1$.
	\end{cor}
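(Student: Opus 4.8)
The plan is to prove \eqref{eqn:convexity-eng} in two stages: first the purely algebraic splitting identity, then the strict convexity of ${\cal F}_c$ and of ${\cal F}_e$ separately. For the identity, I would simply subtract \eqref{convexity-4} from \eqref{convexity-3}: the two copies of $A\int_\Omega(\phi^4+|\nabla\phi|^4)\,d{\bf x}$ cancel, and the surviving integrand is $\frac{\varepsilon^{-2}}{2}\phi^6 - (\varepsilon^{-2}+\frac{\eta}{4})\phi^4 + (\frac{\varepsilon^{-2}}{2}+\frac{\eta}{2})\phi^2 + \frac{\varepsilon^2}{2}(\Delta\phi)^2 - (1+\frac{\eta\varepsilon^2}{2})|\nabla\phi|^2 + 3\phi^2|\nabla\phi|^2$, which reproduces the expansion \eqref{energy-FCH-1} of ${\cal F}(\phi)$ coefficient by coefficient (using $\|\phi\|_{L^6}^6=\int_\Omega\phi^6\,d{\bf x}$, $\|\phi\|_{L^4}^4=\int_\Omega\phi^4\,d{\bf x}$, and $\|\nabla\phi\|^2$, $\|\Delta\phi\|^2$ for the remaining quadratic terms); this requires nothing beyond matching coefficients. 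I would note in passing that all the integrals are finite on $H^2_{\rm per}(\Omega)$ by the embeddings $H^2_{\rm per}(\Omega)\hookrightarrow L^6(\Omega)$ and $H^1_{\rm per}(\Omega)\hookrightarrow L^4(\Omega)$ (valid for $D\le3$), which in particular give $H^2_{\rm per}(\Omega)\hookrightarrow W^{1,4}_{\rm per}(\Omega)$, so that Lemma~\ref{lem:Hconv} is available on $H^2_{\rm per}(\Omega)$.

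For ${\cal F}_c$ I would split into three groups. The group $\int_\Omega\bigl(A(\phi^4+|\nabla\phi|^4)+3\phi^2|\nabla\phi|^2\bigr)\,d{\bf x}$ is precisely ${\cal H}(\phi)$ of \eqref{convexity-H-1}, hence convex for $A\ge1$ by Lemma~\ref{lem:Hconv}. The term $\frac{\varepsilon^{-2}}{2}\int_\Omega\phi^6\,d{\bf x}$ is convex since $t\mapsto t^6$ is convex: integrating the pointwise midpoint inequality over $\Omega$ gives the functional midpoint inequality, which upgrades to convexity by the chord-rigidity of convex functions. The remaining group is the quadratic form $Q(\phi):=(\frac{\varepsilon^{-2}}{2}+\frac{\eta}{2})\|\phi\|^2+\frac{\varepsilon^2}{2}\|\Delta\phi\|^2$; from the polarization identity $Q(\lambda\phi_1+(1-\lambda)\phi_2)=\lambda Q(\phi_1)+(1-\lambda)Q(\phi_2)-\lambda(1-\lambda)Q(\phi_1-\phi_2)$ for $\lambda\in(0,1)$, strict convexity of $Q$ is equivalent to $Q(v)>0$ for all $v\ne0$, which holds because $Q(v)\ge(\frac{\varepsilon^{-2}}{2}+\frac{\eta}{2})\|v\|^2>0$ as soon as the mass coefficient is positive, i.e. $\varepsilon^{-2}+\eta>0$ (automatic for $\eta\ge0$, hence for the FCH cases $\eta=1$, $\eta=\varepsilon^{-1}$). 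A strictly convex functional plus convex functionals is strictly convex, so ${\cal F}_c$ is strictly convex.

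For ${\cal F}_e$, the terms $A\int_\Omega|\nabla\phi|^4\,d{\bf x}$, $(1+\frac{\eta\varepsilon^2}{2})\int_\Omega|\nabla\phi|^2\,d{\bf x}$, and $(\varepsilon^{-2}+\frac{\eta}{4})\int_\Omega\phi^4\,d{\bf x}$ are all convex, by pointwise convexity of ${\bf x}\mapsto|{\bf x}|^4$, ${\bf x}\mapsto|{\bf x}|^2$ and $t\mapsto t^4$ (the last two provided the coefficients $1+\frac{\eta\varepsilon^2}{2}$ and $\varepsilon^{-2}+\frac{\eta}{4}$ are nonnegative). The strictness is provided by $A\int_\Omega\phi^4\,d{\bf x}$: since $t\mapsto t^4$ satisfies the midpoint inequality \emph{strictly} whenever $t_1\ne t_2$, and any two distinct elements of $H^2_{\rm per}(\Omega)$ differ on a set of positive measure, integration yields a strict midpoint inequality, which with convexity upgrades to strict convexity. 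Adding convex functionals preserves this, so ${\cal F}_e$ is strictly convex; combined with the identity this proves the corollary.

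The step I expect to be the real obstacle is the strictness (not mere convexity), and in particular the observation that $\int_\Omega|\nabla\phi|^4\,d{\bf x}$ and $\int_\Omega(\Delta\phi)^2\,d{\bf x}$ are invariant under adding a constant to $\phi$ and are therefore \emph{not} strictly convex on $H^2_{\rm per}(\Omega)$ — so the strictness of ${\cal F}_c$ must be harvested from the $\phi^6$ term (or the $\|\phi\|^2$ part of $Q$) and that of ${\cal F}_e$ from the $\phi^4$ term, in each case through the ``pointwise strict $\Rightarrow$ integrated strict'' passage above. The remaining work is the parameter bookkeeping — the sign conditions $\varepsilon^{-2}+\eta>0$, $\varepsilon^{-2}+\frac{\eta}{4}\ge0$, $1+\frac{\eta\varepsilon^2}{2}\ge0$ under which the quadratic and gradient-energy terms are convex — which are part of the standing hypotheses and hold trivially when $\eta\ge0$.
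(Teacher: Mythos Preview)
Your proposal is correct and follows the approach the paper intends: the paper states this Corollary without proof, treating it as an immediate consequence of Lemma~\ref{lem:Hconv} (the convexity of $\mathcal{H}$), and your argument is exactly the natural way to fill in those details --- identify $\mathcal{H}$ inside $\mathcal{F}_c$, handle the remaining terms by pointwise convexity, and check the algebraic identity against the expansion~\eqref{energy-FCH-1}.

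You are in fact more careful than the paper on two points. First, you correctly isolate where \emph{strict} convexity must come from (the $\phi^6$ or $\|\phi\|^2$ term in $\mathcal{F}_c$, the $A\phi^4$ term in $\mathcal{F}_e$), observing that the gradient- and Laplacian-only pieces are constant-shift invariant and hence not strictly convex by themselves; the paper does not comment on this. Second, you flag the sign conditions on the coefficients $\varepsilon^{-2}+\eta$, $\varepsilon^{-2}+\tfrac{\eta}{4}$, and $1+\tfrac{\eta\varepsilon^2}{2}$; the paper tacitly works in the FCH regime $\eta>0$ (see, e.g., the later estimate~\eqref{H2-stab-1}), under which these are automatic, but does not make this explicit in the Corollary.
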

We recall the following proposition from \cite{wise09a}:
	\begin{prop}
	\label{splitting-ineq}
Suppose that $\phi,\, \psi \in H_{\rm per}^4(\Omega)$ and that ${\cal F}$ admits a (not necessarily unique) convex splitting into ${\cal F} = {\cal F}_c-{\cal F}_e$ then
	\begin{equation}
	\label{eqn:splitting-ineq}
{\cal F}(\phi)-{\cal F}(\psi) \le \left(\delta_\phi {\cal F}_c(\phi)-\delta_\phi {\cal F}_e(\psi), \phi-\psi \right).
	\end{equation}
If $\phi,\, \psi \in H_{\rm per}^2(\Omega)$ only, then \eqref{eqn:splitting-ineq} can be interpreted in the weak sense.
	\end{prop}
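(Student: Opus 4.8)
The plan is to reduce the statement to the elementary first-order characterization of convexity applied to each of the convex pieces $\mathcal{F}_c$ and $\mathcal{F}_e$ separately, and then to recombine. Concretely, the key sub-claim to establish is: if $\mathcal{E} : V \to \mathbb{R}$ is convex and Gâteaux differentiable on a space $V$ containing the segment $[\psi,\phi]$, then
\[
\mathcal{E}(\phi) - \mathcal{E}(\psi) \le \left(\delta_\phi \mathcal{E}(\phi), \phi - \psi\right) \qquad \text{and} \qquad \mathcal{E}(\phi) - \mathcal{E}(\psi) \ge \left(\delta_\phi \mathcal{E}(\psi), \phi - \psi\right).
\]
Once this is available, I would apply the first inequality to $\mathcal{E} = \mathcal{F}_c$ (convex for $A \ge 1$ by the Corollary) and the second to $\mathcal{E} = \mathcal{F}_e$, then subtract:
\[
\mathcal{F}(\phi) - \mathcal{F}(\psi) = \bigl(\mathcal{F}_c(\phi) - \mathcal{F}_c(\psi)\bigr) - \bigl(\mathcal{F}_e(\phi) - \mathcal{F}_e(\psi)\bigr) \le \left(\delta_\phi \mathcal{F}_c(\phi) - \delta_\phi \mathcal{F}_e(\psi), \phi - \psi\right),
\]
which is exactly \eqref{eqn:splitting-ineq}.

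To prove the sub-claim I would pass to one variable. Set $f(t) := \mathcal{E}\bigl(\psi + t(\phi - \psi)\bigr)$ for $t \in [0,1]$. Since $t \mapsto \psi + t(\phi - \psi)$ is affine and $\mathcal{E}$ is convex, $f$ is convex on $[0,1]$; moreover $f$ is differentiable with $f'(t) = \bigl(\delta_\phi \mathcal{E}(\psi + t(\phi - \psi)), \phi - \psi\bigr)$, obtained by differentiating under the integral sign in \eqref{convexity-3}–\eqref{convexity-4} and integrating by parts. Convexity of $f$ makes $f'$ non-decreasing, so $f(1) - f(0) = \int_0^1 f'(t)\,dt$ lies between $f'(0)$ and $f'(1)$; reading off $f'(1) = \bigl(\delta_\phi \mathcal{E}(\phi), \phi - \psi\bigr)$ and $f'(0) = \bigl(\delta_\phi \mathcal{E}(\psi), \phi - \psi\bigr)$ gives the two inequalities. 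For the case $\phi, \psi \in H^2_{\rm per}(\Omega)$ only, the $L^2$ pairings are replaced by the appropriate duality pairings (between $H^{-1}_{\rm per}$, resp. $(W^{1,4}_{\rm per})^*$, and $H^1_{\rm per}$, resp. $W^{1,4}_{\rm per}$), which is precisely the "weak sense" in the statement; the same affine-restriction argument applies verbatim with the chain rule interpreted in that dual setting.

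The main obstacle is not the convexity logic, which is routine, but the regularity bookkeeping needed to justify the differentiation of $f$ and the identification of $f'(t)$ with the stated pairing. The delicate terms are $\int_\Omega |\nabla\phi|^4\,d{\bf x}$ and $\int_\Omega 3\phi^2|\nabla\phi|^2\,d{\bf x}$, whose variational derivatives involve $\nabla\cdot\bigl(|\nabla\phi|^2 \nabla\phi\bigr)$ and $\nabla\cdot\bigl(\phi^2 \nabla\phi\bigr)$: for $\phi \in H^4_{\rm per}(\Omega)$ one has $\nabla\phi \in H^3 \hookrightarrow W^{1,\infty}$ in dimension $D \le 3$, so these lie in $L^2(\Omega)$ and pair with $\phi - \psi \in L^2$; for $\phi \in H^2_{\rm per}(\Omega)$ they still define bounded functionals on $W^{1,4}_{\rm per}(\Omega)$, so the pairing against $\phi - \psi \in W^{1,4}_{\rm per}(\Omega)$ is meaningful. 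Since the Corollary already records that $\mathcal{F}_c$ and $\mathcal{F}_e$ are strictly convex on $H^2_{\rm per}(\Omega)$ for $A \ge 1$, this regularity check is the only remaining point, after which the inequality follows from the one-dimensional argument above; indeed this is precisely the abstract statement proved in \cite{wise09a}.
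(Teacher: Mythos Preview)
Your argument is correct and is the standard one: the first-order characterization of convexity, applied separately to $\mathcal{F}_c$ and $\mathcal{F}_e$ via the one-variable restriction $f(t)=\mathcal{E}(\psi+t(\phi-\psi))$, yields the two inequalities that combine to give \eqref{eqn:splitting-ineq}. Note that the paper itself does not supply a proof of this proposition at all---it is simply recalled from \cite{wise09a}---so your write-up actually goes beyond what the paper provides, and your closing remark that this is precisely the abstract statement proved in \cite{wise09a} is exactly right.
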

	
\subsection{The first order convex splitting scheme}

Based on the convex-concave decomposition in \eqref{convexity-3} and \eqref{convexity-4} for the physical energy ${\cal F} (\phi)$, we consider the following semi-implicit, first-order-in-time, convex splitting  scheme:
	\begin{equation}
	\label{scheme-CS-Hm1}
\phi^{k+1}-\phi^k = s\nabla\cdot\left(M(\phi^k)\nabla\tilde\mu\right) ,\quad \tilde\mu\left(\phi^{k+1},\phi^k\right) := \delta_\phi {\cal F}_c(\phi^{k+1}) - \delta_\phi {\cal F}_e(\phi^k),
	\end{equation}
where, precisely,
	\begin{align}
\tilde\mu\left(\phi^{k+1},\phi^k\right) = & \   3 \varepsilon^{-2} ( \phi^{k+1} )^5  + 4 A ( \phi^{k+1} )^3  + ( \varepsilon^{-2}  + \eta ) \phi^{k+1}  + \varepsilon^2 \Delta^2 \phi^{k+1} 
	\nonumber 
	\\
&  + 6 \phi^{k+1} \left| \nabla \phi^{k+1} \right|^2 - 6 \nabla \cdot \left( ( \phi^{k+1} )^2 \nabla \phi^{k+1} \right)- 4 A \nabla \cdot \left( | \nabla \phi^{k+1} |^2 \nabla \phi^{k+1} \right) 
	\label{CS-Hm1e}   
	\\
&  - ( 4 \varepsilon^{-2} +  \eta ) ( \phi^k )^3 + ( 2 + \eta \varepsilon^2 )  \Delta \phi^k  - 4 A ( \phi^k )^3  + 4 A \nabla \cdot \left( | \nabla \phi^k |^2 
  \nabla \phi^k \right). 
  	\nonumber 
	\end{align}
The scheme may be expressed in a weak form as follows: find the pair $(\phi,\mu)\in H^2_{\rm per}(\Omega)\times H^1_{\rm per}(\Omega)$ such that
	\begin{align}
(\phi,\nu) + s(M \nabla\mu,\nabla\nu) & \, = (g,\nu),
	\\
 \left( 3 \varepsilon^{-2} \phi^5  + 4 A \phi^3  + ( \varepsilon^{-2}  + \eta ) \phi, \psi\right)  + \varepsilon^2 ( \Delta \phi,\Delta\psi) + 6 (\phi \left| \nabla \phi \right|^2,\psi) &
 	\\
+ 6 \left( \phi^2 \nabla \phi,\nabla\psi \right)+ 4 A  \left( | \nabla \phi |^2 \nabla \phi,\nabla\psi \right)  -(\mu,\psi) & \, = (f,\psi),
	\end{align}
where $g = \phi^k$, $M = M(\phi^k)$, and
	\[
f =  \delta_\phi\mathcal{F}_e(\phi^k) = ( 4 \varepsilon^{-2} +  \eta ) ( \phi^k )^3 - ( 2 + \eta \varepsilon^2 )  \Delta \phi^k  + 4 A ( \phi^k )^3  - 4 A \nabla \cdot \left( | \nabla \phi^k |^2   \nabla \phi^k \right).
	\]
Observe that, if $\phi^k\in H^2_{\rm per}(\Omega)$ is given, we have $g,f\in L^2_{\rm per}(\Omega) = L^2(\Omega)$.

	\begin{thm}
	\label{thm:eng-decay} 
The convex splitting scheme \eqref{scheme-CS-Hm1} is uniquely solvable and unconditionally energy stable: ${\cal F} (\phi^{k+1}) \le {\cal F} (\phi^k)$. In particular, if $\phi^k\in H^2_{\rm per}(\Omega)$, then $\phi^{k+1}\in H^2_{\rm per}(\Omega)$.
	\end{thm}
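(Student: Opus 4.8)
The statement is the standard pay-off of the convex splitting just built, so I would follow the by-now-routine argument of \cite{wise09a, diegel2015analysis}: the only genuinely nontrivial ingredient — an admissible convex--concave decomposition of $\mathcal{F}$ — is already supplied by Lemma~\ref{lem:Hconv} and the corollary following it. Unique solvability (together with the $H_{\rm per}^2$-regularity of $\phi^{k+1}$) I would obtain by realizing $\phi^{k+1}$ as the unique minimizer of a strictly convex, coercive functional whose Euler--Lagrange equation is precisely \eqref{scheme-CS-Hm1}; unconditional energy decay then follows from Proposition~\ref{splitting-ineq} upon testing the scheme against $\tilde\mu$.

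For solvability I would fix $\phi^k\in H_{\rm per}^2(\Omega)$. Since $D\le 3$, $H_{\rm per}^2(\Omega)\hookrightarrow L^\infty(\Omega)$, so $M(\phi^k)$ is bounded between two positive constants and the variable-coefficient analog $\mathsf{T}_M$ of the operator $\mathsf{T}$ from \eqref{defn-T-operator} — namely $\mathsf{T}_M=\mathcal{L}_M^{-1}$ with $\mathcal{L}_M w:=-\nabla\cdot(M(\phi^k)\nabla w)$ — is a well-defined isomorphism $\mathring{H}_{\rm per}^{-1}(\Omega)\to\mathring{H}_{\rm per}^{1}(\Omega)$; as in Lemma~\ref{lem-negative-norm}, set $\nrm{v}_{-1,M}^2:=\iprd{M(\phi^k)\nabla\mathsf{T}_M v}{\nabla\mathsf{T}_M v}\ge 0$. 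Integrating \eqref{scheme-CS-Hm1} over $\Omega$ forces $\phi^{k+1}$ to share the mean of $\phi^k$, so on $\mathcal{A}:=\{\phi\in H_{\rm per}^2(\Omega):\int_\Omega(\phi-\phi^k)\,d{\bf x}=0\}$ I would minimize
\begin{equation*}
J(\phi):=\frac{1}{2s}\,\nrm{\phi-\phi^k}_{-1,M}^2+\mathcal{F}_c(\phi)-\big\langle\delta_\phi\mathcal{F}_e(\phi^k),\phi\big\rangle .
\end{equation*}
By the corollary above $\mathcal{F}_c$ is strictly convex on $H_{\rm per}^2(\Omega)$ (this uses $A\ge1$), the first term is convex and the last is linear, so $J$ is strictly convex; the $\tfrac{\varepsilon^{2}}{2}(\Delta\phi)^2$ and $\tfrac{\varepsilon^{-2}}{2}\phi^6$ terms in $\mathcal{F}_c$ dominate and make $J$ coercive on $\mathcal{A}$, while $\delta_\phi\mathcal{F}_e(\phi^k)\in H_{\rm per}^{-1}(\Omega)$ — each summand, including $-4A\,\nabla\cdot(|\nabla\phi^k|^2\nabla\phi^k)$, lies there since $\nabla\phi^k\in H_{\rm per}^1(\Omega)\hookrightarrow L^6(\Omega)$ for $D\le3$ — so the linear term is subordinate. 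The direct method then yields a unique minimizer $\phi^{k+1}\in\mathcal{A}\subset H_{\rm per}^2(\Omega)$, which is exactly the regularity claim. Taking the first variation of $J$ along mean-zero $\chi\in H_{\rm per}^2(\Omega)$ gives $\tfrac1s\,(\mathsf{T}_M(\phi^{k+1}-\phi^k),\chi)+\langle\tilde\mu,\chi\rangle=0$, where $\tilde\mu:=\delta_\phi\mathcal{F}_c(\phi^{k+1})-\delta_\phi\mathcal{F}_e(\phi^k)$ is a bounded functional on $H_{\rm per}^2(\Omega)$; hence, defining $\tilde\mu$ as a function up to the usual additive constant (irrelevant, since only $\nabla\tilde\mu$ enters the flow), $\tilde\mu=-\tfrac1s\mathsf{T}_M(\phi^{k+1}-\phi^k)+\text{const}\in H_{\rm per}^1(\Omega)$, and applying $\mathcal{L}_M$ recovers \eqref{scheme-CS-Hm1} in the stated weak form. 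Conversely any solution is a critical point of the strictly convex $J$, hence its minimizer, so the solution is unique.

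For the energy inequality, since $\phi^k,\phi^{k+1}\in H_{\rm per}^2(\Omega)$, Proposition~\ref{splitting-ineq} in its weak form and the definition of $\tilde\mu$ give $\mathcal{F}(\phi^{k+1})-\mathcal{F}(\phi^k)\le\langle\tilde\mu,\phi^{k+1}-\phi^k\rangle$; testing the weak scheme \eqref{scheme-CS-Hm1} with $\nu=\tilde\mu\in H_{\rm per}^1(\Omega)$ and integrating by parts (no boundary contribution, by periodicity),
\begin{equation*}
\langle\tilde\mu,\phi^{k+1}-\phi^k\rangle=-s\,\iprd{M(\phi^k)\nabla\tilde\mu}{\nabla\tilde\mu}=-s\,\nrm{\sqrt{M(\phi^k)}\,\nabla\tilde\mu}^2\le 0
\end{equation*}
for every $s>0$, whence $\mathcal{F}(\phi^{k+1})\le\mathcal{F}(\phi^k)$ with no step-size restriction.

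The main obstacle is not in any of these steps, which are routine once a splitting is available; the real difficulty already lives in Lemma~\ref{lem:Hconv} — singling out the auxiliary convexifier $A(\phi^4+|\nabla\phi|^4)$ for the non-convex, non-concave term $3\phi^2|\nabla\phi|^2$. Within the present argument, the points that demand genuine care are the coercivity/subordination estimate for $J$ on $H_{\rm per}^2(\Omega)$ (handling the possibly negative coefficient of $\phi^2$ via the higher-order bulk terms) and the Gâteaux differentiability used in the Euler--Lagrange computation — in particular differentiating the quartic-gradient pieces $\int_\Omega\phi^2|\nabla\phi|^2\,d{\bf x}$ and $\int_\Omega|\nabla\phi|^4\,d{\bf x}$ of $\mathcal{F}_c$ along $H_{\rm per}^2$-directions, and checking that the cubic/quintic data carried by $\delta_\phi\mathcal{F}_e(\phi^k)$ are well defined — all of which rest on the embedding $H_{\rm per}^2(\Omega)\hookrightarrow W_{\rm per}^{1,4}(\Omega)\cap L^\infty(\Omega)$ valid for $D\le3$, together with $M(\phi^k)$ being bounded away from $0$ and $\infty$.
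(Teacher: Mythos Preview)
Your proposal is correct and follows essentially the same approach as the paper. The paper dispatches existence and unique solvability in one sentence (``follows from standard convexity analyses''), and your minimization argument for $J(\phi)=\tfrac{1}{2s}\nrm{\phi-\phi^k}_{-1,M}^2+\mathcal{F}_c(\phi)-\langle\delta_\phi\mathcal{F}_e(\phi^k),\phi\rangle$ on the mean-constrained slice of $H_{\rm per}^2(\Omega)$ is precisely the standard analysis being alluded to; the energy-decay step is identical to the paper's---apply Proposition~\ref{splitting-ineq} with $\phi=\phi^{k+1}$, $\psi=\phi^k$, then test the scheme against $\tilde\mu$ and integrate by parts to obtain $-s\iprd{M(\phi^k)\nabla\tilde\mu}{\nabla\tilde\mu}\le 0$.
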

	\begin{proof}
The existence and unique solvability follows from standard convexity analyses.  For the stability, let $\phi=\phi^{k+1}$ and $\psi=\phi^k$ in \eqref{eqn:splitting-ineq} to find
	\begin{eqnarray*}
{\cal F}(\phi^{k+1})-{\cal F}(\phi^k) &\le& \left(\delta_\phi {\cal F}_c(\phi^{k+1})-\delta_\phi {\cal F}_e(\phi^k), \phi^{k+1}-\phi^k \right)
	\nonumber
	\\
&=& s\left(\tilde\mu, \nabla\cdot\left(M(\phi^k)\nabla\tilde\mu\right) \right) 
= -s\left(\nabla\tilde\mu , M(\phi^k)\nabla\tilde\mu\right) \le 0,
	\end{eqnarray*} 
where we have interpreted the right-hand-side of \eqref{eqn:splitting-ineq} in the weak sense. 
	\end{proof}

	\subsection{Global-in-time $H_{\rm per}^2$ stability of the numerical scheme} 
	
For simplicity, we will take the mobility $M \equiv 1$ in the remainder of the  paper. 
	\begin{lem}
There are constants $C_0,C_1>0$ such that, for all $\phi\in H^2_{\rm per}(\Omega)$,
	\begin{equation}
\frac{\varepsilon^{-2} }{6}  \nrm{ \phi }_{L^6}^6 + C_0 \varepsilon^2 \nrm{ \phi }_{H_{\rm per}^2}^2 \le  {\cal F} (\phi)+C_1 .
	\label{energy-FCH-2}
	\end{equation}
	\end{lem}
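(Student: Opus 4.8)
The plan is to start from the expanded form \eqref{energy-FCH-1} of ${\cal F}(\phi)$ and absorb all the indefinite or negative-sign contributions into the manifestly coercive terms $\frac{\varepsilon^{-2}}{2}\nrm{\phi}_{L^6}^6$, $\frac{\varepsilon^2}{2}\nrm{\Delta\phi}^2$, and the nonnegative term $3\int_\Omega \phi^2|\nabla\phi|^2\,d{\bf x}$, at the cost of additive constants $C_1$ and a small fraction of the leading terms. Concretely, there are three ``bad'' terms to control: the quartic term $-(\varepsilon^{-2}+\frac{\eta}{4})\nrm{\phi}_{L^4}^4$ (bad only when the coefficient is positive), the low-order term $(\frac{\varepsilon^{-2}}{2}+\frac{\eta}{2})\nrm{\phi}^2$ (bad when $\eta<0$), and the gradient term $-(1+\frac{\eta\varepsilon^2}{2})\nrm{\nabla\phi}^2$ (bad when the coefficient is positive). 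Since we only need the inequality for some fixed constants depending on $\varepsilon$ and $\eta$, we may be generous with the estimates.

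First I would handle the $\nrm{\phi}_{L^4}^4$ term: by interpolation $\nrm{\phi}_{L^4}^4 \le \nrm{\phi}_{L^6}^3 \nrm{\phi}_{L^2}$ (H\"older on $\Omega$ bounded), or more simply by Young's inequality $|\phi|^4 \le \delta |\phi|^6 + C_\delta$ pointwise, integrating to get $\nrm{\phi}_{L^4}^4 \le \delta \nrm{\phi}_{L^6}^6 + C_\delta |\Omega|$. Choosing $\delta$ small relative to $\varepsilon^{-2}$ absorbs this into the sextic term with room to spare. Next, the $\nrm{\phi}^2$ term: since $\nrm{\phi}_{L^2}^2 \le |\Omega|^{1/3}\nrm{\phi}_{L^6}^2 \le \delta\nrm{\phi}_{L^6}^6 + C_\delta$ by another Young-type step (or just $\nrm{\phi}_{L^2}^2\le\delta\nrm{\phi}_{L^2}^6+C_\delta$ combined with $\nrm{\phi}_{L^2}\le C\nrm{\phi}_{L^6}$), it is likewise absorbed. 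For the negative gradient term, integration by parts gives $\nrm{\nabla\phi}^2 = -(\phi,\Delta\phi) \le \delta\nrm{\Delta\phi}^2 + C_\delta\nrm{\phi}^2$, and then the $\nrm{\phi}^2$ piece is handled as above while the $\delta\nrm{\Delta\phi}^2$ piece is absorbed into $\frac{\varepsilon^2}{2}\nrm{\Delta\phi}^2$ for $\delta$ small relative to $\varepsilon^2$. After these steps we obtain, for suitable $C_1>0$,
\[
{\cal F}(\phi) + C_1 \ge \frac{\varepsilon^{-2}}{6}\nrm{\phi}_{L^6}^6 + c\,\varepsilon^2\nrm{\Delta\phi}^2 + 3\int_\Omega \phi^2|\nabla\phi|^2\,d{\bf x}
\]
for some $c>0$ (I have deliberately kept only $\frac13$ of the sextic coefficient, matching the target, and discarded the last nonnegative integral if not needed).

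Finally I would upgrade $\nrm{\Delta\phi}^2$ to the full $H^2_{\rm per}$ norm. Using elliptic regularity on the periodic torus, $\nrm{\phi}_{H^2_{\rm per}}^2 \le C(\nrm{\Delta\phi}^2 + \nrm{\phi}^2)$ (the $\nrm{\nabla\phi}$ part is interpolated between these two, or handled directly via Fourier series $\sum(1+|k|^2)^2|\hat\phi_k|^2 \le C\sum(|k|^4+1)|\hat\phi_k|^2$). The residual $\nrm{\phi}^2$ is absorbed into the sextic term one last time as above. Collecting constants yields \eqref{energy-FCH-2} with $C_0 = c/C$ and an enlarged $C_1$. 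The only mild subtlety — the part I expect to require the most care — is bookkeeping the dependence on $\varepsilon$ and $\eta$ so that all the ``small $\delta$'' choices are simultaneously consistent (each $\delta$ must be small compared to the relevant coefficient, e.g. $\varepsilon^{-2}$ or $\varepsilon^2$, and there are finitely many such conditions), but since $\varepsilon,\eta$ are fixed this is purely a matter of ordering the absorption steps correctly; no genuine analytic obstacle arises.
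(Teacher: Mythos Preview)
Your proposal is correct and follows essentially the same route as the paper: absorb $\nrm{\phi}_{L^4}^4$ and $\nrm{\phi}^2$ into $\nrm{\phi}_{L^6}^6$ via H\"older/Young, handle $\nrm{\nabla\phi}^2$ by the integration-by-parts/Cauchy trick $\nrm{\nabla\phi}^2\le\nrm{\phi}\,\nrm{\Delta\phi}$, and finish with periodic elliptic regularity $\nrm{\phi}_{H^2}^2\le C(\nrm{\phi}^2+\nrm{\Delta\phi}^2)$. The only cosmetic difference is that the paper retains an explicit $\nrm{\phi}^2$ term (with coefficient $1$) in the intermediate inequality so that the elliptic-regularity step needs no further absorption, whereas you spend all of the sextic budget down to exactly $\tfrac{\varepsilon^{-2}}{6}$ and then propose to absorb one more $\nrm{\phi}^2$ at the end; to make that literally work you should hold back slightly more than $\tfrac{\varepsilon^{-2}}{6}$ at the intermediate stage (e.g.\ $\tfrac{\varepsilon^{-2}}{4}$) --- exactly the kind of bookkeeping adjustment you already flagged.
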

	\begin{proof}
For the concave diffusion term in \eqref{energy-FCH-1}, an application of Cauchy's inequality shows that 
	\begin{equation} 
\nrm{ \nabla \phi }^2 = \int_\Omega \phi \cdot \Delta \phi\,  d{\bf x} 
\le \nrm{ \phi } \cdot \nrm{ \Delta \phi } 
\le   \frac{\varepsilon^2}{4 ( 1 + \frac{\eta \varepsilon^2}{2} ) } \nrm{ \Delta \phi }^2 
+ \frac{1 + \frac{\eta \varepsilon^2}{2}}{\varepsilon^2} \nrm{ \phi }^2 ,  
\quad \forall \, \eta  > 0 .  \label{H2-stab-1}
	\end{equation}
Then we obtain  
	\begin{equation} 
( 1 + \frac{\eta \varepsilon^2}{2} ) \nrm{ \nabla \phi }^2  \le  \frac{\varepsilon^2}{4} \nrm{ \Delta \phi }^2 + C_2 \nrm{ \phi }^2 ,
	\label{H2-stab-2}
	\end{equation}
with $C_2 :=  ( 1 + \frac{\eta \varepsilon^2}{2} )^2 \varepsilon^{-2} =O(\varepsilon^{-2})$.  Applications of H\"older's inequality imply that 
	\begin{equation*}
\nrm{ \phi }_{L^6}  \ge  \frac{1}{ | \Omega |^{1/12} } \nrm{ \phi }_{L^4},\quad
\nrm{ \phi }_{L^6}  \ge  \frac{1}{ | \Omega |^{1/3} } \nrm{ \phi } .
	\end{equation*}
Now, define $ C_3 := C_2 - \left( \frac{\varepsilon^{-2} }{2} + \frac{\eta}{2} \right) 
+ 1 >0$; we note that $C_3=O (\varepsilon^{-2} )$. As a consequence of the last two inequalities, we get 
	\begin{eqnarray}   
\frac16 \nrm{ \phi }_{L^6}^6  &\ge&  \frac{1}{ 6 | \Omega |^{1/2} } \nrm{ \phi }_{L^4}^6 \ge ( 1 + \frac{\eta \varepsilon^2}{4} ) \nrm{ \phi }_{L^4}^4 - C_4  ,
	\label{H2-stab-4-1}
	\\
\frac16 \nrm{ \phi }_{L^6}^6  &\ge&  \frac{1}{ 6 | \Omega |^{2} } \nrm{ \phi } ^6  \ge  \varepsilon^2 C_3 \nrm{ \phi } ^2 - C_5  ,
	\label{H2-stab-4-2}
	\end{eqnarray}
for some constants $C_4,C_5>0$, which are of order 1, where Young's inequality was repeated applied. Therefore, a combination of \eqref{energy-FCH-1}, \eqref{H2-stab-2}, \eqref{H2-stab-4-1} and \eqref{H2-stab-4-2} yields 
	\begin{eqnarray}
{\cal F} (\phi) &\ge& \frac{\varepsilon^{-2} }{6}  \nrm{ \phi }_{L^6}^6 + \nrm{ \phi }^2 + \frac{\varepsilon^2}{4} \nrm{ \Delta \phi }^2 - C_1, 
	\nonumber 
	\\
&\ge&  \frac{\varepsilon^{-2} }{6}  \nrm{ \phi }_{L^6}^6 + C_0 \varepsilon^2 \nrm{ \phi }_{H_{\rm per}^2}^2 - C_1 ,
	\end{eqnarray}  
where $C_1:= \varepsilon^{-2} \left( C_4 + C_5 \right) =O(\varepsilon^{-2}$) and the elliptic regularity estimate $\nrm{ \phi }_{H^2}^2 \le C_0 ( \nrm{ \phi }^2  + \nrm{ \Delta \phi }^2 )$  was applied in the second step.
	\end{proof}

	\begin{cor}
Suppose that $\phi_0\in H^2_{\rm per}(\Omega)$. For any positive integer $k$, we have 
	\begin{eqnarray} 
\nrm{ \phi^k }_{H_{\rm per}^2} \le C_6 := \frac{{\cal F} (\phi^0) + C_1 }{C_0\varepsilon^2} .
  \label{H2-stab-5}
\end{eqnarray}
	\end{cor}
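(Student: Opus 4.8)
This is a direct consequence of the unconditional energy stability established in Theorem~\ref{thm:eng-decay} together with the coercivity bound \eqref{energy-FCH-2}. The plan is as follows.

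First I would invoke Theorem~\ref{thm:eng-decay} inductively: since $\phi^0\in H^2_{\rm per}(\Omega)$, the theorem guarantees $\phi^1\in H^2_{\rm per}(\Omega)$ and ${\cal F}(\phi^1)\le{\cal F}(\phi^0)$; repeating this argument, every iterate satisfies $\phi^k\in H^2_{\rm per}(\Omega)$ and, by telescoping the chain of inequalities,
	\begin{equation*}
{\cal F}(\phi^k)\le {\cal F}(\phi^{k-1})\le\cdots\le{\cal F}(\phi^0),\qquad\forall\,k\ge 1.
	\end{equation*}
In particular, the hypotheses of the preceding lemma are met for each $\phi^k$, so that the coercivity estimate \eqref{energy-FCH-2} may be applied at every time step.

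Next I would apply \eqref{energy-FCH-2} with $\phi=\phi^k$ and combine it with the energy decay:
	\begin{equation*}
\frac{\varepsilon^{-2}}{6}\nrm{\phi^k}_{L^6}^6 + C_0\varepsilon^2\nrm{\phi^k}_{H_{\rm per}^2}^2 \le {\cal F}(\phi^k)+C_1 \le {\cal F}(\phi^0)+C_1.
	\end{equation*}
Since the term $\tfrac{\varepsilon^{-2}}{6}\nrm{\phi^k}_{L^6}^6$ is nonnegative, it may be dropped from the left-hand side, yielding $C_0\varepsilon^2\nrm{\phi^k}_{H_{\rm per}^2}^2 \le {\cal F}(\phi^0)+C_1$, which after dividing by $C_0\varepsilon^2$ gives the claimed bound \eqref{H2-stab-5} (with the constant $C_6$ understood as controlling $\nrm{\phi^k}_{H_{\rm per}^2}^2$, up to the obvious square root).

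There is no real obstacle here; the only point that deserves a word of care is the inductive propagation of the $H^2_{\rm per}$ regularity of the iterates, which is exactly what the second assertion of Theorem~\ref{thm:eng-decay} supplies. The key feature worth emphasizing is that the bound $C_6$ depends only on the initial energy ${\cal F}(\phi^0)$ and on $\varepsilon$, and is \emph{independent of $k$ and of the time step size $s$}, which is precisely the global-in-time, unconditional $H^2_{\rm per}$ stability needed later for the convergence analysis.
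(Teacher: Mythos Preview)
Your proposal is correct and follows essentially the same route as the paper: invoke the energy decay ${\cal F}(\phi^k)\le{\cal F}(\phi^0)$ from Theorem~\ref{thm:eng-decay}, then plug into the coercivity estimate \eqref{energy-FCH-2} and drop the nonnegative $L^6$ term. Your remark that $C_6$ really bounds $\nrm{\phi^k}_{H^2_{\rm per}}^2$ (up to a square root) is a fair observation; the paper's proof is terse on this point but the argument is the same.
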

	\begin{proof}
The unconditional energy stability in Theorem~\ref{thm:eng-decay} implies that, for any positive integer $k$,
	\begin{equation} 
{\cal F} (\phi^k) \le {\cal F} (\phi^0). 
	\label{energy-stab-1}
	\end{equation}
A combination of \eqref{energy-FCH-2} and \eqref{energy-stab-1} yields the result. 
	\end{proof}

	\begin{rem}
Note that the constant $C_6$ is independent of $k$ and $s$, but does depends on $\varepsilon$. In particular, $C_6=O (\varepsilon^{-4})$.
	\end{rem}

\section{Convergence analysis}\label{sec:convergence} 
\subsection{Main result} 
The convergence result is stated in the following theorem. 
The following regularity classes are introduced: 

\begin{eqnarray} 
  &&
  \mathcal{R}_1 = C^2 ( [0,T]; C^0_{\rm per} (\Omega)) \cap C^1( [0,T]; C^4_{\rm per} (\Omega)) 
  \cap L^\infty ( [0,T]; C^6_{\rm per} (\Omega) ) , 
  \label{regularity assumption-1} 
\\
  &&
  \mathcal{R}_2 = C^2 ( [0,T]; C^0_{\rm per} (\Omega)) \cap C^1( [0,T]; C^4_{\rm per} (\Omega)) 
  \cap L^\infty ( [0,T]; C^8_{\rm per} (\Omega) ) . 
  \label{regularity assumption-2}    
\end{eqnarray}    


	\begin{thm} \label{convergence}
Let $\Phi \in \mathcal{R}_1$ be the exact periodic solution of the FCH equation \eqref{equation-FCH-Hm1} with the initial data $\Phi(0) =\phi_0\in H^2_{\rm per}(\Omega)$. Suppose $\phi$ is the space-continuous  numerical solution of \eqref{scheme-CS-Hm1}.   Then the following error estimate is valid:
	\begin{eqnarray} 
\| \Phi - \phi \|_{\ell^\infty(0,T; \mathring{H}_{\rm per}^{-1})} +  \| \Phi - \phi \|_{\ell^2 (0,T; H_{\rm per}^2)} \le C s ,  
\label{convergence-1}
	\end{eqnarray}     
where the constant $C>0$ depends only on the regularity of the exact solution. 
	\end{thm}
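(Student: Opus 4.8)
The plan is to carry out a standard consistency--stability--convergence argument for the convex splitting scheme, working in the $\mathring{H}_{\rm per}^{-1}$ framework that is natural for this $H^{-1}$ gradient flow. First I would define the profile $\Phi^k := \Phi(t_k)$ and write down the truncation error: plugging the exact solution into the scheme \eqref{scheme-CS-Hm1} produces a residual $\tau^k$ with $\| \tau^k \| \le C s$, where the $O(s)$ bound comes from Taylor expansion in time (the backward Euler discretization of $\partial_t \Phi$) together with the fact that the explicit (concave) terms $\delta_\phi {\cal F}_e$ are evaluated at $\phi^k$ rather than $\phi^{k+1}$, which also contributes only $O(s)$; here the $\mathcal{R}_1$ regularity (in particular $C^2$ in time with values in $C^0_{\rm per}$, and $L^\infty$ in time with values in $C^6_{\rm per}$) is exactly what is needed to control these terms uniformly. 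Since the scheme is conservative and $\int_\Omega \tau^k\, d{\bf x} = 0$, the error $e^k := \Phi^k - \phi^k$ lies in $\mathring{H}_{\rm per}^{-1}(\Omega)$ for all $k$ (the initial data agree, so $e^0 = 0$), and we may apply the operator $\mathsf{T}$ and test against $e^{k+1}$.

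The core step is the energy estimate for the error equation. Subtracting the scheme from the (exact, time-discretized) equation and testing with $\mathsf{T}(e^{k+1})$ in the $\mathring{H}_{\rm per}^{-1}$ inner product of Lemma~\ref{lem-negative-norm} gives, after the usual identity $\left(e^{k+1}-e^k, e^{k+1}\right)_{\mathring{H}_{\rm per}^{-1}} = \tfrac12\left(\|e^{k+1}\|_{\mathring{H}^{-1}_{\rm per}}^2 - \|e^k\|_{\mathring{H}^{-1}_{\rm per}}^2 + \|e^{k+1}-e^k\|_{\mathring{H}^{-1}_{\rm per}}^2\right)$, an expression of the form
	\begin{equation*}
\tfrac12\left(\|e^{k+1}\|_{\mathring{H}^{-1}_{\rm per}}^2 - \|e^k\|_{\mathring{H}^{-1}_{\rm per}}^2\right) + s\left(\delta_\phi{\cal F}_c(\Phi^{k+1}) - \delta_\phi{\cal F}_c(\phi^{k+1}),\, e^{k+1}\right) \le s\left(\delta_\phi{\cal F}_e(\Phi^k) - \delta_\phi{\cal F}_e(\phi^k),\, e^{k+1}\right) + s\left(\tau^k, \mathsf{T}(e^{k+1})\right).
	\end{equation*}
The convexity of ${\cal F}_c$ (the Corollary after Lemma~\ref{lem:Hconv}, for $A \ge 1$) makes the second term on the left nonnegative, and in fact its strict convexity, combined with the coercive $\tfrac{\varepsilon^2}{2}\|\Delta\cdot\|^2$ and the monotone $\phi^5$, $\phi^3$ contributions, should yield a term bounding $c\, s\, \|e^{k+1}\|_{H^2_{\rm per}}^2$ from below after absorbing lower-order pieces; this is where the $\ell^2(0,T;H^2_{\rm per})$ part of the estimate \eqref{convergence-1} comes from. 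The right-hand side is handled by Cauchy--Schwarz and the Poincaré-type bound $\|\tau^k\|_{\mathring{H}^{-1}_{\rm per}} \le C\|\tau^k\| \le Cs$, together with $\left(\tau^k,\mathsf{T}(e^{k+1})\right) \le \|\tau^k\|_{\mathring{H}^{-1}_{\rm per}}\|e^{k+1}\|_{\mathring{H}^{-1}_{\rm per}}$; the nonlinear difference terms $\delta_\phi{\cal F}_c(\Phi^{k+1}) - \delta_\phi{\cal F}_c(\phi^{k+1})$ and the explicit analogue are expanded and each monomial difference is estimated by inserting $e$, using the a priori $H^2_{\rm per}$ bounds on both $\Phi$ (from $\mathcal{R}_1$) and $\phi^k$ (from the Corollary giving $\|\phi^k\|_{H^2_{\rm per}} \le C_6$), Sobolev embedding $H^2_{\rm per}(\Omega) \hookrightarrow L^\infty(\Omega) \cap W^{1,p}$, and interpolation to trade an $\|e^{k+1}\|_{H^2_{\rm per}}$ factor (absorbed into the coercive left side via Young's inequality) against an $\|e^{k+1}\|_{\mathring{H}^{-1}_{\rm per}}$ factor and/or $\|e^k\|_{\mathring{H}^{-1}_{\rm per}}$. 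Summing over $k$ from $0$ to $n-1$, using $e^0 = 0$, and applying the discrete Gronwall inequality then delivers \eqref{convergence-1}.

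The main obstacle I expect is the treatment of the gradient-nonlinear terms, namely the differences arising from $6\phi|\nabla\phi|^2 - 6\nabla\cdot(\phi^2\nabla\phi)$ and the auxiliary $4A$-terms $4A\nabla\cdot(|\nabla\phi|^2\nabla\phi)$, when tested against $\mathsf{T}(e^{k+1})$ in the weak ($H^{-1}$) pairing. These are cubic in first derivatives, so controlling, e.g., $\left(|\nabla\Phi^{k+1}|^2\nabla\Phi^{k+1} - |\nabla\phi^{k+1}|^2\nabla\phi^{k+1}, \nabla\mathsf{T}(e^{k+1})\right)$ requires writing the difference as a sum of terms each containing a factor of $\nabla e^{k+1}$ or $\nabla e^k$ (noting $\nabla\mathsf{T}(e^{k+1})$ is merely in $L^2$, so one pairs $\nabla e$ with it and keeps the remaining factors in $L^\infty$ via the $H^2_{\rm per}$ bounds and Sobolev embedding), then absorbing $\|\nabla e^{k+1}\|^2 \le C\|e^{k+1}\|_{H^2_{\rm per}}^2$ into the coercive $\tfrac{\varepsilon^2}{2}\|\Delta e^{k+1}\|^2$-type term from the left-hand side and bounding the $\|\nabla e^k\|$ factor by interpolation ($\|\nabla e^k\| \le C\|e^k\|_{\mathring{H}^{-1}_{\rm per}}^{1/2}\|e^k\|_{H^2_{\rm per}}^{1/2}$ or similar) so that its contribution is controlled by previously summed $\ell^2(H^2_{\rm per})$ norms and $\ell^\infty(\mathring{H}^{-1}_{\rm per})$ norms fed into Gronwall. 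Keeping all these absorptions consistent — making sure the coercivity constant $c$ is large enough to swallow every $H^2_{\rm per}$ contribution from the nonlinear remainders with room to spare, and that the $\varepsilon$-dependent constants (which are explicit but large, $C_6 = O(\varepsilon^{-4})$) are tracked into the final $C$ — is the delicate bookkeeping part, but it is routine given the uniform-in-time $H^2_{\rm per}$ stability already established.
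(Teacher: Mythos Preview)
Your overall strategy is the paper's: test the error equation by $(-\Delta)^{-1}e^{k+1}=\mathsf{T}(e^{k+1})$, extract the coercive contribution $\varepsilon^2 s\|\Delta e^{k+1}\|^2+(\varepsilon^{-2}+\eta)s\|e^{k+1}\|^2$ from the implicit part, bound the remaining terms by interpolation between $\mathring{H}^{-1}_{\rm per}$ and $H^2_{\rm per}$ using the uniform-in-time $H^2_{\rm per}$ bound on both $\Phi$ and $\phi^k$, and close with discrete Gronwall. Your idea of invoking the convexity of $\mathcal{F}_c$ directly to obtain $(\delta_\phi\mathcal{F}_c(\Phi^{k+1})-\delta_\phi\mathcal{F}_c(\phi^{k+1}),e^{k+1})\ge \varepsilon^2\|\Delta e^{k+1}\|^2+(\varepsilon^{-2}+\eta)\|e^{k+1}\|^2$ is in fact cleaner than what the paper does: the paper expands every implicit nonlinear term separately, observes that several pieces (its $I_1,I_2,I_5,I_{7,2},I_{8,2}$) are individually sign-definite, but must still estimate the indefinite cross terms $I_6,I_{7,1},I_{8,1}$ by hand; your monotonicity argument handles all of these at once.

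A few details need fixing. First, after the two integrations by parts trading $\Delta(\cdot)$ against $\mathsf{T}(e^{k+1})$, the divergence-form nonlinearities pair with $\nabla e^{k+1}$, \emph{not} with $\nabla\mathsf{T}(e^{k+1})$; the explicit $4A$-term reads $4A\bigl(|\nabla\Phi^k|^2\nabla\Phi^k-|\nabla\phi^k|^2\nabla\phi^k,\nabla e^{k+1}\bigr)$. Second, in three dimensions $H^2_{\rm per}\hookrightarrow W^{1,6}$ but not $W^{1,\infty}$, so you cannot place the $\nabla\Phi,\nabla\phi$ factors in $L^\infty$; the paper instead uses H\"older with exponents $6,6,6,2$, the embedding $\|\nabla e\|_{L^6}\le C\|\Delta e\|$, and the interpolation $\|\nabla e\|\le C\|e\|_{\mathring{H}^{-1}_{\rm per}}^{1/3}\|\Delta e\|^{2/3}$ (the exponents are $1/3,2/3$, not $1/2,1/2$). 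This estimate leaves on the right a term $\tfrac{\varepsilon^2}{8}s\|\Delta e^k\|^2$ at the \emph{old} time level, which is absorbed upon summation only because its coefficient is strictly smaller than that of $\varepsilon^2 s\|\Delta e^{k+1}\|^2$ on the left; make this telescoping explicit rather than appealing vaguely to ``previously summed $\ell^2(H^2_{\rm per})$ norms.''
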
 

\subsection{Proof of the main result}
\subsubsection{Consistency analysis} 

Define $\Phi^k = \Phi(\, \cdot \, ,t_k)$. A detailed Taylor expansion implies the following truncation error: 
\begin{eqnarray} 
\frac{\Phi^{k+1} - \Phi^k}{s} &=&  \Delta \biggl( 3 \varepsilon^{-2} ( \Phi^{k+1} )^5 
  - ( 4 \varepsilon^{-2} +  \eta ) ( \Phi^k )^3  
  + ( \varepsilon^{-2}  + \eta ) \Phi^{k+1}   
  + \varepsilon^2 \Delta^2 \Phi^{k+1} 
  \nonumber 
\\
&& + ( 2 + \eta \varepsilon^2 )  \Delta \Phi^k  + 6 \Phi^{k+1} \left| \nabla \Phi^{k+1} \right|^2  
   - 6 \nabla \cdot \left( ( \Phi^{k+1} )^2 \nabla \Phi^{k+1} \right)  \nonumber
\\
&& + 4 A ( \Phi^{k+1} )^3  - 4 A \nabla \cdot \left( | \nabla \Phi^{k+1} |^2 \nabla \Phi^{k+1} \right)   
\label{consistency-1} 
\\
&& - 4 A ( \Phi^k )^3  + 4 A \nabla \cdot \left( | \nabla \Phi^k |^2 \nabla \Phi^k \right)   \biggr)  + \tau^k ,  \nonumber
\end{eqnarray}
with $\nrm{ \tau^k } \le C s$ . Consequently, with an introduction of the error function
\begin{eqnarray} 
  e^k = \Phi^k - \phi^k ,   \quad \forall \, k \ge 0 , 
  \label{error function-1} 
\end{eqnarray}
we get the following evolutionary equation, by subtracting \eqref{CS-Hm1e} from \eqref{consistency-1}:  
\begin{eqnarray} 
 \frac{e^{k+1} - e^k}{s}
    &=&  \Delta \biggl( 
  3 \varepsilon^{-2} \left( ( \Phi^{k+1} )^4  + ( \Phi^{k+1} )^3 \phi^{k+1} 
  + ( \Phi^{k+1} )^2 (\phi^{k+1})^2 +  \Phi^{k+1}  (\phi^{k+1} )^3 
   + ( \phi^{k+1} )^4  \right) e^{k+1}  \nonumber 
\\
  &&   
  - ( 4 \varepsilon^{-2} +  \eta + 4 A )  \left( ( \Phi^k )^2  
  + \Phi^k  \phi^k + ( \phi^k )^2  \right) e^k   
  + ( \varepsilon^{-2}  + \eta )  e^{k+1}    
  + \varepsilon^2 \Delta^2 e^{k+1}  \nonumber 
\\
  &&  
  + ( 2 + \eta \varepsilon^2 )  \Delta e^k
  + 6 e^{k+1} \left| \nabla \Phi^{k+1} \right|^2  
  + 6 \phi^{k+1} \left( \nabla ( \Phi^{k+1} + \phi^{k+1} )  
   \cdot \nabla e^{k+1}  \right)    \nonumber 
\\
  &&   
  - 6 \nabla \cdot \left( 
  ( \Phi^{k+1} + \phi^{k+1} ) e^{k+1} \nabla \Phi^{k+1}
  + ( \phi^{k+1} )^2 \nabla e^{k+1} \right)  \nonumber 
\\
  &&  
  + 4 A ( \left( ( \Phi^{k+1} )^2  
  + \Phi^{k+1}  \phi^{k+1} + ( \phi^{k+1} )^2  \right) 
  e^{k+1}   \nonumber 
\\
  &&   
- 4 A \nabla \cdot \left(  ( \nabla ( \Phi^{k+1} + \phi^{k+1} )  
   \cdot \nabla e^{k+1} )  \nabla \Phi^{k+1} 
  + | \nabla \phi^{k+1} |^2 \nabla e^{k+1}  \right)  \nonumber 
\\
  &&   
   + 4 A \nabla \cdot \left( ( \nabla ( \Phi^k + \phi^k )  
   \cdot \nabla e^k )  \nabla \Phi^k 
  + | \nabla \phi^k |^2 \nabla e^k \right)   \biggr)  
   + \tau^k  .   \label{consistency-2} 
\end{eqnarray}

In addition, from the PDE analysis for the FCH equation and the global in time $H_{\rm per}^2$ stability \eqref{H2-stab-5} for the numerical solution,  we also get the $L^\infty$, $W^{1,6}$ and $H_{\rm per}^2$ bounds for both the exact solution and numerical solution, uniform in time: 
	\begin{equation} 
\| \Phi^k \|_{L^\infty } ,  \  \| \Phi^k \|_{W^{1,6} }  , \  \| \Phi^k \|_{H_{\rm per}^2 }  \le C_7 ,  \quad  \| \phi^k \|_{L^\infty } ,  \  \| \phi^k \|_{W^{1,6} }  , \  \| \phi^k \|_{H_{\rm per}^2 }  \le C_7 ,  \quad  \forall \, k \ge 0 ,
	\label{consistency-bound-1}
	\end{equation}
where the 3-D embeddings of $H_{\rm per}^2$ into $L^\infty$ and into $W^{1, 6}$ have been applied. Also note that $C_7$ and $C_8$ are time independent constants, that depend on $\varepsilon$ as $O (\varepsilon^{-4})$. 

\subsubsection{Stability and convergence analysis} 

First, we recall that the exact solution to the FCH equation \eqref{equation-FCH-Hm1} is mass conservative: 
\begin{equation} 
  \int_\Omega \Phi ({\bf x} , t)  \, d {\bf x} \equiv \int_\Omega \Phi ({\bf x}  , 0)  \, d {\bf x} ,  \quad \forall t > 0 .   \nonumber
  \label{mass-conserv-1}
\end{equation} 
On the other hand, the numerical solution \eqref{scheme-CS-Hm1} is also mass conservative. In turn, we conclude that the numerical error function $e^k\in  \mathring{H}_{\rm per}^{2}(\Omega)$: 
\begin{equation} 
    \overline{e^k} := \int_\Omega e^k  \, d  {\bf x}
    = \int_\Omega e^0 = 0 , \quad \mbox{since $e^0 \equiv 0$} .\nonumber  \label{error-mean-1} 
\end{equation} 
Consequently, we define $\psi^k := (- \Delta)^{-1} e^k\in \mathring{H}_{\rm per}^{-1}(\Omega)$ as 
\begin{equation} 
  - \Delta  \psi^k  =  e^k ,  \quad \mbox{with} 
   \, \, \, \int_\Omega \psi^k  \, d {\bf x} = 0 .  \nonumber \label{varphi-1} 
	\end{equation} 
Define $I_i, i=1,\cdots, 10$ by
\begin{eqnarray}
I_1: &=& - 6 \varepsilon^{-2}   s \int_\Omega  \Bigl( ( \Phi^{k+1} )^4  + ( \Phi^{k+1} )^3 \phi^{k+1} 
  + ( \Phi^{k+1} )^2 (\phi^{k+1})^2 \nonumber\\ 
&&+  \Phi^{k+1}  (\phi^{k+1} )^3
+ ( \phi^{k+1} )^4  \Bigr) \left| e^{k+1}  \right|^2  d {\bf x},\nonumber
\\
I_2: &=& - 8 A s \int_\Omega \left( ( \Phi^{k+1} )^2  
+ \Phi^{k+1}  \phi^{k+1} + ( \phi^{k+1} )^2  \right) 
\left| e^{k+1}  \right|^2  d {\bf x},\nonumber
\\
I_3: &=& 2 ( 2 + \eta \varepsilon^2 )  s ( \nabla e^k , \nabla e^{k+1}  ),\nonumber
\\
I_4: &=& 2 ( 4 \varepsilon^{-2} +  \eta + 4 A )  s \int_\Omega 
\left( ( \Phi^k )^2  + \Phi^k  \phi^k + ( \phi^k )^2  \right) 
e^k  e^{k+1}   d {\bf x},\nonumber
\\
I_5: &=& - 12 s  \int_\Omega  | \nabla \Phi^{k+1} |^2  ( e^{k+1} )^2  d {\bf x} ,\nonumber
\\
I_6: &=& - 12 s  \int_\Omega \phi^{k+1} \left( \nabla ( \Phi^{k+1} + \phi^{k+1} )  
\cdot \nabla e^{k+1}  \right)   
e^{k+1}  d {\bf x},\nonumber
\\
I_7: &=& - 12 s  \left(( \Phi^{k+1} + \phi^{k+1} ) e^{k+1} \nabla \Phi^{k+1}
+ ( \phi^{k+1} )^2 \nabla e^{k+1} ,  
\nabla e^{k+1}  \right),\nonumber
\\
I_8: &=& - 8 A s  \left(  ( \nabla ( \Phi^{k+1} + \phi^{k+1} )\cdot \nabla e^{k+1} )  \nabla \Phi^{k+1} 
+ | \nabla \phi^{k+1} |^2 \nabla e^{k+1},
\nabla e^{k+1} \right),\nonumber
\\
I_9: &=& 8 A s  \left( ( \nabla ( \Phi^k + \phi^k )\cdot \nabla e^k )  \nabla \Phi^k 
+ | \nabla \phi^k |^2 \nabla e^k , 
\nabla e^{k+1}   \right),\nonumber
\\
I_{10}: &=& - 2 s ( \tau^k , e^{k+1} ).\nonumber
\end{eqnarray}
Therefore, taking an $L^2$ inner product with the numerical error equation \eqref{consistency-2} by $2 \psi^k$ gives 
\begin{eqnarray}  
\| e^{k+1} \|_{\mathring{H}_{\rm per}^{-1} }^2 
- \| e^k \|_{\mathring{H}_{\rm per}^{-1} }^2 
+ \| e^{k+1} - e^k \|_{\mathring{H}_{\rm per}^{-1} }^2   
+  2 ( \varepsilon^{-2}  + \eta )  s \| e^{k+1} \|^2 
+ 2 \varepsilon^2 s \| \Delta e^{k+1}  \|^2
= \sum_{i=1}^{10}I_i
,   \label{convergence-2} 
\end{eqnarray}
where integration-by-parts has been repeatedly applied. 

The local truncation error term $I_{10}$ can be bounded by the Cauchy inequality: 
\begin{eqnarray} 
    - 2 ( \tau^k , e^{k+1} )  
  \le  2  \| \tau^k \| \cdot  \| e^{k+1} \| 
  \le   \| \tau^k \|^2 + \| e^{k+1} \|^2 .
  \label{convergence-3-1} 
\end{eqnarray} 
Meanwhile, an application of weighted Sobolev inequality shows that 
	\begin{equation}
\| e^{k+1} \|   \le C_8 \| e^{k+1} \|_{\mathring{H}_{\rm per}^{-1} }^{2/3} \cdot  \| e^{k+1} \|_{\mathring{H}_{\rm per}^2 }^{1/3} \le C_9 \| e^{k+1} \|_{\mathring{H}_{\rm per}^{-1} }^{2/3} \cdot  \| \Delta e^{k+1} \|^{1/3}   ,
	\label{convergence-3-2}
	\end{equation}
where a standard estimate of elliptic regularity was applied at the second step, considering the fact that $\overline{ e^{k+1}} =0$. Subsequently, an application of Young's inequality gives 
\begin{eqnarray} 
   \| e^{k+1} \|^2   
  \le C_{10} \varepsilon^{-1} \| e^{k+1} \|_{\mathring{H}_{\rm per}^{-1} }^2  
  + \frac{\varepsilon^2}{8} \| \Delta e^{k+1} \|^2  , \nonumber   
  \label{convergence-3-3} 
\end{eqnarray} 
and its combination with (\ref{convergence-3-1}) yields 
\begin{eqnarray} 
    - 2 ( \tau^k , e^{k+1} )  
  \le   \| \tau^k \|^2 
   + C_{10} \varepsilon^{-1} \| e^{k+1} \|_{\mathring{H}_{\rm per}^{-1} }^2  
  + \frac{\varepsilon^2}{8} \| \Delta e^{k+1} \|^2  . 
  \label{convergence-3-4} 
\end{eqnarray} 

The first integral term $I_{1}$ turns out to be non-positive,
	\begin{equation} 
  I_{1} \le 0 ,  \label{convergence-4-1} 
	\end{equation} 
due to the fact that 
\begin{eqnarray} 
   ( \Phi^{k+1} )^4  + ( \Phi^{k+1} )^3 \phi^{k+1} 
  + ( \Phi^{k+1} )^2 (\phi^{k+1})^2 +  \Phi^{k+1}  (\phi^{k+1} )^3   
  + ( \phi^{k+1} )^4  \ge 0 . \nonumber \label{convergence-4-2} 
\end{eqnarray} 

Since $( \Phi^{k+1} )^2  + \Phi^{k+1}  \phi^{k+1} + ( \phi^{k+1} )^2 \ge 0$, similar estimates can be derived for $I_{2}$ and $I_{5}$: 
\begin{eqnarray} 
  I_{2} &=& - 8 A s \int_\Omega  
   \left( ( \Phi^{k+1} )^2  
  + \Phi^{k+1}  \phi^{k+1} + ( \phi^{k+1} )^2  \right) 
   \left| e^{k+1}  \right|^2  d {\bf x} \le 0 ,   \label{convergence-5-1} 
\\
  I_{5} &=& 
  - 12 s  \int_\Omega  | \nabla \Phi^{k+1} |^2   
( e^{k+1} )^2  d {\bf x}  \le 0 .  \label{convergence-5-2} 
\end{eqnarray}

  For the term $I_{3}$, we denote $C_{11} = 2 + \eta \varepsilon^2$ and observe that 
\begin{eqnarray} 
 I_{3} = 2  C_{11}  s ( \nabla e^k , \nabla e^{k+1}  )  
  \le  C_{11}  s  (  \| \nabla e^k \|^2  +  \| \nabla e^{k+1} \|^2  ) . 
  \label{convergence-6-1} 
\end{eqnarray} 
Meanwhile, a similar estimate as \eqref{convergence-3-2} could be carried out to bound $\| \nabla e^{k+1} \|$:  
\begin{eqnarray} 
   \| \nabla e^{k+1} \|  
  \le C_{12} \| e^{k+1} \|_{\mathring{H}_{\rm per}^{-1} }^{1/3} \cdot 
  \| e^{k+1} \|_{H_{\rm per}^2 }^{2/3}
  \le C_{13} \| e^{k+1} \|_{\mathring{H}_{\rm per}^{-1} }^{1/3} \cdot 
  \| \Delta e^{k+1} \|^{2/3}   ,  
  \label{convergence-6-2} 
\end{eqnarray} 
so that an application of Young's inequality leads to 
\begin{eqnarray} 
   \| \nabla e^{k+1} \|^2   
  \le C_{14} \varepsilon^{-4} \| e^{k+1} \|_{\mathring{H}_{\rm per}^{-1} }^2  
  + \frac{\varepsilon^2}{8 C_{11} } \| \Delta e^{k+1} \|^2  .    
  \label{convergence-6-3} 
\end{eqnarray} 
The term $\| \nabla e^k \|$ can be bounded in the same fashion:  
\begin{equation} 
   \| \nabla e^k \|^2   
  \le C_{15} \varepsilon^{-4} \| e^k \|_{\mathring{H}_{\rm per}^{-1} }^2  
  + \frac{\varepsilon^2}{8 C_{11}} \| \Delta e^k \|^2  .    
  \label{convergence-6-4} 
\end{equation} 
Substituting \eqref{convergence-6-3} and \eqref{convergence-6-4} into  \eqref{convergence-6-1}, we get 
\begin{equation} 
 I_{3} 
  \le  C_{16} s 
  ( \| e^{k+1} \|_{\mathring{H}_{\rm per}^{-1} }^2  
   + \| e^k \|_{\mathring{H}_{\rm per}^{-1} }^2  ) 
  + \frac{\varepsilon^2}{8} s (  \| \Delta e^{k+1} \|^2  +  \| \Delta e^k \|^2  ) . 
  \label{convergence-6-5} 
\end{equation} 

For the term $I_{4}$, we denote $C_{17} = 
4 \varepsilon^{-2} +  \eta + 4 A$. By the $L^\infty$ 
bound in \eqref{consistency-bound-1} for both the exact and numerical solutions, we see that 
\begin{equation} 
   \| ( \Phi^k )^2  + \Phi^k  \phi^k + ( \phi^k )^2 \|_{L^\infty }  
  \le  3 C_7^2 .  \label{convergence-7-1}
\end{equation}
This in turn implies that 
\begin{eqnarray} 
   I_{4} &\le&    2  C_{17}  s   
   \| ( \Phi^k )^2  + \Phi^k  \phi^k + ( \phi^k )^2 \|_{L^\infty }  
   \cdot  \| e^k \|  \cdot  \| e^{k+1}  \|   
  \nonumber  
\\
  &\le& 
   6  C_{17}  C_7^2 s  \| e^k \|  \cdot  \| e^{k+1}  \|  
  \le  3  C_{17}  C_7^2 s   (  \| e^k \|^2 +  \| e^{k+1}  \|^2 ) .  
\label{convergence-7-2} 
\end{eqnarray} 
Meanwhile, the estimate \eqref{convergence-3-3} can be performed with  alternate coefficients, so that the following inequalities are available: 
\begin{eqnarray} 
   \| e^j \|^2   
  &\le&  C_{18} \| e^j \|_{\mathring{H}_{\rm per}^{-1} }^2  
  + \frac{\varepsilon^2}{24  C_{17}  C_7^2 }  
  \| \Delta e^j \|^2  ,   \quad \mbox{for $j=k, k+1$} . 
  \label{convergence-7-3} 
\end{eqnarray} 
Subsequently, its combination with \eqref{convergence-7-2} yields 
\begin{eqnarray} 
   I_{4}   \le  C_{19}  s   
   (  \| e^k \|_{\mathring{H}_{\rm per}^{-1} }^2  
  +  \| e^{k+1} \|_{\mathring{H}_{\rm per}^{-1}  }^2  )   
  + \frac{\varepsilon^2}{8} s (  \| \Delta e^{k+1} \|^2  
  +  \| \Delta e^k \|^2  )   . 
\label{convergence-7-5} 
\end{eqnarray}

For the term $I_{6}$, we start from an application of H\"older inequality: 
\begin{eqnarray} 
  I_{6} &=& - 12 s  \int_\Omega 
  \phi^{k+1} \left( \nabla ( \Phi^{k+1} + \phi^{k+1} )  
   \cdot \nabla e^{k+1}  \right)   e^{k+1}  d {\bf x}   \nonumber 
\\
  &\le& 
  C_{20} s \| \phi^{k+1} \|_{L^\infty}  
  \cdot (  \| \nabla \Phi^{k+1} \|_{L^6}  
  + \| \nabla \phi^{k+1} \|_{L^6}  )   
   \cdot \| \nabla e^{k+1} \|_{L^{3/2} }    
  \cdot \| e^{k+1} \|_{L^6}  \nonumber 
\\
  &\le& 
  C_{21} C_7^2 s  
   \cdot \| \nabla e^{k+1} \|_{L^{3/2} }    
  \cdot \| e^{k+1} \|_{L^6} , 
  \label{convergence-8-1} 
\end{eqnarray} 
in which the $L^\infty$ and $W^{1,6}$ stability bounds  for the exact and numerical solutions were recalled in the second step of \eqref{consistency-bound-1}. Moreover, the first term 
$\| \nabla e^{k+1} \|_{L^{3/2} }$ can be bounded in the following way: 
\begin{eqnarray} 
    \| \nabla e^{k+1} \|_{L^{3/2} }  
  \le  C_{22} \| \nabla e^{k+1} \| 
  \le C_{23} \| e^{k+1} \|_{\mathring{H}_{\rm per}^{-1} }^{1/3} \cdot 
  \| \Delta e^{k+1} \|^{2/3}   ,  
  \label{convergence-8-2} 
\end{eqnarray} 
with an earlier estimate \eqref{convergence-6-2} recalled. For the second term $\| e^{k+1} \|_{L^6}$, a 3-D Sobolev embedding could be applied so that 
\begin{eqnarray} 
    \| e^{k+1} \|_{L^6} 
  \le  C_{24} \| \nabla e^{k+1} \| 
  \le C_{25} \| e^{k+1} \|_{\mathring{H}_{\rm per}^{-1} }^{1/3} \cdot 
  \| \Delta e^{k+1} \|^{2/3}   .  
  \label{convergence-8-3} 
\end{eqnarray} 
We also note that the zero-mean property for $e^{k+1}$ was used in the first step. Therefore, a combination of 
\eqref{convergence-8-1}-\eqref{convergence-8-3} results in 
\begin{eqnarray} 
  I_{6}  \le 
  C_{26} C_7^2 s  \| e^{k+1} \|_{\mathring{H}_{\rm per}^{-1} }^{2/3} \cdot 
  \| \Delta e^{k+1} \|^{4/3}  
  \le C_{27} s \| e^{k+1} \|_{\mathring{H}_{\rm per}^{-1} }^2  
  + \frac{\varepsilon^2}{8} s \| \Delta e^{k+1} \|^2 , 
  \label{convergence-8-4} 
\end{eqnarray} 
with the Young's inequality applied in the last step. 

For the term $I_{7}$, we decompose it into two parts: $I_{7} = I_{7,1} + I_{7,2}$, with 
\begin{eqnarray} 
  I_{7,1} &=& - 12 s  \left( 
  ( \Phi^{k+1} + \phi^{k+1} ) e^{k+1} \nabla \Phi^{k+1} ,  
   \nabla e^{k+1}  \right)  ,  \quad \\
  I_{7,2} &=& - 12 s  \left( ( \phi^{k+1} )^2 \nabla e^{k+1} ,  
  \nabla e^{k+1}  \right)  .  \label{convergence-9-1} 
\end{eqnarray} 
It is clear that the second part is always non-positive: 
\begin{eqnarray} 
  I_{7,2} = - 12 s  \int_\Omega 
  ( \phi^{k+1} )^2  |  \nabla e^{k+1}  |^2  d {\bf x} \le 0 .  \label{convergence-9-2} 
\end{eqnarray} 
For the first part, an application of H\"older inequality shows that 
\begin{eqnarray} 
  I_{7,1}  &\le&  
  C_{28}  s  ( \| \Phi^{k+1} \|_{L^\infty}  + \| \phi^{k+1} \|_{L^\infty}  )    
   \cdot \| \nabla \Phi^{k+1} \|_{L^6} 
   \cdot \| \nabla e^{k+1} \|_{L^{3/2} }    
  \cdot \| e^{k+1} \|_{L^6}    \nonumber 
\\
  &\le& 
  C_{29} C_7^2 s  \| \nabla e^{k+1} \|_{L^{3/2} }    
  \cdot \| e^{k+1} \|_{L^6}  . \label{convergence-9-3} 
\end{eqnarray} 
Again, the $L^\infty$ and $W^{1,6}$ 
bounds \eqref{consistency-bound-1} for the exact and numerical solutions were recalled in the second step. Furthermore, by repeating the same analyses as \eqref{convergence-8-2}-\eqref{convergence-8-3}, we are able to arrive at the following estimate, similar to \eqref{convergence-8-4}: 
\begin{eqnarray} 
  I_{7,1}  \le 
  C_{30} C_7^2 s  
   \cdot \| e^{k+1} \|_{\mathring{H}_{\rm per}^{-1} }^{1/3} \cdot 
  \| \Delta e^{k+1} \|^{2/3}  
  \le C_{31} s  \| e^{k+1} \|_{\mathring{H}_{\rm per}^{-1} }^2  
  + \frac{\varepsilon^2}{8} s  \| \Delta e^{k+1} \|^2 . 
  \label{convergence-9-4} 
\end{eqnarray} 
Consequently, a combination of \eqref{convergence-9-1}), \eqref{convergence-9-2} and \eqref{convergence-9-4} leads to 
\begin{eqnarray} 
  I_{7}  
  \le C_{31}  s  \| e^{k+1} \|_{\mathring{H}_{\rm per}^{-1} }^2  
  + \frac{\varepsilon^2}{8} s  \| \Delta e^{k+1} \|^2 . 
  \label{convergence-9-5} 
\end{eqnarray}

Similarly, the term $I_{8}$ is also decomposed into two parts: 
$I_{8} = I_{8,1} + I_{8,2}$, with 
\begin{eqnarray} 
  I_{8,1} &=& - 8 A s  \left(  ( \nabla ( \Phi^{k+1} + \phi^{k+1} )  
   \cdot \nabla e^{k+1} )  \nabla \Phi^{k+1} , 
   \nabla e^{k+1}  \right)   ,  \nonumber 
\\
  I_{8,2} &=& - 8 A s  \left(  | \nabla \phi^{k+1} |^2 
   \nabla e^{k+1}  , \nabla e^{k+1}  \right) 
   = - 8 A s  \int_\Omega |  \nabla e^{k+1}  |^4   d {\bf x} \le 0    .  \nonumber \label{convergence-10-1} 
\end{eqnarray} 
For the first part $I_{8,1}$, the following estimate is available, in a similar way as \eqref{convergence-9-3}-\eqref{convergence-9-4}: 
\begin{eqnarray} 
  I_{8,1}  &\le&  
  C_{32}  s  (  \| \nabla \Phi^{k+1} \|_{L^6}  + \| \nabla \phi^{k+1} \|_{L^6}  )    
   \cdot \| \nabla \Phi^{k+1} \|_{L^6} 
   \cdot \| \nabla e^{k+1} \|_{L^6 }    
  \cdot \| \nabla e^{k+1} \|
	\nonumber 
	\\
&\le& 
  C_{33}  C_7^2 s  \| \nabla e^{k+1} \|_{L^6 }    
  \cdot \| e^{k+1} \|  
	\nonumber 
	\\
&\le& 
  C_{34}  C_7^2 s  \| \Delta e^{k+1} \|  
  \cdot \| e^{k+1} \|_{\mathring{H}_{\rm per}^{-1} }^{1/3} \cdot 
  \| \Delta e^{k+1} \|^{2/3}   
	\nonumber
	\\
& \le&  C_{35}  C_7^2 s  \| \Delta e^{k+1} \|^{5/3}  
  \cdot \| e^{k+1} \|_{\mathring{H}_{\rm per}^{-1} }^{1/3}  
  \le  C_{36} s  \| e^{k+1} \|_{\mathring{H}_{\rm per}^{-1} }^2  
  + \frac{\varepsilon^2}{8} s  \| \Delta e^{k+1} \|^2 ,   \nonumber
 \label{convergence-10-3} 
\end{eqnarray} 
in which the $W^{1,6}$ bound \eqref{consistency-bound-1} for the exact and numerical solutions was recalled in the second step, the 3-D Sobolev embedding from $H_{\rm per}^2$ into $W^{1,6}$ and the estimate \eqref{convergence-6-2} were used in the third step, and the Young inequality was applied at the last step. Then we arrive at 
\begin{eqnarray} 
   I_{8} = I_{8,1} + I_{8,2}   \le  I_{8,1}  
   \le C_{36} s  \| e^{k+1} \|_{\mathring{H}_{\rm per}^{-1} }^2  
  + \frac{\varepsilon^2}{8} s  \| \Delta e^{k+1} \|^2 .  
 \label{convergence-10-4} 
\end{eqnarray} 

The term $I_{9}$ can be handled in the same way as $I_{8}$. 
We begin with a decomposition $I_{9} = I_{9,1} + I_{9,2}$, with 
\begin{eqnarray} 
  I_{9,1} &=&  8 A s  \left(  ( \nabla ( \Phi^k + \phi^k )  
   \cdot \nabla e^k )  \nabla \Phi^k , 
   \nabla e^{k+1}  \right)   ,  \nonumber 
\\
  I_{9,2} &=&  8 A s  \left(  | \nabla \phi^{k+1} |^2 
   \nabla e^k  , 
   \nabla e^{k+1}  \right) . \nonumber  \label{convergence-11-1} 
\end{eqnarray} 
The following estimates can be carried out: 
\begin{eqnarray} 
  I_{9,1}  &\le&  
  C_{37}  s  (  \| \nabla \Phi^k \|_{L^6}  + \| \nabla \phi^k \|_{L^6}  )    
   \cdot \| \nabla \Phi^k \|_{L^6} 
   \cdot \| \nabla e^k \|_{L^6 }    
  \cdot \| \nabla e^{k+1} \|    \nonumber 
\\
  &\le& 
  C_{38}  C_7^2 s  \| \nabla e^k \|_{L^6 }    
  \cdot \| e^{k+1} \|  \nonumber 
\\
  &\le& 
  C_{39}  C_7^2 s  \| \Delta e^k \| 
  \cdot \| e^{k+1} \|_{\mathring{H}_{\rm per}^{-1} }^{1/3} \cdot 
  \| \Delta e^{k+1} \|^{2/3}   \nonumber   
\\
  &\le& 
   C_{40} s  \| e^{k+1} \|_{\mathring{H}_{\rm per}^{-1} }^2  
  + \frac{\varepsilon^2}{16} s  
  (  \| \Delta e^{k+1} \|^2  +  \| \Delta e^k \|^2  )  ,  \nonumber \label{convergence-11-2} 
\\ 
  I_{9,2}  &\le&  
  C_{41}  s  \| \nabla \phi^k \|_{L^6}^2     
   \cdot \| \nabla e^k \|_{L^6 }   \cdot \| \nabla e^{k+1} \|    
  \le C_{42}  C_7^2 s  \| \nabla e^k \|_{L^6 }    
  \cdot \| e^{k+1} \|  \nonumber 
\\
  &\le& 
  C_{43}  C_7^2 s  \| \Delta e^k \|  
  \cdot \| e^{k+1} \|_{\mathring{H}_{\rm per}^{-1} }^{1/3} \cdot 
  \| \Delta e^{k+1} \|^{2/3}   \nonumber   
\\
  &\le& 
   C_{44} s  \| e^{k+1} \|_{\mathring{H}_{\rm per}^{-1} }^2  
  + \frac{\varepsilon^2}{16} s  
  (  \| \Delta e^{k+1} \|^2  +  \| \Delta e^k \|^2  )  .  \nonumber 
 \label{convergence-11-3} 
\end{eqnarray} 
Consequently, we get 
\begin{eqnarray} 
   I_{9} = I_{9,1} + I_{9,2}   
   \le C_{45}  s  \| e^{k+1} \|_{\mathring{H}_{\rm per}^{-1} }^2  
  + \frac{\varepsilon^2}{8} s  (  \| \Delta e^{k+1} \|^2  +  \| \Delta e^k \|^2  )  .  
 \label{convergence-11-4} 
\end{eqnarray}

Finally, a combination of \eqref{convergence-2}, \eqref{convergence-3-4}, \eqref{convergence-4-1}, \eqref{convergence-5-1}, \eqref{convergence-5-2}, \eqref{convergence-6-5}, \eqref{convergence-7-5},
\eqref{convergence-8-4}, \eqref{convergence-9-5}, \eqref{convergence-10-4} and \eqref{convergence-11-4} yields that 
\begin{eqnarray} 
  && 
    \| e^{k+1} \|_{\mathring{H}_{\rm per}^{-1} }^2 
  - \| e^k \|_{\mathring{H}_{\rm per}^{-1} }^2   
  +  2 ( \varepsilon^{-2}  + \eta )  s \| e^{k+1} \|^2 
  + \frac{9}{8} \varepsilon^2 s \| \Delta e^{k+1}  \|^2  \nonumber 
\\
&\le&   
    C_{46}  s 
  ( \| e^{k+1} \|_{\mathring{H}_{\rm per}^{-1} }^2  
   + \| e^k \|_{\mathring{H}_{\rm per}^{-1} }^2  ) 
   + \frac38  \varepsilon^2 s \| \Delta e^k  \|^2
   + s \| \tau^k \|^2  .   \label{convergence-12} 
\end{eqnarray}
Subsequently, an application of discrete Gronwall inequality leads to an $\ell^\infty (0,T; \mathring{H}_{\rm per}^{-1}) \cap \ell^2 (0,T; H_{\rm per}^2)$ convergence of the numerical scheme \eqref{scheme-CS-Hm1}: 
\begin{eqnarray}  
    \| e^{k} \|_{\mathring{H}_{\rm per}^{-1} }^2   
  + \frac34 \varepsilon^2 s \sum_{l=0}^{k} \| \Delta e^l  \|^2  
  \le C s^2 ,   \label{convergence-13} 
\end{eqnarray}
for any $1 \le k \le K$. Note that the constant $C$ depends on the exact solution, the physical parameter $\varepsilon$, and final time $T$, independent on $s$. The proof of Theorem \ref{convergence} is finished. 

	\section{Finite difference spatial discretization in 2D}
	\label{sec:fdm}

	\subsection{Notation}
	\label{sec:notation}
In this subsection we define the discrete spatial difference operators, function space, inner products and norms, following the notations used in \cite{feng2016preconditioned, wang2011energy, wise09a}.  Let $\Omega = (0,L_x)\times(0,L_y)$, where, for simplicity, we assume $L_x =L_y =: L > 0$. We write $L = m\cdot h$, where $m$ is a positive integer. The parameter $h = \frac{L}{m}$ is called the mesh or grid spacing. We define the following two uniform, infinite grids with grid spacing $h>0$:
	\[
E := \{ x_{i+\hf} \ |\ i\in {\mathbb{Z}}\}, \quad C := \{ x_i \ |\ i\in {\mathbb{Z}}\},
	\]
where $x_i = x(i) := (i-\hf)\cdot h$. Consider the following 2D discrete periodic function spaces: 
	\begin{eqnarray*}
{\mathcal V}_{\rm per} &:=& \left\{\nu: E\times E\rightarrow {\mathbb{R}}\ \middle| \ \nu_{i+\frac12,j+\frac12}= \nu_{i+\frac12+\alpha m,j+\frac12+\beta m}, \ \forall \, i,j,\alpha,\beta\in \mathbb{Z}  \right\},
	\\
{\mathcal C}_{\rm per} &:=& \left\{\nu: C\times C
\rightarrow {\mathbb{R}}\ \middle| \ \nu_{i,j} = \nu_{i+\alpha m,j+\beta m}, \ \forall \, i,j,\alpha,\beta\in \mathbb{Z} \right\},
	\\
{\mathcal E}^{\rm ew}_{\rm per} &:=& \left\{\nu: E\times C\rightarrow {\mathbb{R}}\ \middle| \ \nu_{i+\frac12,j}= \nu_{i+\frac12+\alpha m,j+\beta m}, \ \forall \,  i,j,\alpha,\beta\in \mathbb{Z}  \right\},
	\\
{\mathcal E}^{\rm ns}_{\rm per} &:=& \left\{\nu: C \times E\rightarrow {\mathbb{ R}}\ \middle| \ \nu_{i,j+\frac12}= \nu_{i+\alpha m,j+\frac12+\beta m}, \ \forall \, i,j,\alpha,\beta\in \mathbb{Z}  \right\}.
	\end{eqnarray*}		
The functions of ${\mathcal V}_{\rm per}$ are called {\emph{vertex centered functions}}; those of ${\mathcal C}_{\rm per}$ are called {\emph{cell centered functions}}. The functions of ${\mathcal E}^{\rm ew}_{\rm per}$ are called {\emph{east-west edge-centered functions}}, and the functions of ${\mathcal E}^{\rm ns}_{\rm per}$ are called {\emph{north-south edge-centered functions}}.  We also define the mean zero space 
	\[
\mathring{\mathcal C}_{\rm per}:=\left\{\nu\in {\mathcal C}_{\rm per} \ \middle| \overline{\nu} :=  \frac{h^2}{| \Omega|} \sum_{i,j=1}^m \nu_{i,j}  = 0\right\} .
	\]

We now introduce the important difference and average operators on the spaces:  
	\begin{eqnarray*}
&& A_x \nu_{i+\hf,\Box} := \frac{1}{2}\left(\nu_{i+1,\Box} + \nu_{i,\Box} \right), \quad D_x \nu_{i+\hf,\Box} := \frac{1}{h}\left(\nu_{i+1,\Box} - \nu_{i,\Box} \right),\\
&& A_y \nu_{\Box,i+\hf} := \frac{1}{2}\left(\nu_{\Box,i+1} + \nu_{\Box,i} \right), \quad D_y \nu_{\Box,i+\hf} := \frac{1}{h}\left(\nu_{\Box,i+1} - \nu_{\Box,i} \right) , 
	\end{eqnarray*}
with $A_x,\, D_x: {\mathcal C}_{\rm per}\rightarrow{\mathcal E}_{\rm per}^{\rm ew}$ if $\Box$ is an integer, and $A_x,\, D_x: {\mathcal E}^{\rm ns}_{\rm per}\rightarrow{\mathcal V}_{\rm per}$ if $\Box$ is a half-integer, with $A_y,\, D_y: {\mathcal C}_{\rm per}\rightarrow{\mathcal E}_{\rm per}^{\rm ns}$ if $\Box$ is an integer, and $A_y,\, D_y: {\mathcal E}^{\rm ew}_{\rm per}\rightarrow{\mathcal V}_{\rm per}$ if $\Box$ is a half-integer. Likewise,
	\begin{eqnarray*}
&&a_x \nu_{i,\Box} := \frac{1}{2}\left(\nu_{i+\hf,\Box} + \nu_{i-\hf,\Box} \right),	 \quad d_x \nu_{i,\Box} := \frac{1}{h}\left(\nu_{i+\hf,\Box} - \nu_{i-\hf,\Box} \right),
	\\
&&a_y \nu_{\Box,j} := \frac{1}{2}\left(\nu_{\Box,j+\hf} + \nu_{\Box,j-\hf} \right),	 \quad d_y \nu_{\Box,j} := \frac{1}{h}\left(\nu_{\Box,j+\hf} - \nu_{\Box,j-\hf} \right),
	\end{eqnarray*}
with $a_x,\, d_x : {\mathcal E}_{\rm per}^{\rm ew}\rightarrow{\mathcal C}_{\rm per}$ if $\Box$ is an integer, and $a_x,\ d_x: {\mathcal V}_{\rm per}\rightarrow{\mathcal E}^{\rm ns}_{\rm per}$ if $\Box$ is a half-integer; and with $a_y,\, d_y : {\mathcal E}_{\rm per}^{\rm ns}\rightarrow{\mathcal C}_{\rm per}$ if $\Box$ is an integer, and $a_y,\ d_y: {\mathcal V}_{\rm per}\rightarrow{\mathcal E}^{\rm ew}_{\rm per}$ if $\Box$ is a half-integer.
    
Define the 2D center-to-vertex derivatives $\mD_x,\, \mD_y : {\mathcal C}_{\rm per}\rightarrow{\mathcal V}_{\rm per}$ component-wise as
	\begin{eqnarray*}
\mD_x \nu_{i+\hf,j+\hf} &:=& A_y(D_x\nu)_{i+\hf,j+\hf} = D_x(A_y\nu)_{i+\hf,j+\hf}
	\nonumber
	\\
&=& \frac{1}{2h}\left(\nu_{i+1,j+1}-\nu_{i,j+1}+\nu_{i+1,j}-\nu_{i,j} \right) ,
	\\
\mD_y\nu_{i+\hf,j+\hf} &:=& A_x(D_y\nu)_{i+\hf,j+\hf} = D_y(A_x\nu)_{i+\hf,j+\hf} 
	\nonumber
	\\
&=& \frac{1}{2h}\left(\nu_{i+1,j+1}-\nu_{i+1,j}+\nu_{i,j+1}-\nu_{i,j} \right) . 
	\end{eqnarray*}		

The utility of these definitions is that the differences $\mD_x$ and $\mD_y$ are collocated on the grid, unlike the case for $D_x$, $D_y$. Define the 2D vertex-to-center derivatives $\md_x,\, \md_y  : {\mathcal V}_{\rm per}\rightarrow{\mathcal C}_{\rm per}$  component-wise as
	\begin{eqnarray*}
\md_x \nu_{i,j} &:=& a_y(d_x \nu)_{i,j} = d_x(a_y \nu)_{i,j} 
	\nonumber
	\\
&=& \frac{1}{2h}\left(\nu_{i+\hf,j+\hf}- \nu_{i-\hf,j+\hf}+\nu_{i+\hf,j-\hf}-\nu_{i-\hf,j-\hf}\right) ,
	\\
\md_y \nu_{i,j} &:=& a_x(d_y \nu)_{i,j} = d_y(a_x \nu)_{i,j}
	\nonumber
	\\
&=& \frac{1}{2h}\left(\nu_{i+\hf,j+\hf}- \nu_{i+\hf,j-\hf}+\nu_{i-\hf,j+\hf}-\nu_{i-\hf,j-\hf}\right).
	\end{eqnarray*}

Now the discrete gradient operator, $\nabla^v_h$: ${\mathcal C}_{\rm per}\rightarrow {\mathcal V}_{\rm per}\times {\mathcal V}_{\rm per}$, becomes  
	\[
\nabla^v_h \nu_{i+\hf,j+\hf} := (\mD_x \nu_{i+\hf,j+\hf}, \mD_y \nu_{i+\hf,j+\hf}). 
	\]	
The standard 2D discrete Laplacian, $\Delta_h : {\mathcal C}_{\rm per}\rightarrow{\mathcal C}_{\rm per}$, is given by 
	\[
\Delta_h \nu_{i,j} := d_x(D_x \nu)_{i,j} + d_y(D_y \nu)_{i,j} = \frac{1}{h^2}\left( \nu_{i+1,j}+\nu_{i-1,j}+\nu_{i,j+1}+\nu_{i,j-1} - 4\nu_{i,j}\right).
	\]
The 2D vertex-to-center average, $\mathfrak{A} : {\mathcal V}_{\rm per}\rightarrow{\mathcal C}_{\rm per}$, is defined  to be 
	\[
\mathfrak{A} \nu_{i,j} := \frac{1}{4}\left( \nu_{i-\hf,j-\hf}+\nu_{i-\hf,j+\hf}+\nu_{i+\hf,j+\hf}+\nu_{i+\hf,j-\hf}\right),
	\]
and the  2D center-to-vertex average, $\mathfrak{a} : {\mathcal C}_{\rm per} \rightarrow {\mathcal V}_{\rm per}$, becomes  
	\[
\mathfrak{a} \nu_{i+\hf,j+\hf} := \frac{1}{4}\left( \nu_{i,j}+\nu_{i+1,j}+\nu_{i,j+1}+\nu_{i+1,j+1}\right) . 
	\]		
The 2D \emph{skew} Laplacian, $\Delta^v_h : {\mathcal C}_{\rm per}\rightarrow{\mathcal C}_{\rm per}$, is introduced as
	\begin{eqnarray*}
\Delta^v_{h}\nu_{i,j} &=& \md_x(\mD_x\nu)_{i,j} + \md_y( \mD_y\nu)_{i,j} 
=\frac{1}{2h^2}\left(\nu_{i+1,j+1}+\nu_{i-1,j+1}+\nu_{i+1,j-1}+\nu_{i-1,j-1} - 4\nu_{i,j}\right) .
	\end{eqnarray*}		
In addition, the 2D undivided laplacian operator for non-constant mobility is 
	\begin{eqnarray*}
\nabla_h^v \cdot \left( {\cal M}^v(\nu) \nabla_h^v \nu \right)_{ij} := \md_x({\cal M}^v(\nu)\, \mD_x\nu )_{i,j}+ \md_y({\cal M}^v(\nu)\, \mD_y\nu )_{i,j}, ~~{\cal M}^v(\nu)= \left(\mathfrak{a}{\cal M}(\nu)\right)_{i+\hf,j+\hf}
	\end{eqnarray*}	
Hence, the 2D discrete p-Laplacian operator turns out to be   
	\begin{eqnarray*}
\nabla_h^v \cdot \left( \left| \nabla_h^v \nu\right|^{p-2} \nabla_h^v \nu \right)_{ij} := \md_x(r\, \mD_x\nu )_{i,j}+ \md_y(r\, \mD_y\nu )_{i,j},
	\end{eqnarray*}	
with 
	\[
r_{i+\frac{1}{2},j+\frac{1}{2}}:=\left[(\mD_x u )_{i+\frac{1}{2},j+\frac{1}{2}}^2+(\mD_y u )_{i+\frac{1}{2},j+\frac{1}{2}}^2\right]^{\frac{p-2}{2}}.
	\]	
Clearly, for $p=2$, we have $\Delta^v_{h}\nu = \nabla_h^v \cdot \left( \left| \nabla_h^v \nu\right|^{p-2} \nabla_h^v \nu \right)$.

Now we are ready to define the following grid inner products:  
	\begin{eqnarray*}
\ciptwo{\nu}{\xi}_2 &:= h^2\sum_{i=1}^m\sum_{j=1}^n \nu_{i,j}\psi_{i,j},\quad \nu,\, \xi\in {\mathcal C}_{\rm per},&\quad
\viptwo{\nu}{\xi} := \ciptwo{\mathfrak{A}(\nu\xi)}{1}_2 ,\quad \nu,\, \xi\in{\mathcal V}_{\rm per},
\\
\eipew{\nu}{\xi} &:= \ciptwo{A_x(\nu\xi)}{1}_2 ,\quad \nu,\, \xi\in{\mathcal E}^{\rm ew}_{\rm per},
&\quad 
\eipns{\nu}{\xi} := \ciptwo{A_y(\nu\xi)}{1}_2 ,\quad \nu,\, \xi\in{\mathcal E}^{\rm ns}_{\rm per}.
	\end{eqnarray*}	
Suppose that $\zeta\in\mathring{\mathcal C}_{\rm per}$, then there is a unique solution $\msfT_h[\zeta]\in\mathring{\mathcal C}_{\rm per}$ such that $-\Delta_h \msfT_h[\zeta] = \zeta$. We often write, in this case, $\msfT_h[\zeta] = -\Delta^{-1}_h \zeta$. The discrete analog of the $\mathring{H}^{-1}_{\rm per}$ inner product is defined as 
	\[
\moneinn{\zeta,\xi} := \iprd{\zeta}{\msfT_h[\xi]}_2 = \iprd{\msfT_h[\zeta]}{\xi}_2,\quad \zeta,\, \xi\in\mathring{\mathcal C}_{\rm per}.
	\]
where summation-by-parts formulae \cite{diegel2015analysis,wise09a} guarantees the symmetry and the second equality.

We now define the following norms for cell-centered functions.  If $\nu\in\mathring{\mathcal C}_{\rm per}$, then $\monenrm{\nu}^2 = \moneinn{\nu,\nu}$. If $\nu\in {\mathcal C}_{\rm per}$, then $\nrm{\nu}_2^2 := \ciptwo{\nu}{\nu}_2$; $\nrm{\nu}_p^p := \ciptwo{|\nu|^p}{1}_2$ ($1\le p< \infty$), and $\nrm{\nu}_\infty := \max_{1\le i\le m \atop 1\le j\le n}\left|\nu_{i,j}\right|$.
Similarly, we define the gradient norms: for $\nu\in{\mathcal C}_{\rm per}$,
	\[
\nrm{\nabh^v\nu}_p^p := \langle |\nabla_h^v\nu|^p, 1\rangle, \quad |\nabh^v\nu|^p:=[(\mD_x\nu)^2 +(\mD_y\nu)^2]^{\frac{p}{2}} = \left[\nabla_h^v\nu\cdot\nabla_h^v\nu  \right]^{\frac{p}{2}}  \in \mathcal{V}_{\rm per}, \quad 2\le p < \infty, 
	\]
and
	\[
\nrm{ \nabla_h \nu}_2^2 : = \eipew{D_x\nu}{D_x\nu} + \eipns{D_y\nu}{D_y\nu} .
	\]

\subsection{Fully discrete finite difference scheme}	
With the machinery in last subsection, the discrete energy of FCH can be rewritten as: 
\begin{eqnarray}
{\cal F}_h (\phi) = {\cal F}_{c,h} (\phi)- {\cal F}_{e,h} (\phi)
\label{dis-energy-FCH}
\end{eqnarray} 
where
	\begin{eqnarray}
{\cal F}_{c,h} (\phi)  &=&  \frac{\varepsilon^{-2} }{2}  \nrm{ \phi }_{6}^6   + \left( \frac{\varepsilon^{-2} }{2} + \frac{\eta}{2} \right) \nrm{ \phi }_2^2  + \frac{\varepsilon^2}{2} \nrm{ \Delta_h \phi }_2^2+{\cal H}_{h} (\phi) , 
	\\
{\cal F}_{e,h} (\phi)  &=&  \left( \varepsilon^{-2} + \frac{\eta}{4} \right) \nrm{ \phi }_{4}^4+\left( 1 + \frac{\eta \varepsilon^2}{2} \right)  \nrm{ \nabla_h^v \phi }_2^2  + A \nrm{ \phi }_{4}^4 + A \nrm{ \nabla_h^v\phi }_{4}^4,   
	\end{eqnarray}
and
	\begin{eqnarray}
{\cal H}_{h} (\phi)  &=& A \nrm{ \phi }_{4}^4 + A \nrm{ \nabla_h^v\phi }_{4}^4 + 3 \left(\phi^2, \mathfrak{A}(\left| \nabla_h^v \phi \right|^2)\right)_2.  
	\end{eqnarray}

\begin{prop}\label{prop:1st}
Suppose $\phi \in {\mathcal C}_{\rm per}$. The first variational derivative of ${\cal H}_h(\phi)$ is
	\begin{eqnarray*}
{\delta\cal H}_{h}(\phi)
&=&  4A\phi^3-
4A\left(\md_x\left([(\mD_x\phi)^2 +(\mD_y\phi)^2] \mD_x\phi\right)
+ \md_y\left([(\mD_x\phi)^2 +(\mD_y\phi)^2] \mD_y\phi\right)\right) \\
& &+ \ 6 \phi \mathfrak{A}[(\mD_x\phi)^2 +(\mD_y\phi)^2]
-
6 \left(\md_x\left(\mathfrak{a}\left(\phi^2\right)\mD_x\phi\right)
+ \md_y\left(\mathfrak{a}\left(\phi^2\right)\mD_y\phi\right) \right).
	\end{eqnarray*}
	\end{prop}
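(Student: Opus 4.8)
The plan is to compute the first variation directly from the definition. Since $\mathcal{C}_{\rm per}$ is finite dimensional, $\delta\mathcal{H}_h(\phi)$ is the unique element of $\mathcal{C}_{\rm per}$ satisfying $\ciptwo{\delta\mathcal{H}_h(\phi)}{\psi}_2 = \left.\frac{d}{ds}\mathcal{H}_h(\phi+s\psi)\right|_{s=0}$ for every $\psi\in\mathcal{C}_{\rm per}$; so I would fix an arbitrary $\psi$, evaluate the directional derivative, rearrange it into the form $\ciptwo{(\,\cdots)}{\psi}_2$, and read off $\delta\mathcal{H}_h(\phi)$. Writing $\mathcal{H}_h(\phi) = A\nrm{\phi}_4^4 + A\nrm{\nabla_h^v\phi}_4^4 + 3\ciptwo{\phi^2}{\mathfrak{A}(|\nabla_h^v\phi|^2)}_2$, the computation splits into three independent pieces. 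The first piece, $A\nrm{\phi}_4^4 = A\ciptwo{\phi^4}{1}_2$, contributes $4A\ciptwo{\phi^3}{\psi}_2$ by the elementary chain rule, producing the term $4A\phi^3$.

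For the second piece, $A\nrm{\nabla_h^v\phi}_4^4 = A\viptwo{[(\mD_x\phi)^2+(\mD_y\phi)^2]^2}{1}$, differentiating in $s$ and using the chain rule gives $4A\viptwo{[(\mD_x\phi)^2+(\mD_y\phi)^2]\mD_x\phi}{\mD_x\psi} + 4A\viptwo{[(\mD_x\phi)^2+(\mD_y\phi)^2]\mD_y\phi}{\mD_y\psi}$. Applying the discrete summation-by-parts identity $\viptwo{v}{\mD_x\xi} = -\ciptwo{\md_x v}{\xi}_2$ (and its $y$-analogue), valid for $v\in\mathcal{V}_{\rm per}$ and $\xi\in\mathcal{C}_{\rm per}$ on the periodic grid \cite{diegel2015analysis,wise09a}, transfers $\mD_x\psi$ and $\mD_y\psi$ back onto $\psi$, yielding exactly the contribution $-4A\bigl(\md_x([(\mD_x\phi)^2+(\mD_y\phi)^2]\mD_x\phi) + \md_y([(\mD_x\phi)^2+(\mD_y\phi)^2]\mD_y\phi)\bigr)$.

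For the third (cross) piece, $3\ciptwo{\phi^2}{\mathfrak{A}(|\nabla_h^v\phi|^2)}_2$, the product rule gives two contributions. Varying the factor $\phi^2$ produces $6\ciptwo{\phi\,\mathfrak{A}[(\mD_x\phi)^2+(\mD_y\phi)^2]}{\psi}_2$, which is already tested against $\psi$ and yields the term $6\phi\,\mathfrak{A}[(\mD_x\phi)^2+(\mD_y\phi)^2]$. Varying $|\nabla_h^v\phi|^2$ produces $6\ciptwo{\phi^2}{\mathfrak{A}(\mD_x\phi\,\mD_x\psi + \mD_y\phi\,\mD_y\psi)}_2$; here I would first invoke the adjointness $\ciptwo{\mathfrak{A}v}{c}_2 = \viptwo{v}{\mathfrak{a}c}$ between the vertex-to-center and center-to-vertex averages (a direct reindexing; see \cite{wise09a}) to rewrite this as $6\viptwo{\mathfrak{a}(\phi^2)\,\mD_x\phi}{\mD_x\psi} + 6\viptwo{\mathfrak{a}(\phi^2)\,\mD_y\phi}{\mD_y\psi}$, and then apply the same summation-by-parts identity to obtain $-6\bigl(\md_x(\mathfrak{a}(\phi^2)\mD_x\phi) + \md_y(\mathfrak{a}(\phi^2)\mD_y\phi)\bigr)$ tested against $\psi$. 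Adding the three pieces and using that $\psi$ is arbitrary gives the claimed formula.

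The only delicate aspect is the bookkeeping: one must use the correct summation-by-parts pairings for the collocated difference operators $\mD_x,\mD_y$ and $\md_x,\md_y$ (which map between $\mathcal{C}_{\rm per}$ and $\mathcal{V}_{\rm per}$) together with the adjoint relation between $\mathfrak{A}$ and $\mathfrak{a}$, and to verify that periodicity makes all boundary contributions vanish. These discrete analogues of integration by parts are recorded in \cite{diegel2015analysis,wise09a}; with them in hand the calculation is routine.
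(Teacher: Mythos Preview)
Your argument is correct and is precisely the computation one would carry out here. The paper itself states this proposition without proof, so there is no ``paper's own proof'' to compare against; your direct-variation approach using the summation-by-parts identities $\viptwo{v}{\mD_x\xi} = -\ciptwo{\md_x v}{\xi}_2$ and the adjointness $\ciptwo{\mathfrak{A}v}{c}_2 = \viptwo{v}{\mathfrak{a}c}$ is the natural and expected justification.
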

	
	\begin{lem}\label{lem:disconv}
Suppose that $\phi\in \mathcal{C}_{\rm per}$ and $A\geq 1$ then  ${\cal H}_{h} (\phi)$, ${\cal F}_{c,h} (\phi)$ and ${\cal F}_{e,h} (\phi)$ are strictly convex.
	\end{lem}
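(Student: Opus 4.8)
The plan is to reduce the whole statement to the convexity of $\mathcal{H}_h$ and then to recycle, almost verbatim, the pointwise estimates already proved for Lemma~\ref{lem:Hconv}. The functionals $\mathcal{F}_{c,h}$ and $\mathcal{F}_{e,h}$ are then easy. Indeed $\mathcal{F}_{c,h}(\phi)=\mathcal{H}_h(\phi)+\frac{\varepsilon^{-2}}{2}\|\phi\|_6^6+\bigl(\frac{\varepsilon^{-2}}{2}+\frac{\eta}{2}\bigr)\|\phi\|_2^2+\frac{\varepsilon^2}{2}\|\Delta_h\phi\|_2^2$, and on the finite-dimensional space $\mathcal{C}_{\rm per}$ the three extra terms are convex — the first two because $x\mapsto x^6$ and $x\mapsto x^2$ are convex and act coordinatewise, the last because $\Delta_h$ is linear and $\|\cdot\|_2^2$ is a nonnegative quadratic form — while $\bigl(\frac{\varepsilon^{-2}}{2}+\frac{\eta}{2}\bigr)\|\phi\|_2^2$ is strictly convex, so strict convexity of $\mathcal{F}_{c,h}$ follows the moment $\mathcal{H}_h$ is known to be convex. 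Similarly $\mathcal{F}_{e,h}(\phi)=\bigl(\varepsilon^{-2}+\frac{\eta}{4}+A\bigr)\|\phi\|_4^4+\bigl(1+\frac{\eta\varepsilon^2}{2}\bigr)\|\nabla_h^v\phi\|_2^2+A\|\nabla_h^v\phi\|_4^4$, in which the $\|\phi\|_4^4$ term is strictly convex (coordinatewise $x\mapsto x^4$) while $\|\nabla_h^v\phi\|_2^2$ and $\|\nabla_h^v\phi\|_4^4$ are convex because $\nabla_h^v$ is linear and the maps $|\cdot|^2,\,|\cdot|^4$ on $\mathbb{R}^2$ are convex; the displayed coefficients are nonnegative under the standing parameter assumptions (e.g.\ in the FCH regime $\eta>0$), so $\mathcal{F}_{e,h}$ is strictly convex outright. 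Everything thus comes down to showing $\mathcal{H}_h$ is strictly convex.

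The crux is an algebraic reorganization of $\mathcal{H}_h$ into a sum of copies of the continuous density from \eqref{convexity-H-1}. Write $c\sim v$ when cell $c$ and vertex $v$ are adjacent; each cell borders exactly its four corner vertices and each vertex borders exactly four cells. Put $r_v:=|\nabla_h^v\phi|^2_v=(\mD_x\phi)_v^2+(\mD_y\phi)_v^2$, which is a nonnegative quadratic form in $\phi$ since $\mD_x,\mD_y$ are linear, and let $\phi_c$ denote the value of $\phi$ at the cell $c$. Expanding $\mathfrak{A}$ as the average over the four corner vertices and interchanging the order of summation over incident pairs, one checks
\[
A\|\phi\|_4^4=\tfrac{h^2}{4}\sum_v\sum_{c\sim v}A\phi_c^4,\qquad A\|\nabla_h^v\phi\|_4^4=\tfrac{h^2}{4}\sum_v\sum_{c\sim v}Ar_v^2,\qquad 3\bigl(\phi^2,\mathfrak{A}(|\nabla_h^v\phi|^2)\bigr)_2=\tfrac{h^2}{4}\sum_v\sum_{c\sim v}3\phi_c^2 r_v ,
\]
so that, adding,
\[
\mathcal{H}_h(\phi)=\tfrac{h^2}{4}\sum_v\sum_{c\sim v}h_{v,c}(\phi),\qquad h_{v,c}(\phi):=A\phi_c^4+Ar_v^2+3\phi_c^2 r_v .
\]
Now $h_{v,c}$ has exactly the structure of the integrand $h(\phi)=A(\phi^4+|\nabla\phi|^4)+3\phi^2|\nabla\phi|^2$ of the proof of Lemma~\ref{lem:Hconv}, with the scalar ``$\phi$'' played by the single cell value $\phi_c$ and the vector ``$\nabla\phi$'' by $\bigl((\mD_x\phi)_v,(\mD_y\phi)_v\bigr)\in\mathbb{R}^2$ — both affine-linear in $\phi\in\mathcal{C}_{\rm per}$, which is the only feature that proof exploits.

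Consequently the pointwise inequalities \eqref{lemma 1-3}, \eqref{lemma 1-4-1}, \eqref{lemma 1-4-2} apply to each $h_{v,c}$ with no change; combining them as in Lemma~\ref{lem:Hconv} but keeping the sharp identity $\frac{p^4+q^4}{2}-\bigl(\frac{p+q}{2}\bigr)^4=\frac38(p^2-q^2)^2+\frac1{16}(p-q)^4$ (a refinement of \eqref{lemma 1-4-1}; its $\frac38(p^2-q^2)^2$ part, together with \eqref{lemma 1-4-2}, exactly cancels the cross-term deficit \eqref{lemma 1-3} when $A=1$) gives, for all $\phi_1,\phi_2\in\mathcal{C}_{\rm per}$ and $A\ge1$,
\[
\tfrac{h_{v,c}(\phi_1)+h_{v,c}(\phi_2)}{2}-h_{v,c}\!\Bigl(\tfrac{\phi_1+\phi_2}{2}\Bigr)\ \ge\ \tfrac1{16}(\phi_{1,c}-\phi_{2,c})^4\ \ge\ 0 .
\]
Summing over all incident pairs $(v,c)$ — each cell occurring exactly four times — yields $\frac{\mathcal{H}_h(\phi_1)+\mathcal{H}_h(\phi_2)}{2}-\mathcal{H}_h\bigl(\frac{\phi_1+\phi_2}{2}\bigr)\ge\frac1{16}\|\phi_1-\phi_2\|_4^4$, which is positive whenever $\phi_1\ne\phi_2$; since $\mathcal{H}_h$ is a polynomial, this midpoint strict convexity upgrades to strict convexity, and with the reduction of the first paragraph the lemma follows.

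The one genuine obstacle is spotting the reorganization in the second paragraph: one must see that the coupled cross term $3(\phi^2,\mathfrak{A}(|\nabla_h^v\phi|^2))_2$, which intertwines cell-centered and vertex-centered data through $\mathfrak{A}$, reassembles — together with the distributed $A\|\phi\|_4^4$ and $A\|\nabla_h^v\phi\|_4^4$ — into exact discrete replicas of the continuous density $h$, so that no new inequality beyond those of Lemma~\ref{lem:Hconv} is required. The rest (the coordinatewise convexity facts, and the sign bookkeeping for the coefficients in $\mathcal{F}_{c,h}$ and $\mathcal{F}_{e,h}$) is routine; alternatively one could instead verify positive-semidefiniteness of the second variation using Proposition~\ref{prop:1st}, but the midpoint-convexity route is cleaner and parallels Lemma~\ref{lem:Hconv} directly.
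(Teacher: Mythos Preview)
Your proof is correct and follows the same route the paper indicates: reduce to the convexity of $\mathcal{H}_h$ via the obvious decomposition of $\mathcal{F}_{c,h}$ and $\mathcal{F}_{e,h}$, then adapt the pointwise argument of Lemma~\ref{lem:Hconv}. The paper's own proof is a two-line pointer (``similar to Lemma~\ref{lem:Hconv}''), so you are not deviating but rather supplying the details the paper omits.

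Two of those details are worth flagging as genuine additions. First, your cell--vertex reorganization $\mathcal{H}_h(\phi)=\tfrac{h^2}{4}\sum_v\sum_{c\sim v}\bigl(A\phi_c^4+Ar_v^2+3\phi_c^2 r_v\bigr)$ is exactly the step that makes ``similar to Lemma~\ref{lem:Hconv}'' honest: it isolates, at each incident pair $(v,c)$, a replica of the continuous density $h$ in which the scalar and the gradient are both affine in $\phi$, so that \eqref{lemma 1-3}--\eqref{lemma 1-4-2} transfer verbatim. Second, the lemma asserts \emph{strict} convexity, whereas Lemma~\ref{lem:Hconv} only yields convexity; your refinement $\tfrac{p^4+q^4}{2}-\bigl(\tfrac{p+q}{2}\bigr)^4=\tfrac38(p^2-q^2)^2+\tfrac1{16}(p-q)^4$ recovers the missing margin $\tfrac1{16}\|\phi_1-\phi_2\|_4^4$ and closes that gap, which the paper does not address explicitly.
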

	\begin{proof}
The convexity proof of ${\cal H}_h(\phi)$ is similar to Lemma~\ref{lem:Hconv}. The convexities of  ${\cal F}_{c,h} (\phi)$ and ${\cal F}_{e,h} (\phi)$ follow from the convexity of $\mathcal{H}_h(\phi)$.
	\end{proof}

According to Proposition~\ref{prop:1st} and some other standard calculations~\cite{shen12}, the fully discretized finite difference convex splitting  scheme can be rewritten as: given $f, g \in \mathcal{C}_{\rm per}$, find $\phi^{k+1}, \tilde\mu^{k+1} \in \mathcal{C}_{\rm per}$ such that
\begin{eqnarray} 
\phi^{k+1} - s \Delta_h \tilde\mu^{k+1} =  g, \label{scheme-FD-1}
\end{eqnarray}
where
	\begin{eqnarray}
\tilde\mu^{k+1}&=& \delta_\phi {\cal F}_{c,h}(\phi^{k+1}) - \delta_\phi {\cal F}_{e,h}(\phi^k) \nonumber\\
 &=& 3 \varepsilon^{-2} (\phi^{k+1})^5 
+ 4 A  (\phi^{k+1})^3 
+ (\varepsilon^{-2}+\eta) \phi^{k+1}
+ 6 (\phi^{k+1})^2 \mathfrak{A}(|\nabla_h^v\phi^{k+1}|^2)  + \varepsilon^2 \Delta_h^2 \phi ^{k+1}
	\nonumber 
	\\
& & - \ 6 \nabla_h^v \cdot (\mathfrak{a}\big(\big(\phi^{k+1}\big)^2\big)\nabla_h^v \phi^{k+1} )
- 4A\nabla_h^v \cdot ( | \nabla_h^v \phi^{k+1} |^{2} \nabla_h^v \phi^{k+1} )-f ,  	\label{scheme-FD-2} 
	\end{eqnarray} 
with
	\begin{eqnarray}
g := \phi^k,~ f:= - ( 4 \varepsilon^{-2} +  \eta ) ( \phi^k )^3
  + ( 2 + \eta \varepsilon^2 )  \Delta_h^v \phi^k  
  - 4 A ( \phi^k )^3     
  + 4 A \nabla_h^v \cdot ( | \nabla_h^v \phi^k |^{2} \nabla_h^v \phi^k ) .  
	\label{scheme-FD-3} 
	\end{eqnarray}   
This scheme is mass-conservative in the sense that $  \phi-g \in \mathring{\mathcal C}_{\rm per}$. 
	\begin{thm}
	\label{thm:eng-decay-FD} 
The fully discrete scheme (\ref{scheme-FD-1}) -- (\ref{scheme-FD-3}) is unconditionally discrete energy  stable, ${\cal F}_h (\phi^{k+1}) \le {\cal F}_h (\phi^k)$, and unconditionally uniquely solvable.
\end{thm}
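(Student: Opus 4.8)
The plan is to mirror, at the fully discrete level, the argument used for the space-continuous scheme in Theorem~\ref{thm:eng-decay}, since the finite-difference spatial discretization has been engineered precisely so that all the structural ingredients carry over. First I would establish unique solvability: by Lemma~\ref{lem:disconv}, the functionals $\mathcal{F}_{c,h}$ and $\mathcal{F}_{e,h}$ are strictly convex on $\mathcal{C}_{\rm per}$, hence so is the map $\phi \mapsto \tfrac{1}{2s}\|\phi - g\|_{-1}^2 + \mathcal{F}_{c,h}(\phi) - \left(\delta_\phi \mathcal{F}_{e,h}(\phi^k), \phi\right)_2$ on the affine subspace $\{\phi \in \mathcal{C}_{\rm per} : \phi - g \in \mathring{\mathcal{C}}_{\rm per}\}$; since this functional is coercive (the $\varepsilon^{-2}\|\phi\|_6^6/2$ term dominates), it has a unique minimizer, and the Euler--Lagrange equation of that minimization problem is exactly the scheme \eqref{scheme-FD-1}--\eqref{scheme-FD-3} after applying $-\Delta_h^{-1}$ — one checks that setting the variational derivative to zero over $\mathring{\mathcal{C}}_{\rm per}$ reproduces \eqref{scheme-FD-2} with $\tilde\mu^{k+1} = -\Delta_h^{-1}\bigl((\phi^{k+1}-g)/s\bigr)$, using $\phi^{k+1}-g \in \mathring{\mathcal{C}}_{\rm per}$ to make $-\Delta_h^{-1}$ well-defined.

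For energy stability I would use the discrete analogue of Proposition~\ref{splitting-ineq}: for strictly convex $\mathcal{F}_{c,h}, \mathcal{F}_{e,h}$ on $\mathcal{C}_{\rm per}$ one has the one-sided estimate
\begin{equation*}
\mathcal{F}_h(\phi^{k+1}) - \mathcal{F}_h(\phi^k) \le \left(\delta_\phi \mathcal{F}_{c,h}(\phi^{k+1}) - \delta_\phi \mathcal{F}_{e,h}(\phi^k),\, \phi^{k+1} - \phi^k\right)_2 = \left(\tilde\mu^{k+1},\, \phi^{k+1} - \phi^k\right)_2,
\end{equation*}
which follows termwise from convexity exactly as in Lemma~\ref{lem:Hconv}'s proof (each convex piece $P$ satisfies $P(\phi^{k+1}) - P(\phi^k) \le (\delta P(\phi^{k+1}), \phi^{k+1}-\phi^k)$, each concave piece the reverse inequality at $\phi^k$). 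Then I substitute $\phi^{k+1} - \phi^k = s\Delta_h \tilde\mu^{k+1}$ from \eqref{scheme-FD-1} and apply discrete summation-by-parts to get
\begin{equation*}
\left(\tilde\mu^{k+1},\, s\Delta_h \tilde\mu^{k+1}\right)_2 = -s\,\|\nabla_h \tilde\mu^{k+1}\|_2^2 \le 0,
\end{equation*}
giving $\mathcal{F}_h(\phi^{k+1}) \le \mathcal{F}_h(\phi^k)$ with no constraint on $s$.

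The main obstacle, such as it is, lies not in the logical structure — which is a faithful transcription of the continuous case — but in verifying that the discrete differential-operator identities actually hold: specifically, that the first variational derivative of $\mathcal{F}_{c,h}$ really is the expression appearing in \eqref{scheme-FD-2} (this is where Proposition~\ref{prop:1st} is invoked, together with the fact that $\delta_\phi(\tfrac12\|\Delta_h\phi\|_2^2) = \Delta_h^2\phi$ and $\delta_\phi(\tfrac12\|\phi\|_2^2) = \phi$ in the $(\cdot,\cdot)_2$ inner product), and that the discrete summation-by-parts formulae relating the $\md$/$\mD$ operators, the vertex and cell inner products, and the averaging operators $\mathfrak{A}$, $\mathfrak{a}$ are consistent enough that ``$\delta_\phi$ of the energy, paired against an increment'' genuinely equals ``the scheme's $\tilde\mu^{k+1}$ paired against that increment.'' These are the identities recorded in \cite{wise09a, wang2011energy, feng2016preconditioned}; I would cite them rather than re-derive them. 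Once those bookkeeping facts are in hand, the solvability and stability both drop out immediately from discrete convexity plus summation-by-parts, and the proof is complete.
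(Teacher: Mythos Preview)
Your proposal is correct and follows essentially the same approach as the paper: the paper's proof is a one-line citation of Lemma~\ref{lem:disconv} together with the discrete version of the splitting inequality~\eqref{eqn:splitting-ineq} from~\cite{wise09a}, and you have simply unpacked those two ingredients explicitly. The solvability argument via minimization of the strictly convex energy~\eqref{discrete energy-1} and the stability argument via the discrete convex-splitting inequality plus summation-by-parts are precisely what the paper intends by that citation.
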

\begin{proof}
The proof follows from Lemma~\ref{lem:disconv} and the discrete version of \eqref{splitting-ineq} found in~\cite{wise09a}.
	\end{proof}

Following similar ideas as in the analyses for the semi-discrete case, we are able to derive the unique solvability, unconditional energy stability and the $\ell^\infty (0,T; H^{-1}) \cap \ell^2 (0,T; H^2)$ convergence for the fully discrete scheme (\ref{scheme-FD-1}) -- (\ref{scheme-FD-3}).  The detailed proofs are skipped for the sake of brevity and are left to interested readers. 


	\begin{thm} \label{convergence-FD}
Let $\Phi \in \mathcal{R}_2$ (see (\ref{regularity assumption-2})) be the exact periodic solution of the FCH equation \eqref{equation-FCH-Hm1} with the initial data $\Phi(0) =\phi_0\in H^2_{\rm per}(\Omega)$. Suppose $\phi$ is the fully-discrete solution of \eqref{scheme-FD-1} -- \eqref{scheme-FD-3}. Then the following convergence result holds as $s$, $h$ goes to zero: 
\begin{eqnarray} 
  \| \Phi (t_k) - \phi^k \|_{-1}   
  + \left( \varepsilon^2 s \sum_{\ell=0}^{k} \| \Delta_h  ( \Phi (t_\ell) - \phi^\ell ) \|^2  \right)^{1/2} 
  \le C  ( s + h^2 )  ,  
\label{convergence-FD-1}
\end{eqnarray}     
where the constant $C>0$ is independent of $s$ and $h$. 
	\end{thm}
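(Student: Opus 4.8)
The plan is to mirror the semi-discrete argument of Theorem~\ref{convergence}, replacing the spatially continuous error analysis with its finite-difference counterpart, and to absorb the spatial truncation error into the same discrete Gronwall estimate. First I would establish a fully discrete consistency result: inserting the exact solution $\Phi(t_k)$, evaluated on the grid, into the scheme \eqref{scheme-FD-1}--\eqref{scheme-FD-3} and Taylor-expanding in both time and space yields a truncation error $\tau^k$ (now with both temporal and spatial components) satisfying $\nrm{\tau^k}_2 \le C(s+h^2)$. The temporal $O(s)$ piece is exactly as in the semi-discrete case; the spatial $O(h^2)$ piece comes from the second-order accuracy of the centered difference operators $\Delta_h$, $\mD_x$, $\mD_y$, $\md_x$, $\md_y$ and the averages $\mathfrak{A}$, $\mathfrak{a}$, which is why the stronger regularity class $\mathcal{R}_2$ (requiring $C^8_{\rm per}$ in space) is imposed — one needs enough spatial derivatives to control the truncation of the $\varepsilon^2\Delta_h^2$ term and the quasilinear gradient terms.

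Next I would set up the discrete error equation. Since both the exact solution and the scheme are mass conservative, the error $e^k := \Phi(t_k)-\phi^k$ lies in $\mathring{\mathcal C}_{\rm per}$, so $\msfT_h[e^k] = -\Delta_h^{-1} e^k$ is well-defined and one can test the error equation in the discrete $\moneinn{\cdot,\cdot}$ inner product, i.e. pair with $2\msfT_h[e^k]$. Expanding the polynomial nonlinearities $\phi^5$, $\phi^3$ via their telescoping factorizations (as in \eqref{consistency-2}) produces nonnegative coefficient factors, so those contributions are sign-definite and non-positive on the left exactly as $I_1,I_2,I_5$ were; the quasilinear terms $6\nabla_h^v\cdot(\mathfrak{a}(\phi^2)\nabla_h^v\phi)$ and $4A\nabla_h^v\cdot(|\nabla_h^v\phi|^2\nabla_h^v\phi)$ split into a dominant non-positive diagonal part plus lower-order cross terms analogous to $I_6$--$I_9$. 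The key structural identity $-\iprd{\Delta_h v}{\msfT_h[v]}_2 = \nrm{v}_2^2$ and the discrete summation-by-parts formulae give the coercive terms $2(\varepsilon^{-2}+\eta)s\nrm{e^{k+1}}_2^2 + 2\varepsilon^2 s\nrm{\Delta_h e^{k+1}}_2^2$ on the left. All cross terms are then controlled by discrete interpolation inequalities — the discrete analogues of \eqref{convergence-3-2} and \eqref{convergence-6-2} relating $\nrm{\cdot}_2$ and $\nrm{\nabla_h^v\cdot}_2$ to $\nrm{\cdot}_{-1}$ and $\nrm{\Delta_h\cdot}_2$ — together with discrete Sobolev embeddings ($\mathcal{C}_{\rm per}\hookrightarrow L^\infty$, $W^{1,6}$) and Young's inequality, absorbing the high-order parts into the $\frac{\varepsilon^2}{8}s\nrm{\Delta_h e^{k+1}}_2^2$ reservoir. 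This requires the uniform-in-$(k,s,h)$ bounds $\nrm{\phi^k}_\infty, \nrm{\nabla_h^v\phi^k}_6, \nrm{\phi^k}_{H^2_h}\le C_7$, which follow from the fully discrete energy stability (Theorem~\ref{thm:eng-decay-FD}) together with the discrete version of the coercivity estimate \eqref{energy-FCH-2}.

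Assembling all the bounds yields a discrete inequality of the form
\begin{equation}
\nrm{e^{k+1}}_{-1}^2 - \nrm{e^k}_{-1}^2 + C s\varepsilon^2\nrm{\Delta_h e^{k+1}}_2^2 \le C s\bigl(\nrm{e^{k+1}}_{-1}^2 + \nrm{e^k}_{-1}^2\bigr) + \tfrac38\varepsilon^2 s\nrm{\Delta_h e^k}_2^2 + s\nrm{\tau^k}_2^2,
\end{equation}
and then the discrete Gronwall inequality, using $e^0 = 0$ and $\nrm{\tau^k}_2^2 \le C(s+h^2)^2$, gives \eqref{convergence-FD-1}. I expect the main obstacle to be the bookkeeping for the quasilinear gradient terms at the fully discrete level: one must verify that the non-collocated difference operators $\mD_x,\mD_y$ and the averaging operators $\mathfrak{A},\mathfrak{a}$ still permit a clean discrete summation-by-parts producing a sign-definite leading term, and that the discrete interpolation/embedding inequalities hold with constants independent of $h$ (which is standard but must be invoked carefully). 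Because these are the same manipulations already carried out in the semi-discrete proof and the authors explicitly note the details are left to the reader, the technical content is routine once the discrete functional-analytic toolkit (from \cite{diegel2015analysis,wise09a,feng2016preconditioned}) is in place; the only genuinely new ingredient is the $O(h^2)$ spatial truncation estimate, which forces the upgrade to regularity class $\mathcal{R}_2$.
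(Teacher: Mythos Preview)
Your proposal is correct and is precisely the approach the paper intends: the paper does not give a proof of this theorem at all, stating only that ``following similar ideas as in the analyses for the semi-discrete case \ldots\ the detailed proofs are skipped for the sake of brevity and are left to interested readers.'' Your outline --- discrete consistency with $\nrm{\tau^k}_2\le C(s+h^2)$ requiring the upgraded regularity $\mathcal{R}_2$, testing the error equation in the discrete $\moneinn{\cdot,\cdot}$ inner product, reproducing the sign-definite terms $I_1,I_2,I_5,I_{7,2},I_{8,2}$ at the discrete level, controlling the remaining cross terms via discrete interpolation and Sobolev inequalities with $h$-independent constants, and closing with discrete Gronwall --- is exactly the translation the authors have in mind. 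One minor slip: the test function should be $2\msfT_h[e^{k+1}]$ rather than $2\msfT_h[e^k]$, so that the identity $2(a-b,a)=|a|^2-|b|^2+|a-b|^2$ produces the leading $\nrm{e^{k+1}}_{-1}^2-\nrm{e^k}_{-1}^2$ terms (the paper's semi-discrete proof contains the same typo).
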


	\section{Preconditioned steepest descent (PSD) solver}\label{sec:psd}

In this section we describe a preconditioned steepest descent (PSD) algorithm for advancing the convex splitting scheme in time following the practical and  theoretical framework in~\cite{feng2016preconditioned}. The fully discrete scheme \eqref{scheme-FD-1} -- \eqref{scheme-FD-3} can be recast as a minimization problem with an energy that involves the $\monenrm{\, \cdot\, }^2$ norm: For any $\phi \in \mathcal{C}_{\rm per}$,
	\begin{eqnarray}
E_h[\phi]&=& \frac{1}{2}\nrm{\phi-g}_{-1}^2 
+ \frac{s\varepsilon^{-2}}{2} \norm{\phi}{6}^6
+ \frac{s(\varepsilon^{-2}+\eta)}{2} \norm{\phi}{2}^2\nonumber\\
&&+ As \norm{\phi}{4}^4
+ As\norm{\nabla_h^v u}{4}^4
+ 3\iprd{\phi^2}{\mathfrak{A}\left(|\nabla_h^v\phi|^2\right)}_2
+\frac{s\varepsilon^2}{2}\nrm{\Delta_h \phi}_2^2
+ s\iprd{g}{\phi}_2,  \label{discrete energy-1}
	\end{eqnarray}
which is strictly convex provided that $A\geq 1$. One will observe that the fully discrete scheme \eqref{scheme-FD-1} -- \eqref{scheme-FD-3} is the discrete variation of the strictly convex energy \eqref{discrete energy-1} set equal to zero. The nonlinear scheme at a fixed time level may be expressed as
	\begin{equation}
\mathcal{N}_h[\phi] = f,
	\end{equation}
where
	\begin{eqnarray} 
\mathcal{N}_h[\phi] &=& -\Delta_h^{-1}(\phi-g) + 3s \varepsilon^{-2} \phi^5 + 4 sA  \phi^3 + s(\varepsilon^{-2}+\eta) \phi + 6s \phi^2 \mathfrak{A}(|\nabla_h^v\phi|^2)  	\nonumber 
	\\
&& - 6 s \nabla_h^v \cdot \left(\mathfrak{a}\left(\phi^2\right) \nabla_h^v \phi \right) - 4sA  \nabla_h^v \cdot ( | \nabla_h^v \phi |^{2} \nabla_h^v \phi) + s\varepsilon^2 \Delta_h^2 \phi.
	\label{non-operator} 
	\end{eqnarray} 
The main idea of the PSD solver is to use a linearized version of the nonlinear operator as a pre-conditioner, or in other words, as a metric for choosing the search direction.  A linearized version of the nonlinear operator $\mathcal{N}$  is defined as follows: $\mathcal{L}_h: \mathring{\mathcal C}_{\rm per} \to \mathring{\mathcal C}_{\rm per}$,
	\begin{equation*}
{\mathcal L}_h[\psi] := -\Delta_h^{-1} \psi +s(4\varepsilon^{-2}+\eta+4A+6)\psi - s(6+4A)\Delta_h \psi  +s\varepsilon^2 \Delta_h^2 \psi.
	\end{equation*}
Clearly, this is a positive, symmetric operator, and we use this as a pre-conditioner for the method. Specifically, this ``metric" is used to find an appropriate search directtion for our steepest descent solver~\cite{feng2016preconditioned}. Given the current iterate $\phi^{n}\in {\mathcal C}_{\rm per}$, we define the following \emph{search direction} problem: find $d^n \in \mathring{\mathcal C}_{\rm per}$ such that
\[
{\mathcal L}_h[d^n]= f-\mathcal{N}_h[\phi^n]:=r^n,
\]
where $r^n$ is the nonlinear residual of the $n^{\rm th}$ iterate $\phi^n$. This last equation  can be solved efficiently using the Fast Fourier Transform (FFT).

We then define the next iterate as
	\begin{equation}
\phi^{n+1} = \phi^{n} + \overline{\alpha} d^n,
	\end{equation}
where $\overline{\alpha}\in\mathbb{R}$ is the unique solution to the steepest descent line minimization problem 
	\begin{equation}
\overline{\alpha} := \operatorname*{argmax}_{\alpha\in\mathbb{R}} E_h[\phi^{n} + \alpha d^n]= \operatorname*{argzero}_{\alpha\in\mathbb{R}}\delta E_h[\phi^{n} + \alpha d^n](d^n) .
	\label{eqn-search}
	\end{equation}
The theory in~\cite{feng2016preconditioned} suggests that $\phi^n \to \phi^{k+1}$ geometrically as $n\to \infty$, where $\mathcal{N}_h[\phi^{k+1}] = f$, \emph{i.e.}, $\phi^{k+1}$ is the solution of the scheme \eqref{scheme-FD-1} -- \eqref{scheme-FD-3} at time level $k+1$. Furthermore, the convergence rate is independent of $h$. 
	
	\section{Numerical results}\label{sec:num}
	 
We perform some numerical experiments with the PSD solver to support the theoretical results in previous sections. The finite difference search direction equations and Poisson equations are solved efficiently using the Fast Fourier Transform (FFT). Though we do not present it here, we can also implement the scheme by using the pseudo-spectral method for spatial discretization~\cite{boyd2001chebyshev, cheng2015fourier, feng2016preconditioned,hesthaven2007spectral}. 

	\subsection{Convergence test}
	\label{subsec-convergence}

In this numerical experiment, we apply the benchmark problem in  \cite{Christlieb14high,jones2013development} to show that our scheme is first order accurate in time. The convergence test is performed with the initial data given by 
	\begin{eqnarray}
\phi(x,y,0)=2\exp\left[\sin(\frac{2\pi x}{L_x})+\sin(\frac{2\pi y}{L_y})-2\right]+2.2\exp\left[-\sin(\frac{2\pi x}{L_x})-\sin(\frac{2\pi y}{L_y})-2\right]-1.
\label{eqn:init}
	\end{eqnarray}
We use a quadratic refinement path, \emph{i.e.}, $s=Ch^2$. At the final time $T=0.32$, we expect the global error to be $\mathcal{O}(s)+\mathcal{O}(h^2)=\mathcal{O}(h^2)$ in either the  $\ell^2$ or $\ell^\infty$ norm, as $h, s\to 0$.  Since an exact solution is not available, instead of calculating the error at the final time, we compute the Cauchy difference, which is defined as $\delta_\phi: =\phi_{h_f}-\mathcal{I}_c^f(\phi_{h_c})$, where $\mathcal{I}_c^f$ is a bilinear interpolation operator. This requires having a relatively coarse solution, parametrized by $h_c$, and a relatively fine solution, parametrized by $h_f$, where $h_c = 2 h_f$, at the same final time. The Cauchy difference is also expected to be $\mathcal{O}(s)+\mathcal{O}(h^2)=\mathcal{O}(h^2)$, as $h, s\to 0$. The other parameters are given by $L_x=L_y=3.2$, $\varepsilon=0.18$, $A=1.0$, $\eta=1.0$, $s=0.1h^2$. The norms of Cauchy difference, the convergence rates, average iteration number and average CPU time (in seconds) can be found in Table~\ref{tab:cov}. The  results confirm our expectation for the convergence order and also demonstrate the efficiency of our algorithm. Moreover, the semi-log scale of the residual $\nrm{r^n}_{\infty}$ with respect to the PSD iterations can be found in Fig. \ref{fig:complexity}, which confirms the expected geometric convergence rate of the PSD solver predicted by the theory in~\cite{feng2016preconditioned}.
 
\begin{table}[!htb]
	\begin{center}
		\caption{Errors, convergence rates, average iteration numbers and average CPU time (in seconds) for each time step. Parameters are given in the
		text, and the initial data is defined in \eqref{eqn:init}. The refinement path is $s=0.1h^2$. } \label{tab:cov}
		\begin{tabular}{ccccccc}
			\hline $h_c$&$h_{f}$&$\nrm{\delta_\phi}_{2}$ & Rate&$\#_{iter}$ &$T_{cpu}(h_f)$\\
			\hline $\frac{3.2}{16}$&$\frac{3.2}{32}$& $1.8131\times 10^{-2}$&- & 27  & 0.0136
			\\$\frac{3.2}{32}$&$\frac{3.2}{64}$& $4.2725\times 10^{-3}$ &2.09& 25 &0.0493 
			\\ $\frac{3.2}{64}$ &$\frac{3.2}{128}$& $7.7211\times 10^{-4}$&2.47 & 19 &0.1534& 
			\\ $\frac{3.2}{128}$ & $\frac{3.2}{256}$ &$1.7075\times 10^{-4}$&2.18 &11 &0.4809
			\\ $\frac{3.2}{256}$ & $\frac{3.2}{512}$&$4.0134\times 10^{-5}$&2.09&05 &2.1579\\
			\hline
		\end{tabular}
	\end{center}
\end{table}

\begin{figure}[h]
	\begin{center}
	\includegraphics[height=0.6\textwidth]{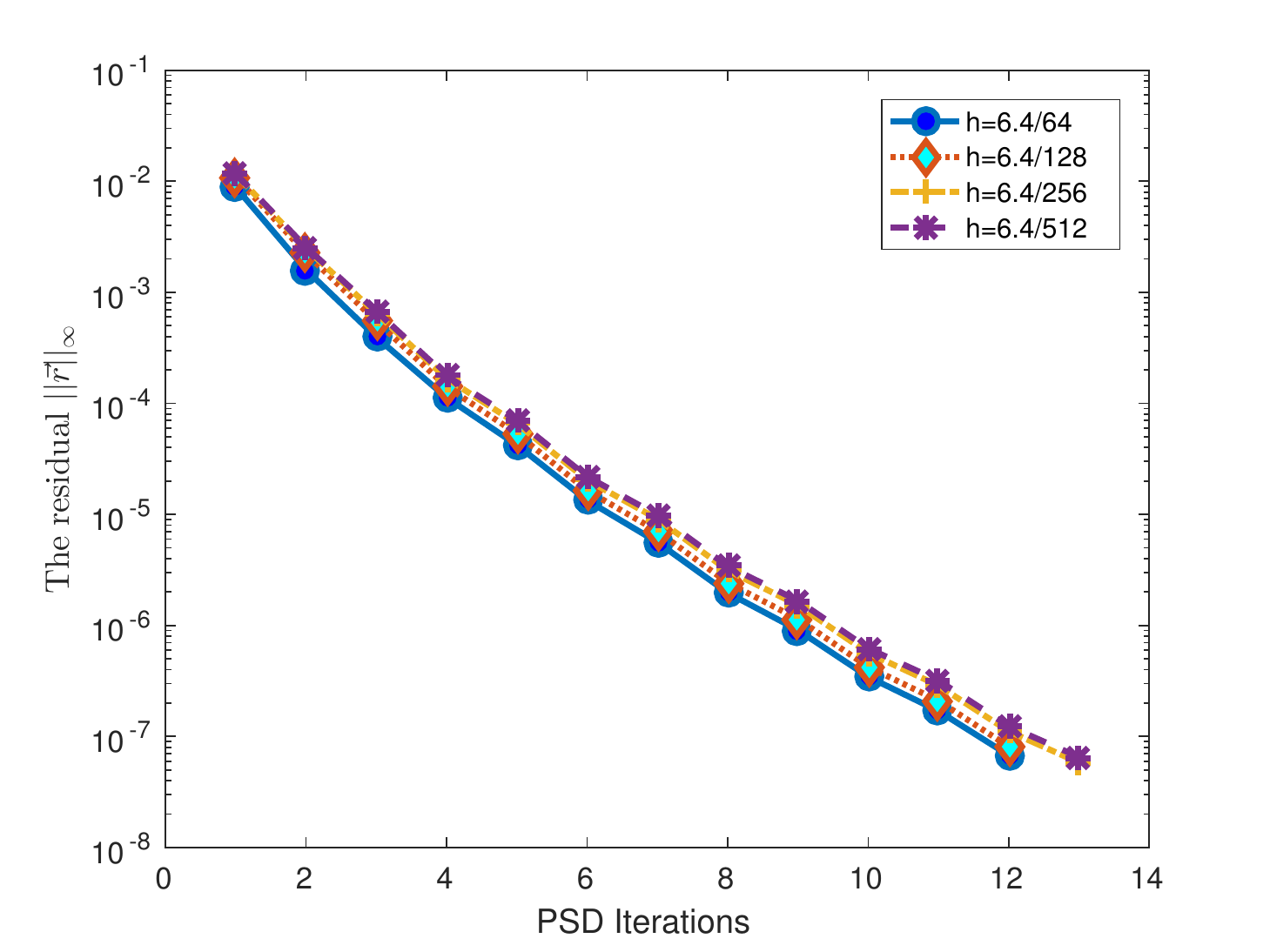} 
	\caption{Solver convergence (complexity) test for the problem defined in Section~\ref{subsec-convergence}.  The only difference is that for this test, we use a fixed time step size, $s=1.0\times 10^{-5}$ for all runs. We plot on a semi-log scale of the residual $\nrm{r^n}_{\infty}$ with respect to the PSD iteration count $n$ at the 20th time step, \emph{i.e.}, $t=2.0\times 10^{-4}$. The initial data is defined in \eqref{eqn:init}, $L_x=L_y=6.4$, $\varepsilon=0.18$, $A=1.0$, $\eta=1.0$, and the grid sizes are as specified in the legend. We observe that the residual is decreasing by a nearly constant factor for each iteration.} 
	\label{fig:complexity}
	\end{center}
\end{figure}

\subsection{Long time simulation of benchmark problem}
Time snapshots of the benchmark problem in  \cite{Christlieb14high,jones2013development} for the long time test can be found in Fig.~\ref{fig:long-time-fch}.  The initial data is defined in \eqref{eqn:init} and  the other parameters are given by $L_x=L_y=6.4$, $\varepsilon=0.18$, $A=1.0$, $\eta=1.0$, $s=1\times 10^{-4}$ and $h={6.4}/{256}$. The numerical results in Fig.~\ref{fig:long-time-fch} are consistent with earlier work on this topic in \cite{Christlieb14high,jones2013development}.
\begin{figure}[h]
	\begin{center}
		\begin{subfigure}{0.48\textwidth}
			\includegraphics[height=0.48\textwidth,width=0.48\textwidth]{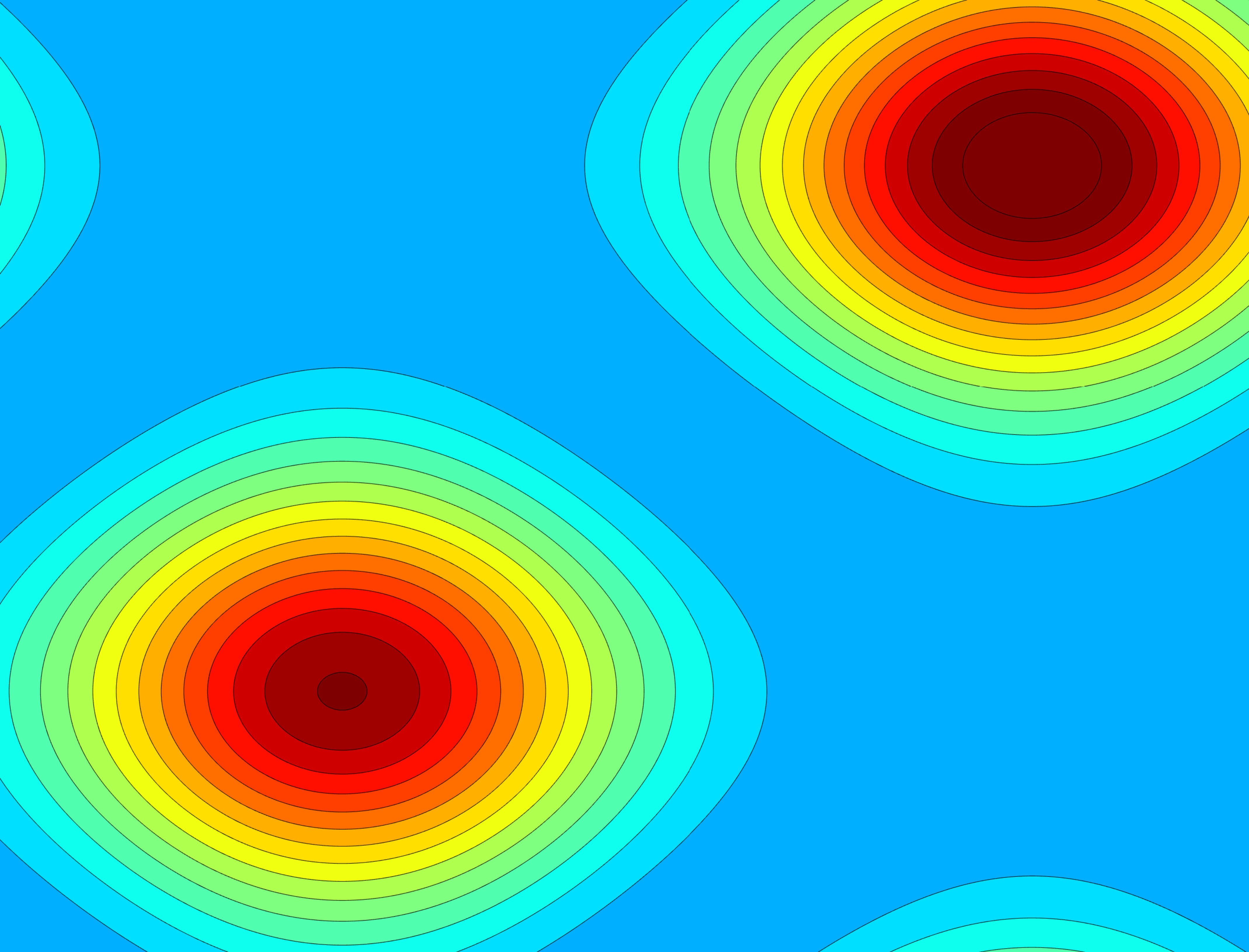} 
			\includegraphics[height=0.48\textwidth,width=0.48\textwidth]{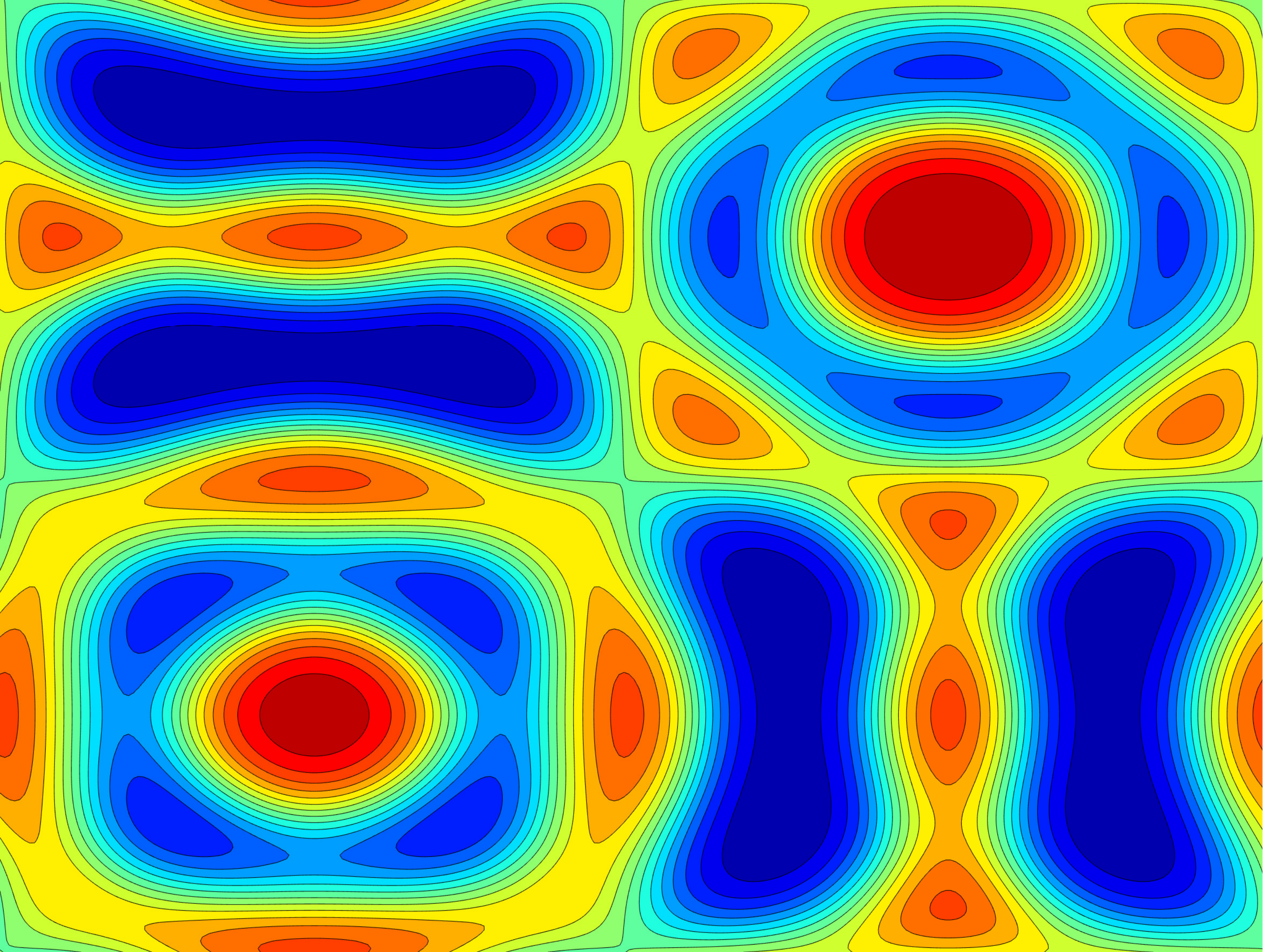} 
			\caption*{$t=0, 0.2$}
		\end{subfigure}
		\begin{subfigure}{0.48\textwidth}
			\includegraphics[height=0.48\textwidth,width=0.48\textwidth]{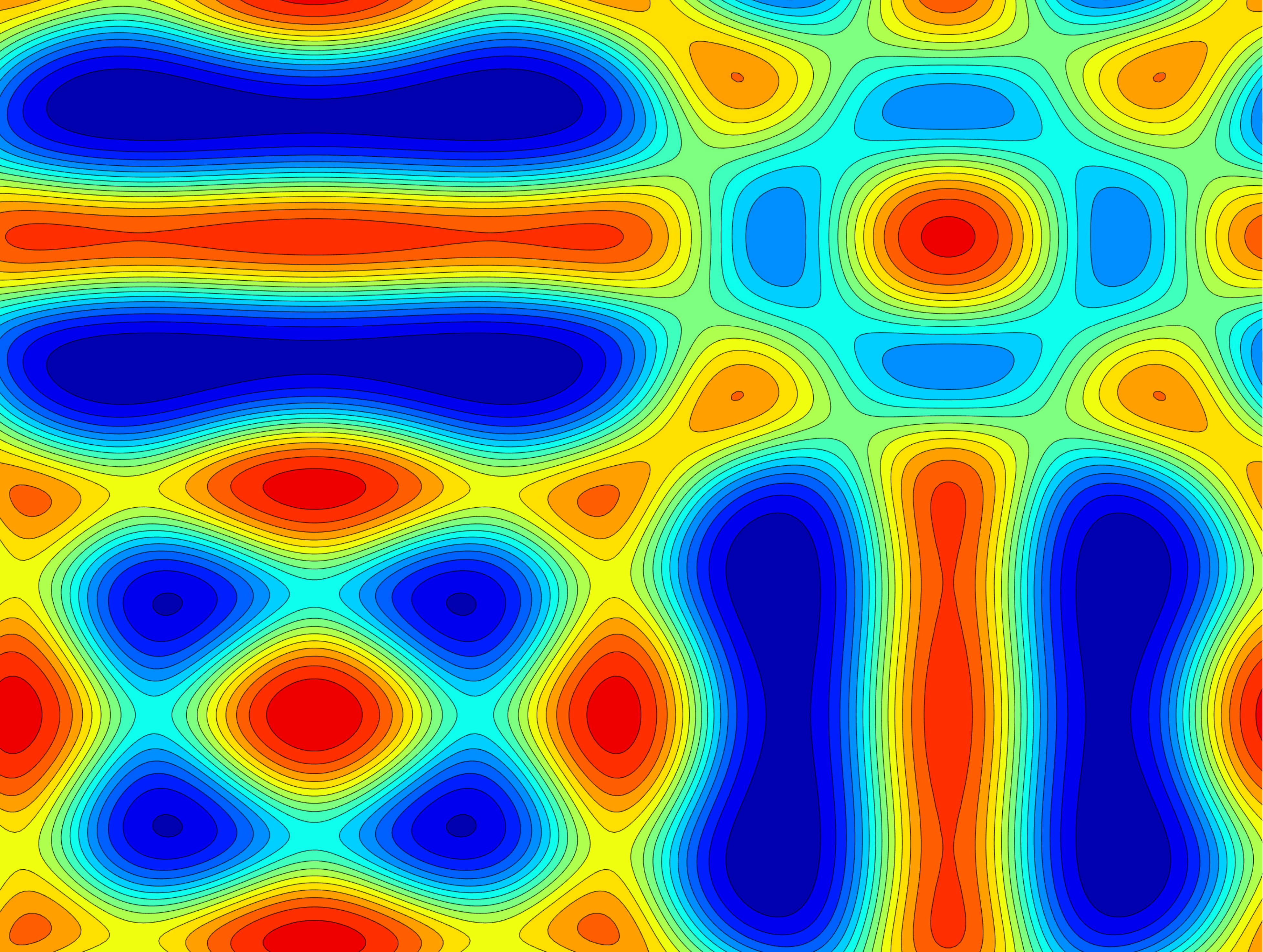} 
			\includegraphics[height=0.48\textwidth,width=0.48\textwidth]{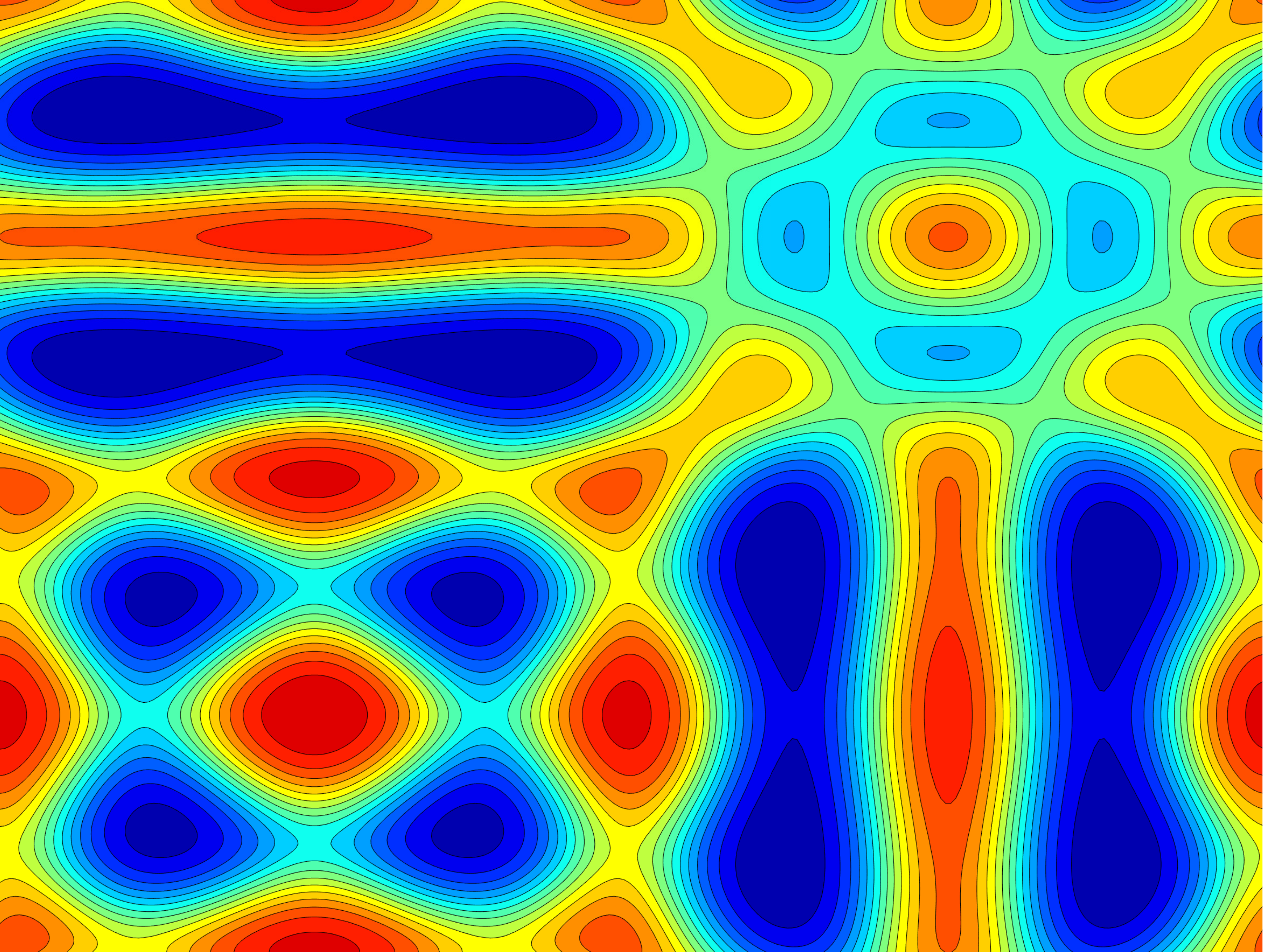}
			\caption*{$t=1, 10$}
		\end{subfigure}
		\begin{subfigure}{0.48\textwidth}
			\includegraphics[height=0.48\textwidth,width=0.48\textwidth]{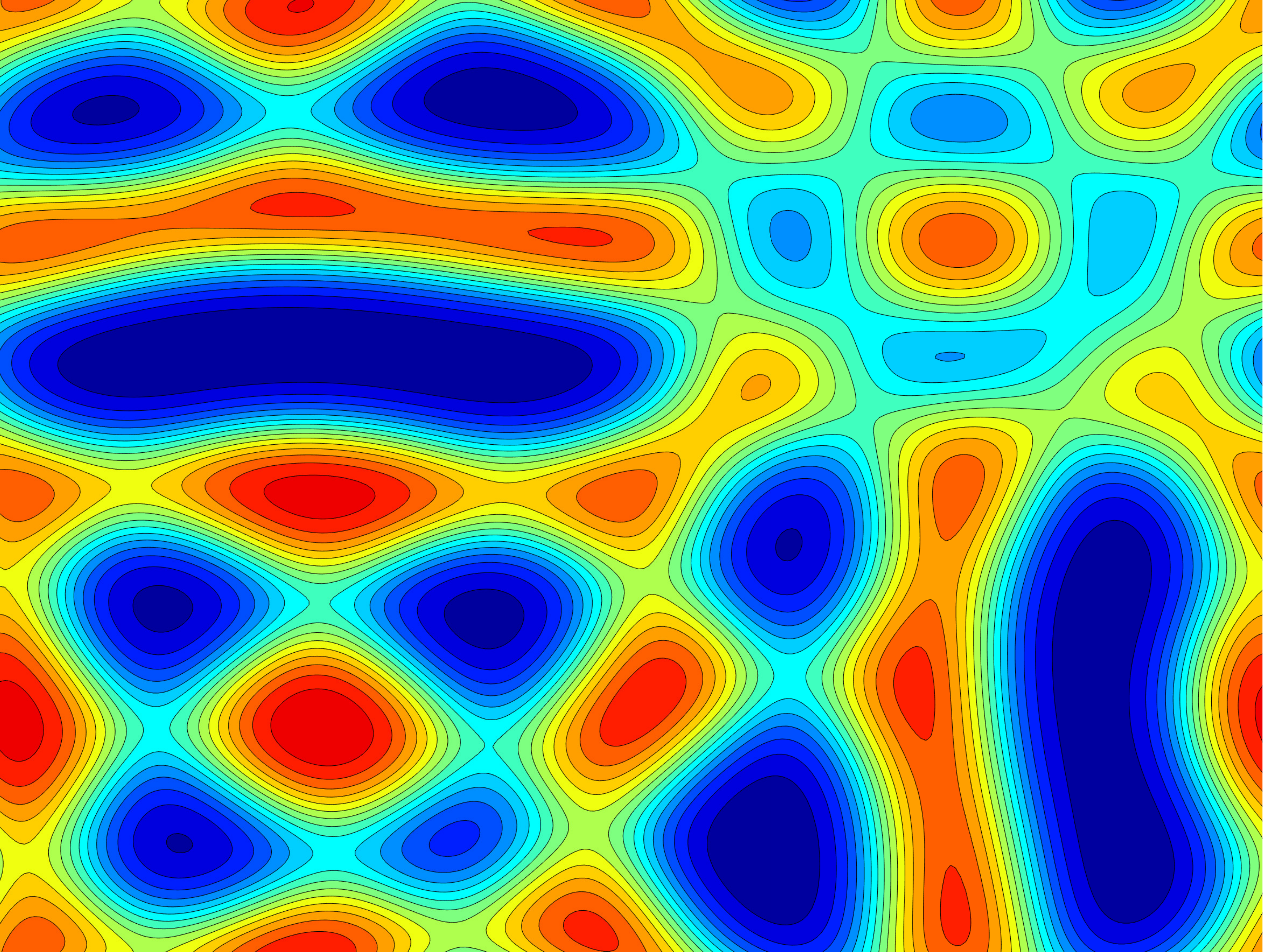} 
			\includegraphics[height=0.48\textwidth,width=0.48\textwidth]{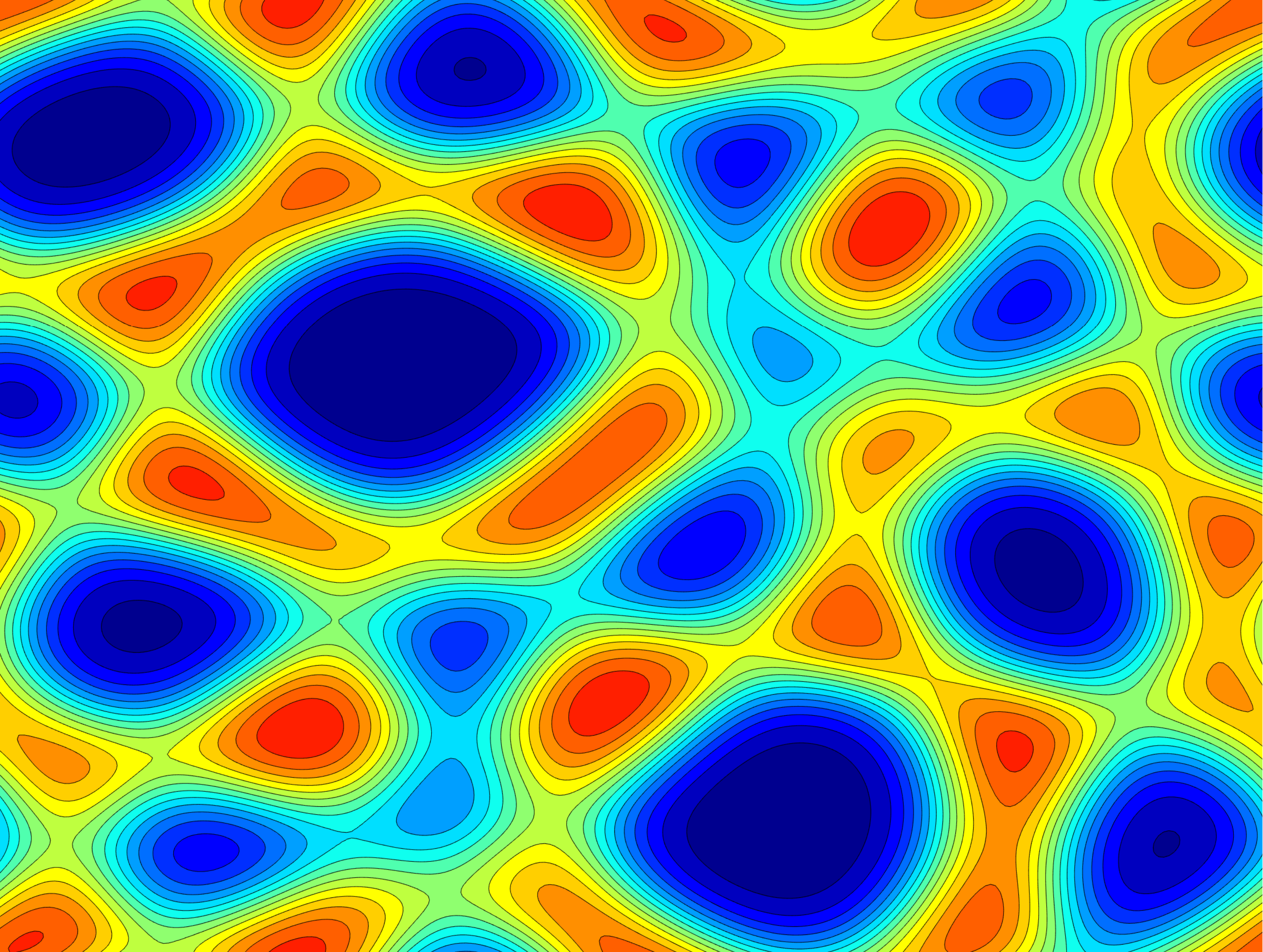}
			\caption*{$t=20, 50$}
		\end{subfigure}
		\begin{subfigure}{0.48\textwidth}
			\includegraphics[height=0.48\textwidth,width=0.48\textwidth]{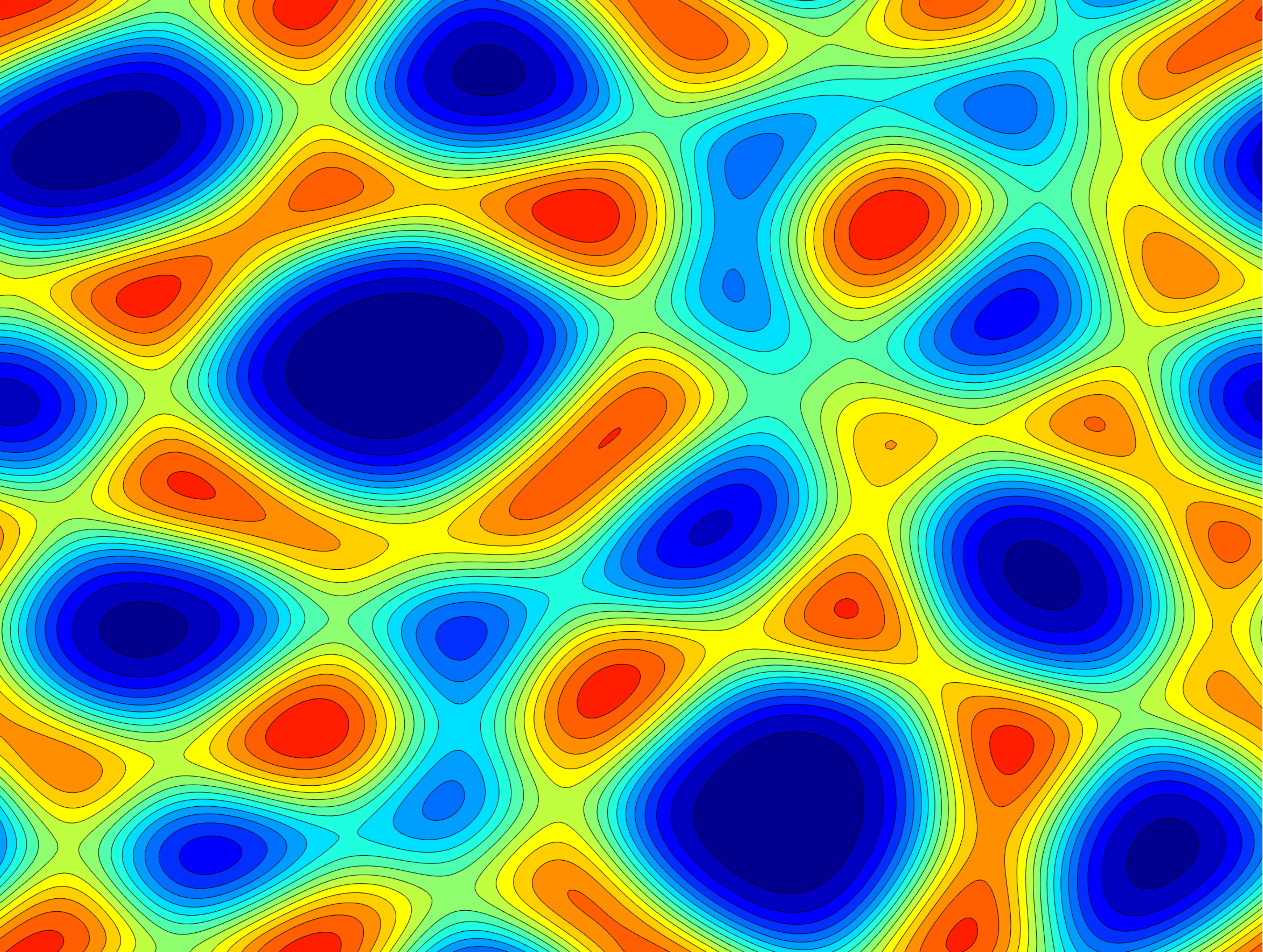}
			\includegraphics[height=0.48\textwidth,width=0.48\textwidth]{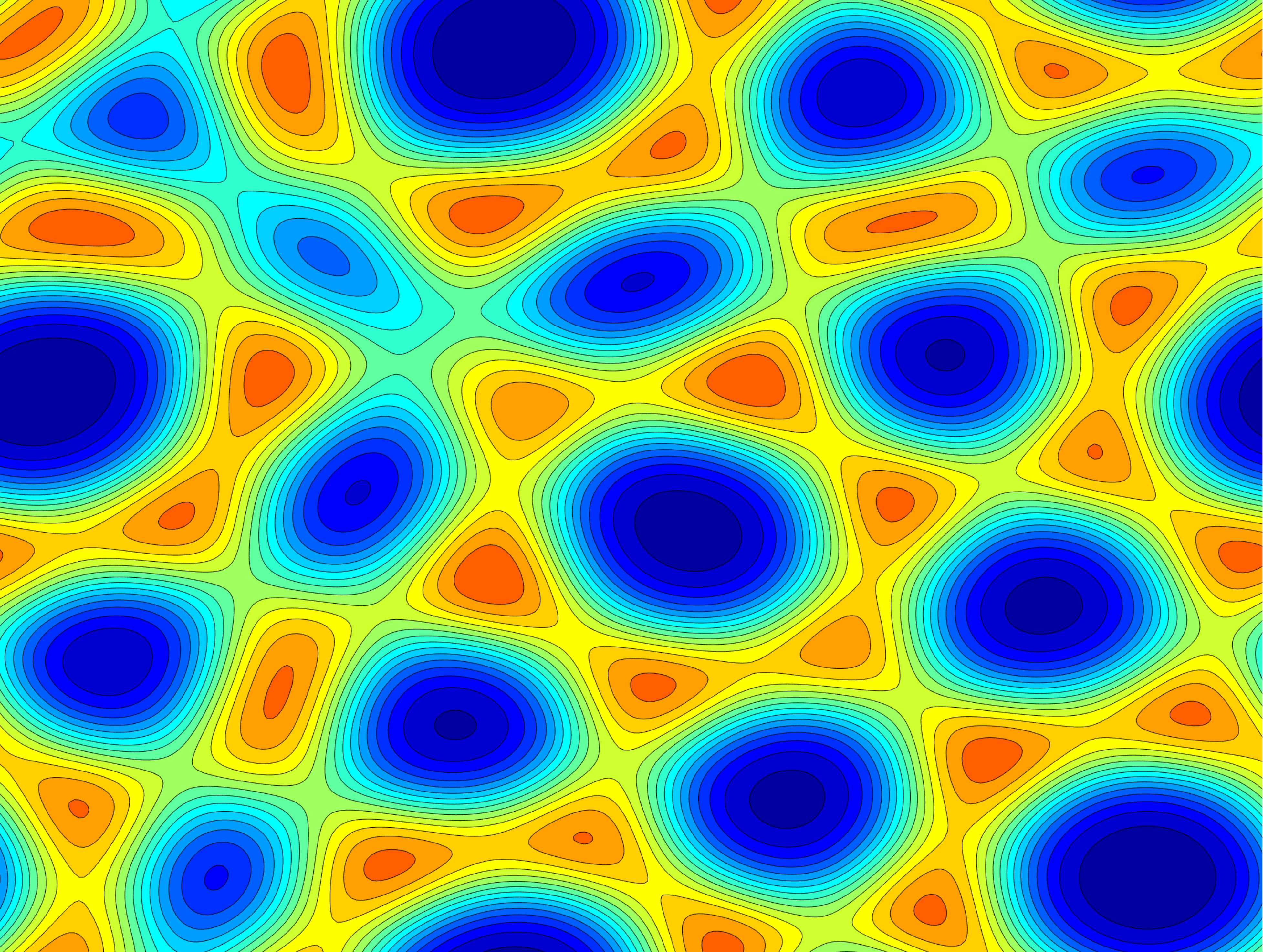}
			\caption*{$t=100,200$}
		\end{subfigure}
		\caption{Time snapshots of the benchmark problem with initial data in \eqref{eqn:init} at $t=0,0.2, 1, 10, 20, 50, 100 ~ \text{and}~200$. The parameters are
			$\varepsilon = 0.18$, $\Omega=(0,6.4)^2$, $A=1.0$, $\eta=1.0$,  $s=1\times 10^{-4}$ and $h={6.4}/{256}$. The numerical results are consistent with earlier work on this topic in \cite{Christlieb14high,jones2013development}.}
		\label{fig:long-time-fch}
	\end{center}
\end{figure}

\subsection{Spinodal decomposition, energy dissipation and mass conservation }
In the second test, we simulate the spinodal decomposition, energy-dissipation and mass-conservation. We  start with the following random initial
condition:
\begin{eqnarray}
\phi(x,y,0)=0.5+0.05(2r-1) , 
\label{eqn:init2}
\end{eqnarray}
where $r$ are the real random numbers in $(0,1)$. The rest of parameters are given by $L_x=L_y=12.8$, $\varepsilon=0.1$, $A=1.0$, $\eta=1.0$, $s=1\times 10^{-4}$ and $h={12.8}/{256}$. The snapshots of spinodal decomposition with initial data in \eqref{eqn:init2} can be found in Fig.~\ref{fig:long-time-micelles}. This experiment also simulates the amphiphilic di-block co-polymer mixtures of polyethylene. The numerical results are consistent with chemical experiments on this topic in \cite{jain2004consequences}. Fig.~\ref{fig:zoonbox} indicates that the simulation has captured all the structural elements with hyperbolic (saddle) surfaces identified in this work, such as short cylinders with one and two beads, cylinder undulation, Y-junction and bilayer-cylinder junction can be found in zoom boxes.

The evolutions of discrete energy and mass for the simulation depicted in Fig.~\ref{fig:long-time-micelles} are presented in Fig.~\ref{fig:energy-mass}. The evolution of discrete energy in Fig.~\ref{fig:energy-mass} demonstrates the energy dissipation property, and the evolution of discrete mass clearly indicates the mass conservation property.    

\begin{figure}[h]
	\begin{center}
		\begin{subfigure}{0.48\textwidth}
			\includegraphics[height=0.48\textwidth,width=0.48\textwidth]{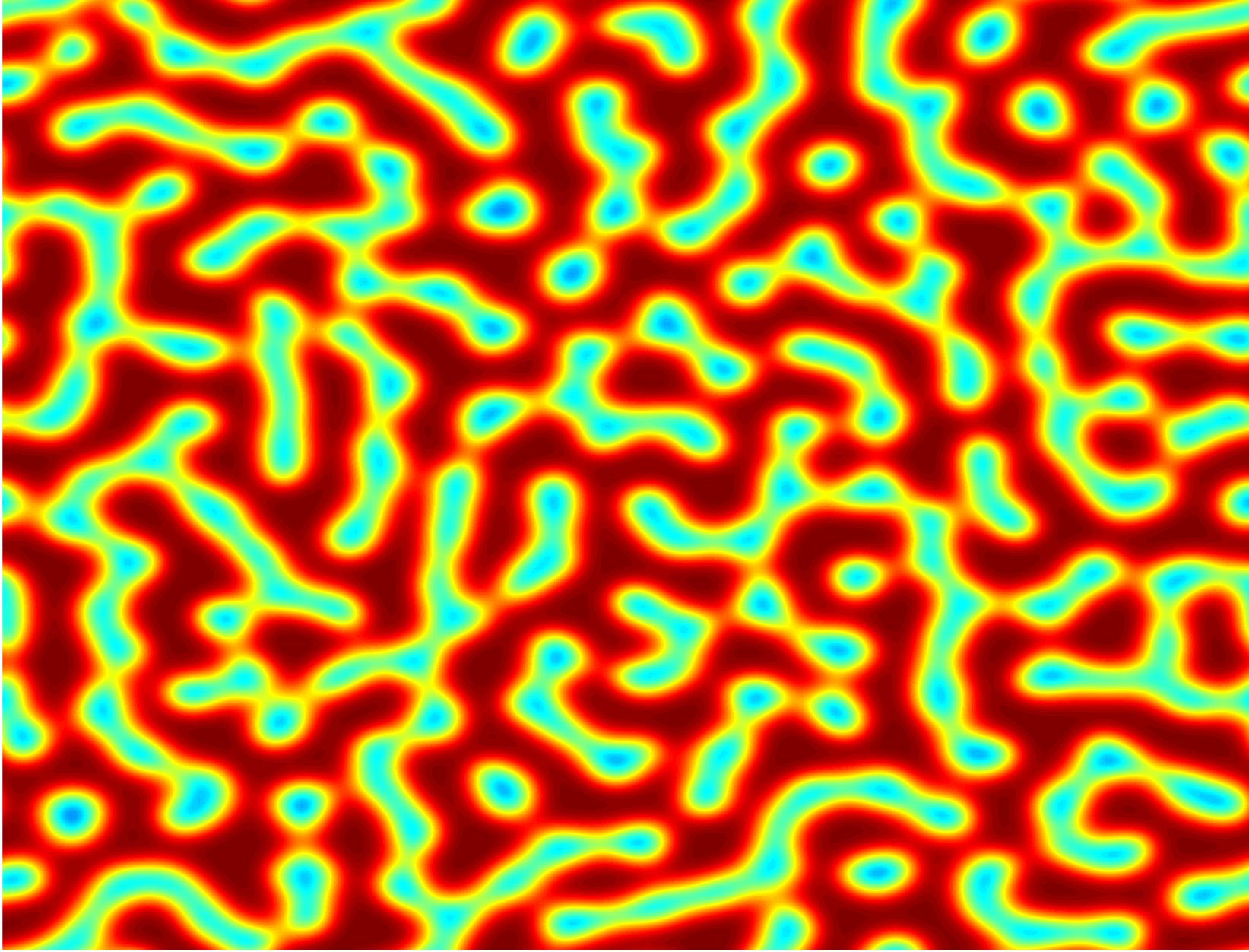} 
			\includegraphics[height=0.48\textwidth,width=0.48\textwidth]{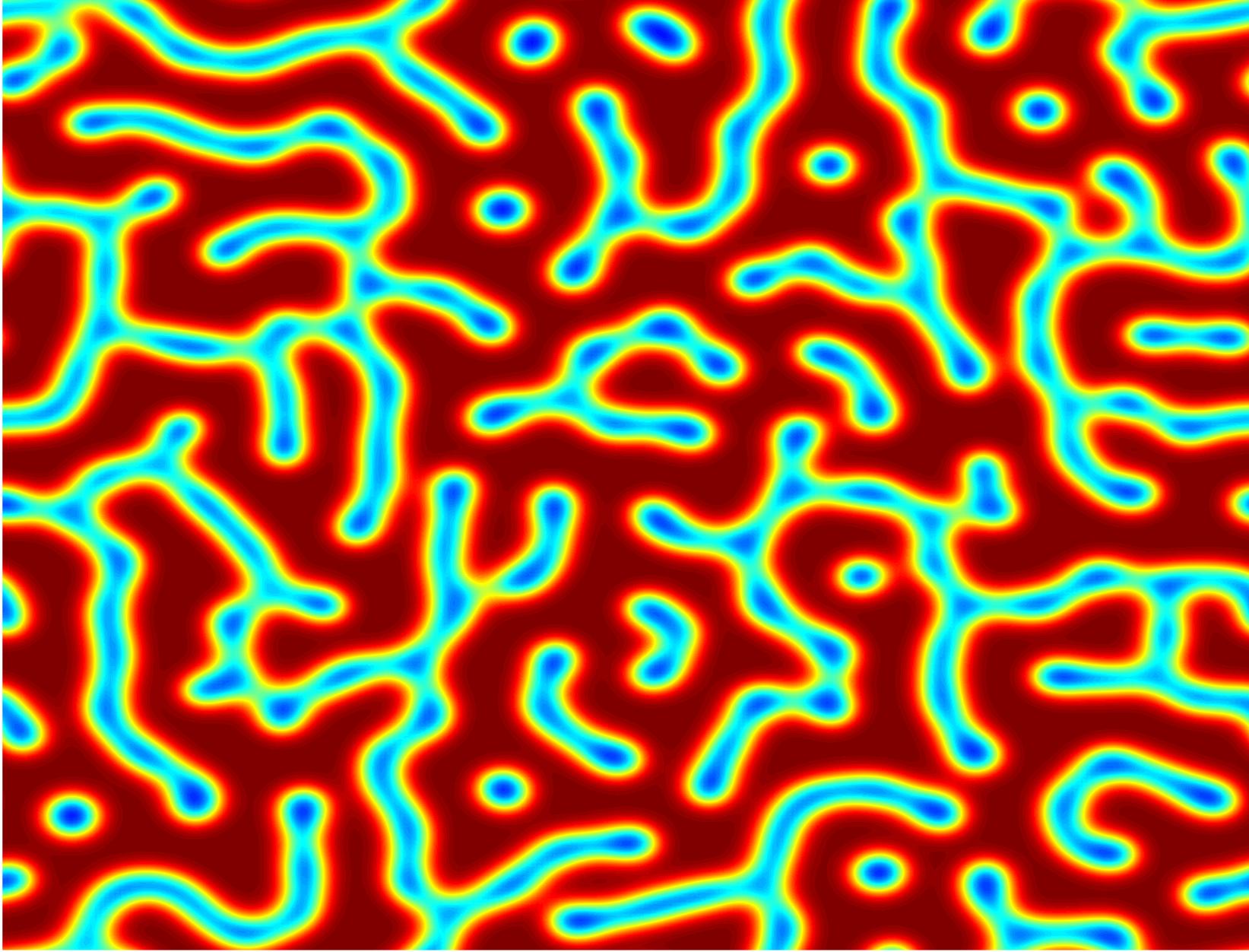} 
			\caption*{$t=0.01, 0.05$}
		\end{subfigure}
		\begin{subfigure}{0.48\textwidth}
			\includegraphics[height=0.48\textwidth,width=0.48\textwidth]{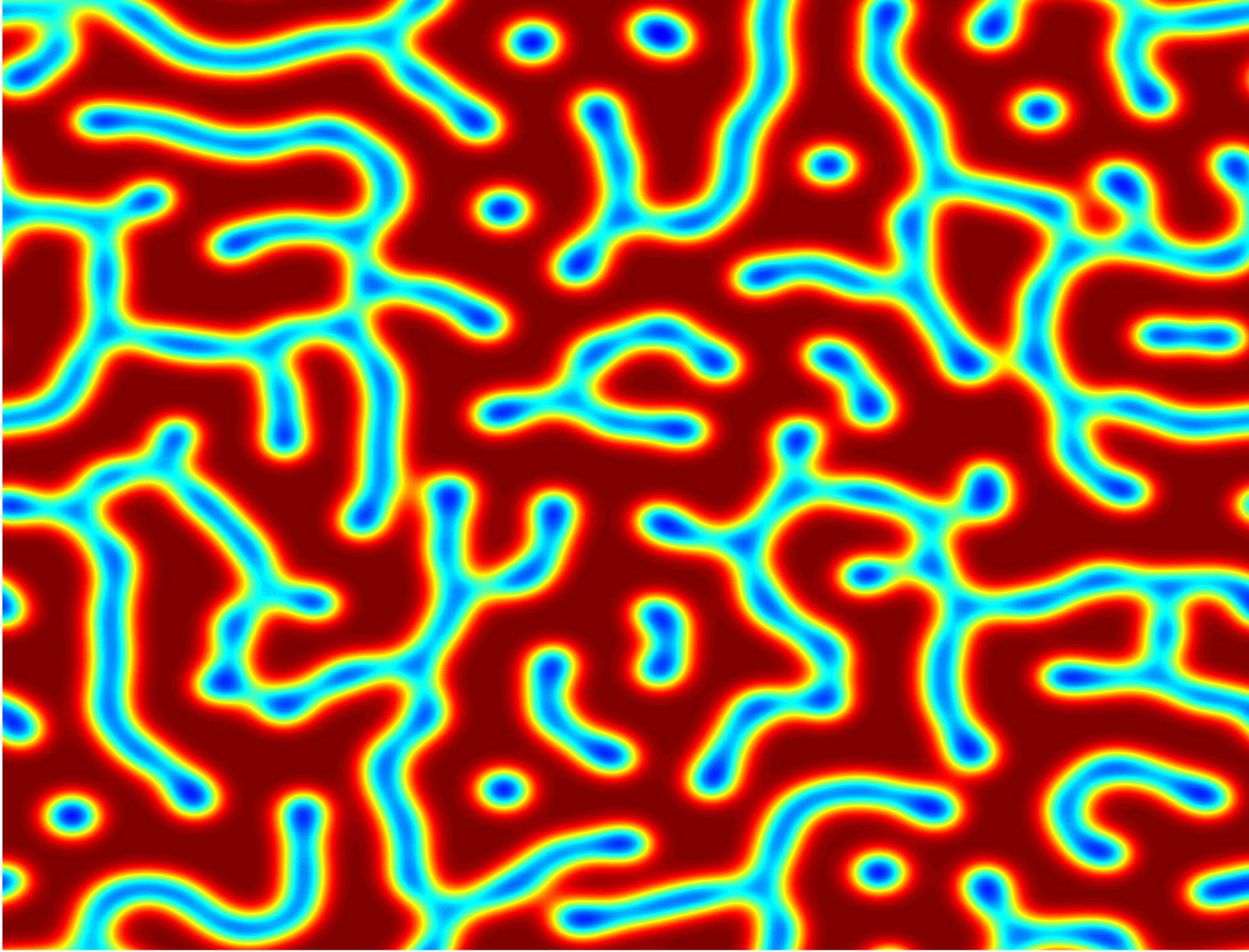} 
			\includegraphics[height=0.48\textwidth,width=0.48\textwidth]{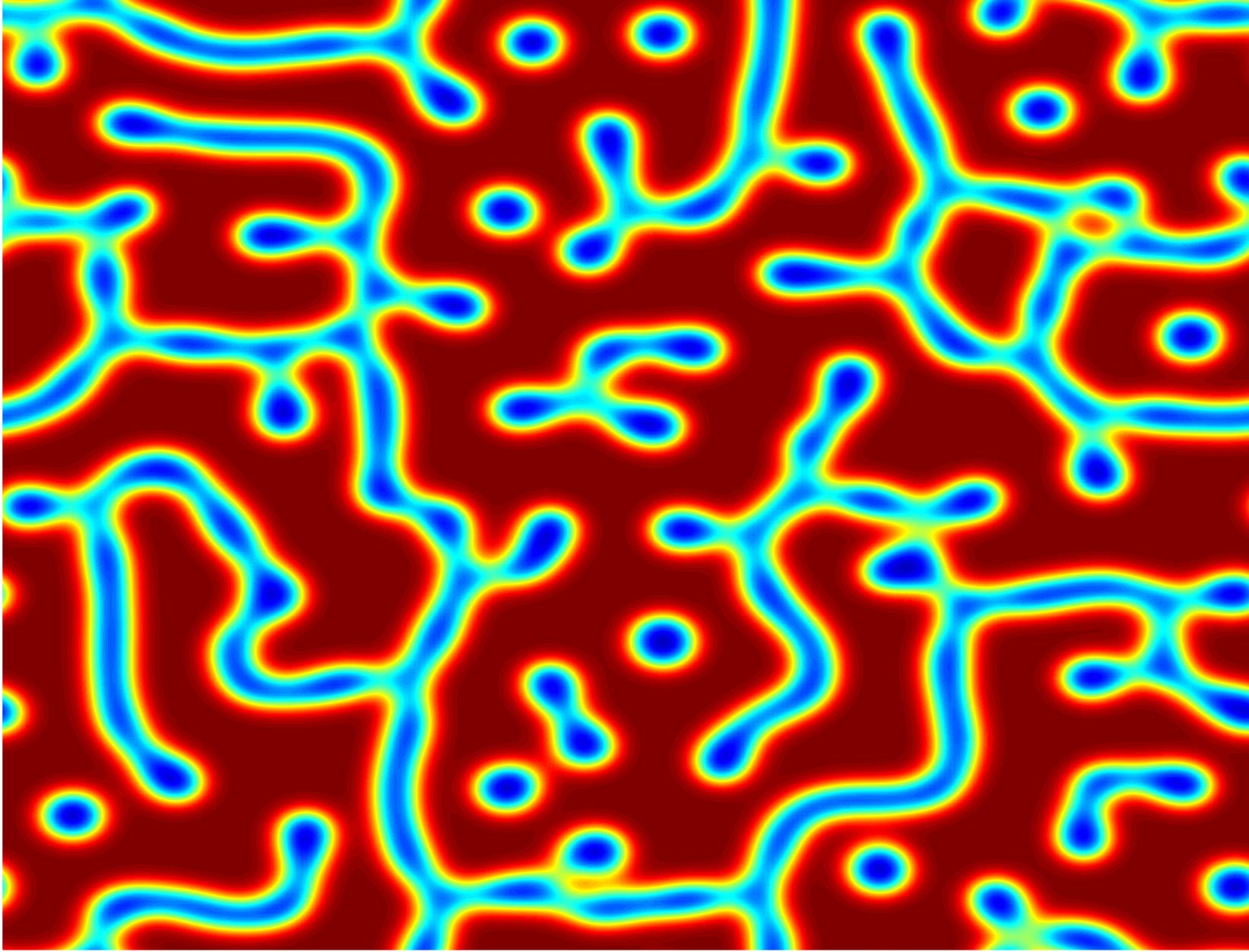}
			\caption*{$t=0.1, 0.5$}
		\end{subfigure}
		\begin{subfigure}{0.48\textwidth}
			\includegraphics[height=0.48\textwidth,width=0.48\textwidth]{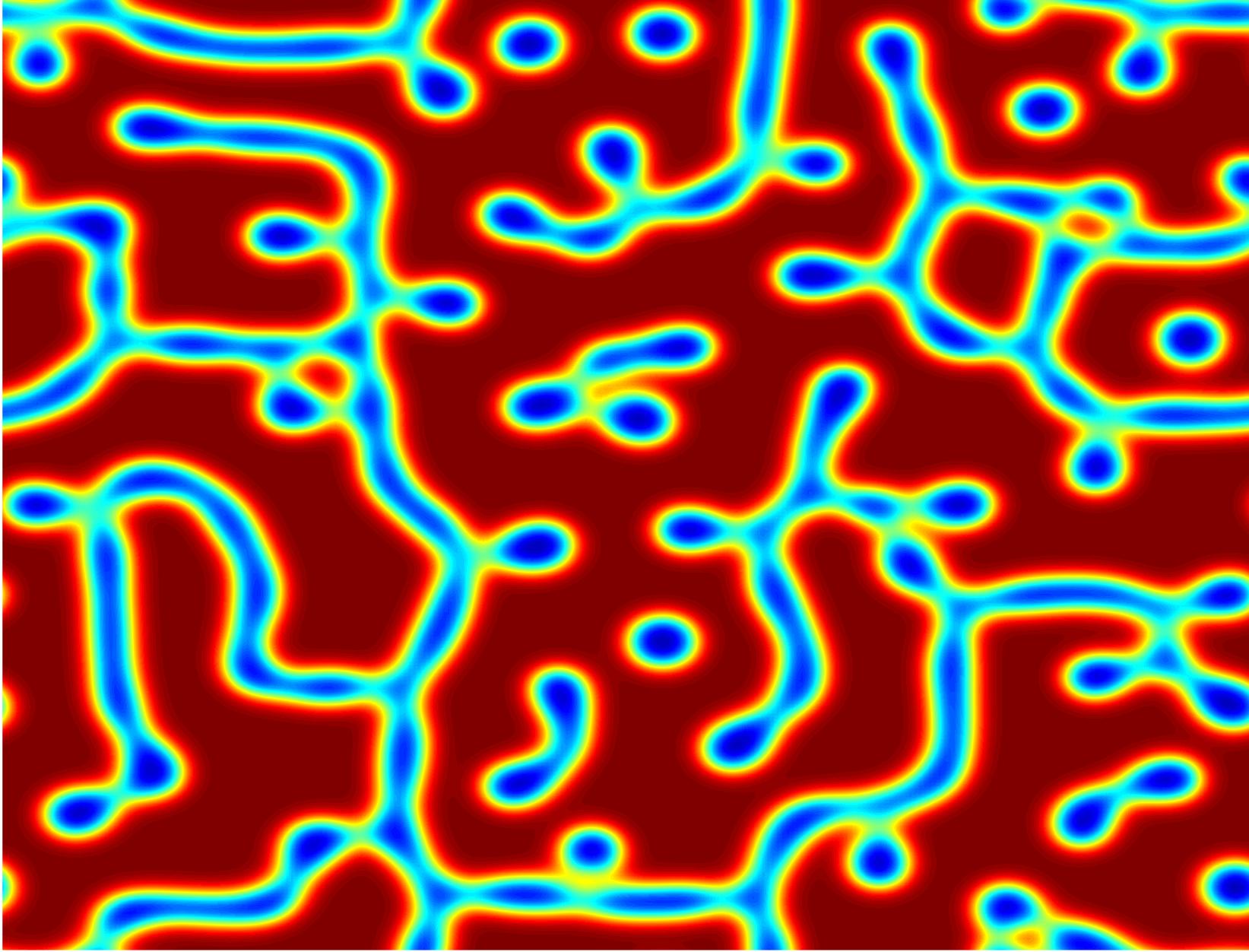} 
			\includegraphics[height=0.48\textwidth,width=0.48\textwidth]{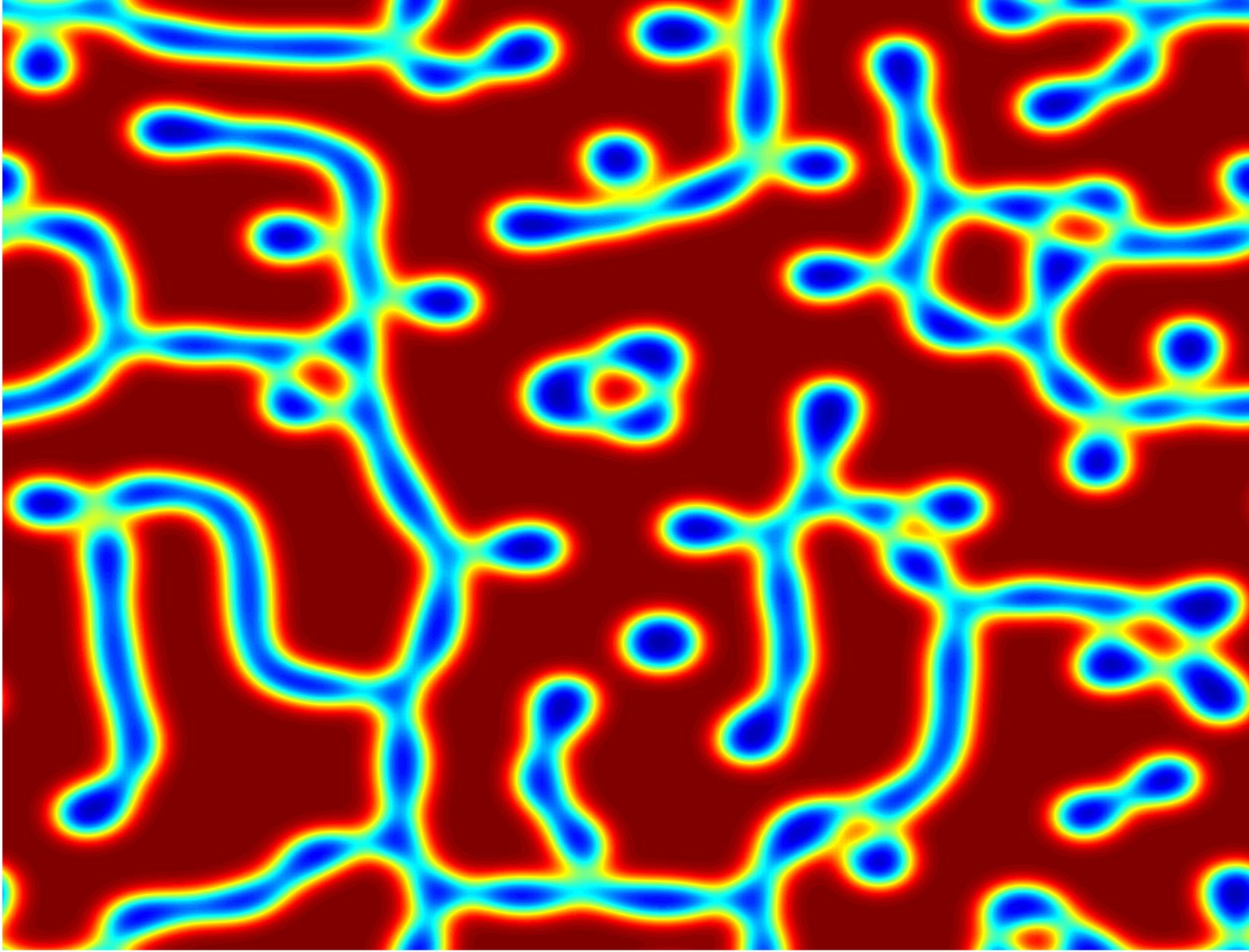}
			\caption*{$t=1, 2$}
		\end{subfigure}
		\begin{subfigure}{0.48\textwidth}
			\includegraphics[height=0.48\textwidth,width=0.48\textwidth]{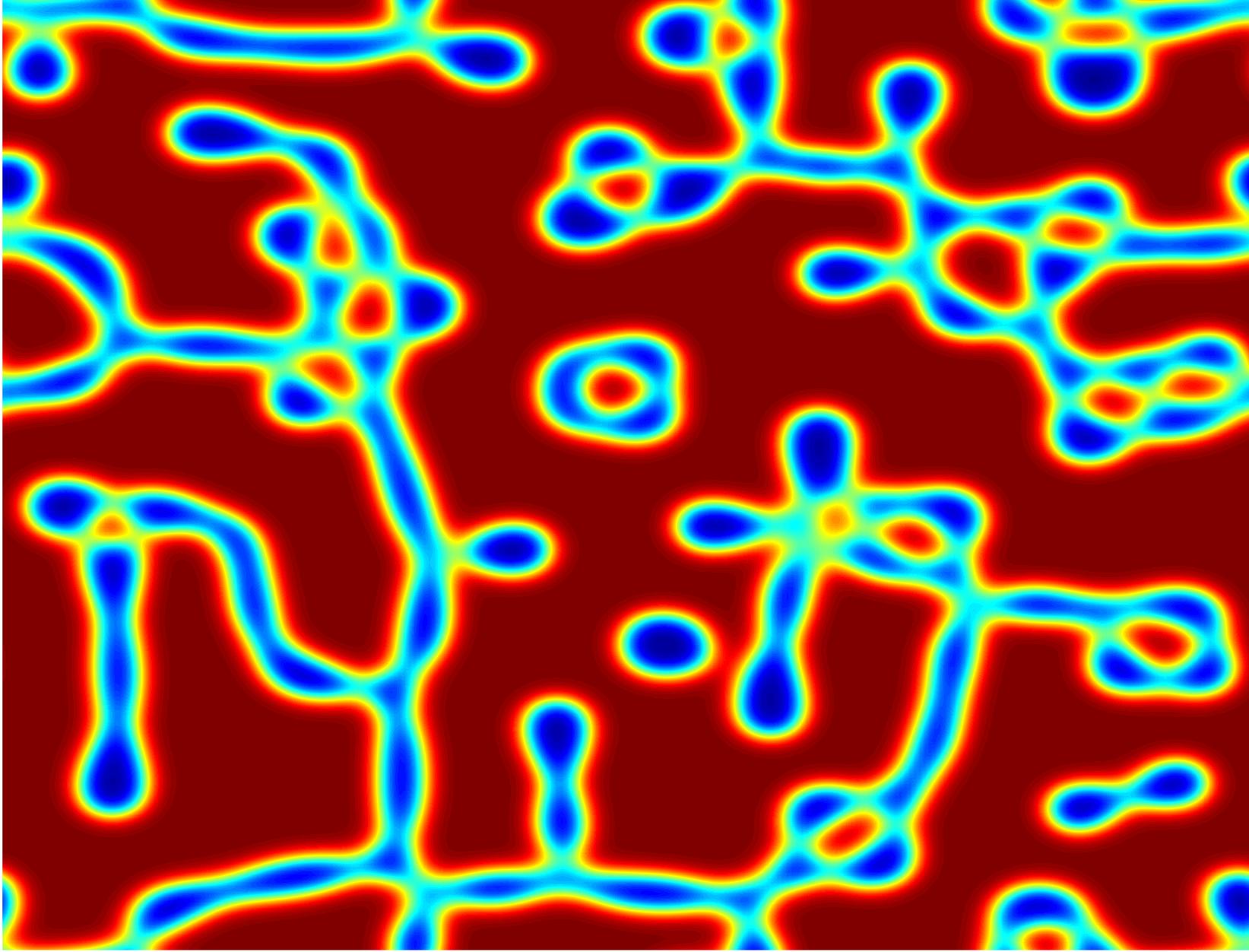}
			\includegraphics[height=0.48\textwidth,width=0.48\textwidth]{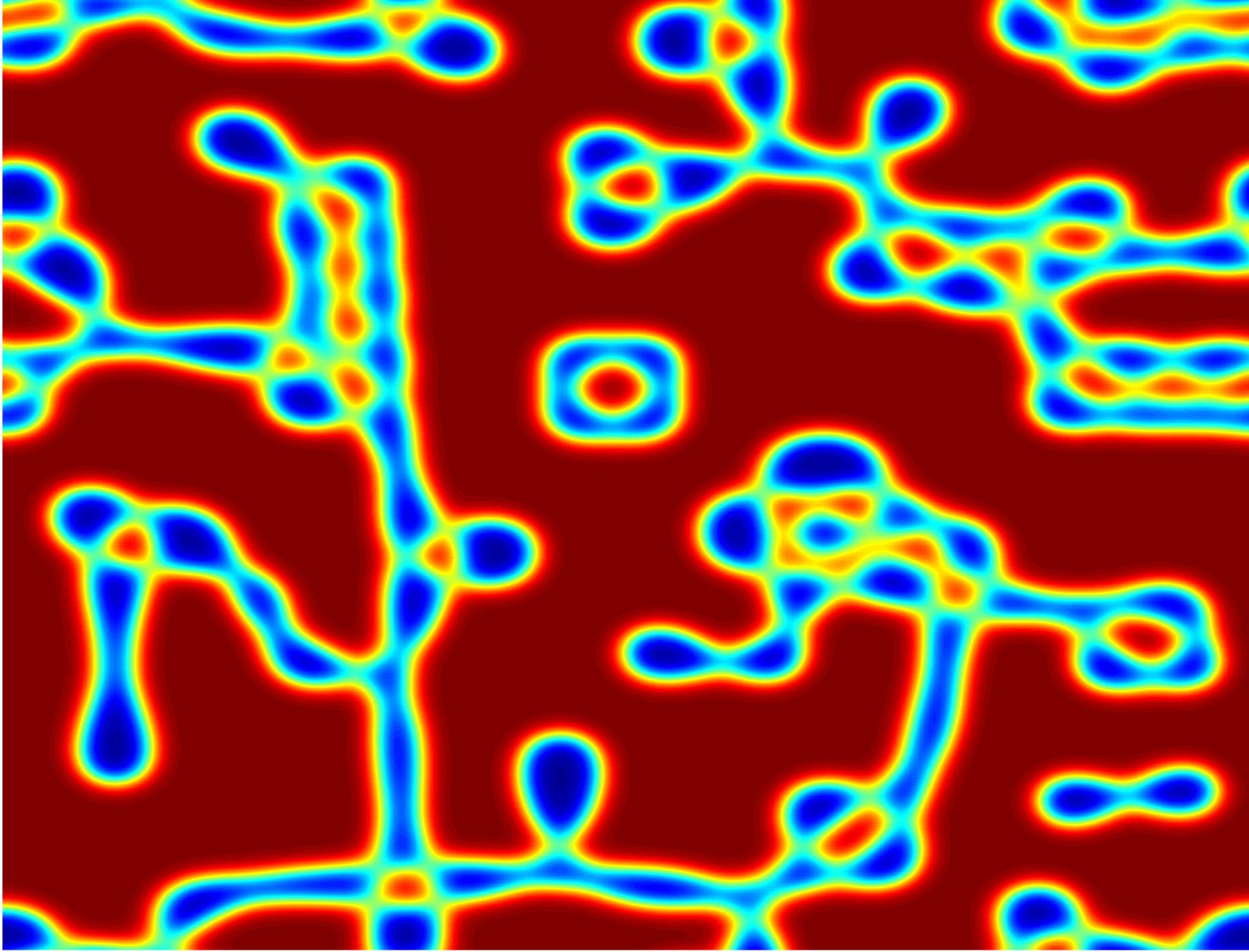}
			\caption*{$t=5,10$}
		\end{subfigure}
		\caption{Snapshots of spinodal decomposition with initial data in \eqref{eqn:init2} at $t=0.01,0.05, 0.1, 0.5, 1, 2, 5 ~ \text{and}~10$. The parameters are $\varepsilon = 0.1, \Omega=[12.8]^2$, $A=1.0$, $\eta=1.0$, $s=1\times 10^{-4}$ and $h={12.8}/{256}$.}
		\label{fig:long-time-micelles}
	\end{center}
\end{figure}

\begin{figure}[ht]\centering
\begin{tikzpicture}[zoomboxarray]
    \node [image node] { \includegraphics[width=0.45\textwidth,height=0.45\textwidth]{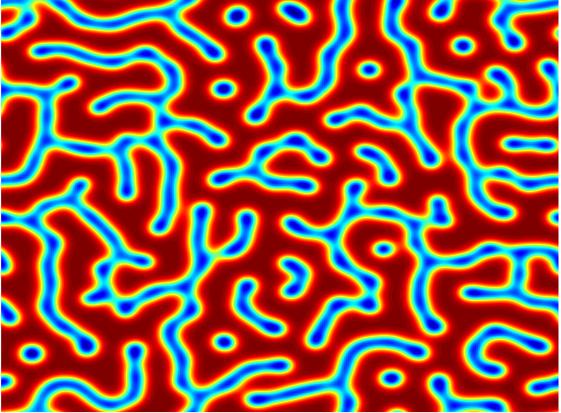} };
    \zoombox[color code=orange,magnification=3.0]{0.45,0.6}
    \zoombox[color code=blue,magnification=2.2]{0.89,0.35}
    \zoombox[color code=yellow,magnification=3.3]{0.45,0.26}
    \zoombox[color code=red,magnification=2.5]{0.89,0.15}      
\end{tikzpicture}
\caption{Left: Snapshots of spinodal decomposition at $t=0.05$. Right: Zoom boxes. Yellow box: Short cylinders with an undulation; Red box: Short cylinders with two undulations; Blue box: Bilayer- Cylinder junction; Orange box: Y-junction. Those numerical results are consistent with chemical experiments on this topic in \cite{jain2004consequences}.}\label{fig:zoonbox}
\end{figure}

\begin{figure}[htp]
\centering
\includegraphics[width=0.48\textwidth]{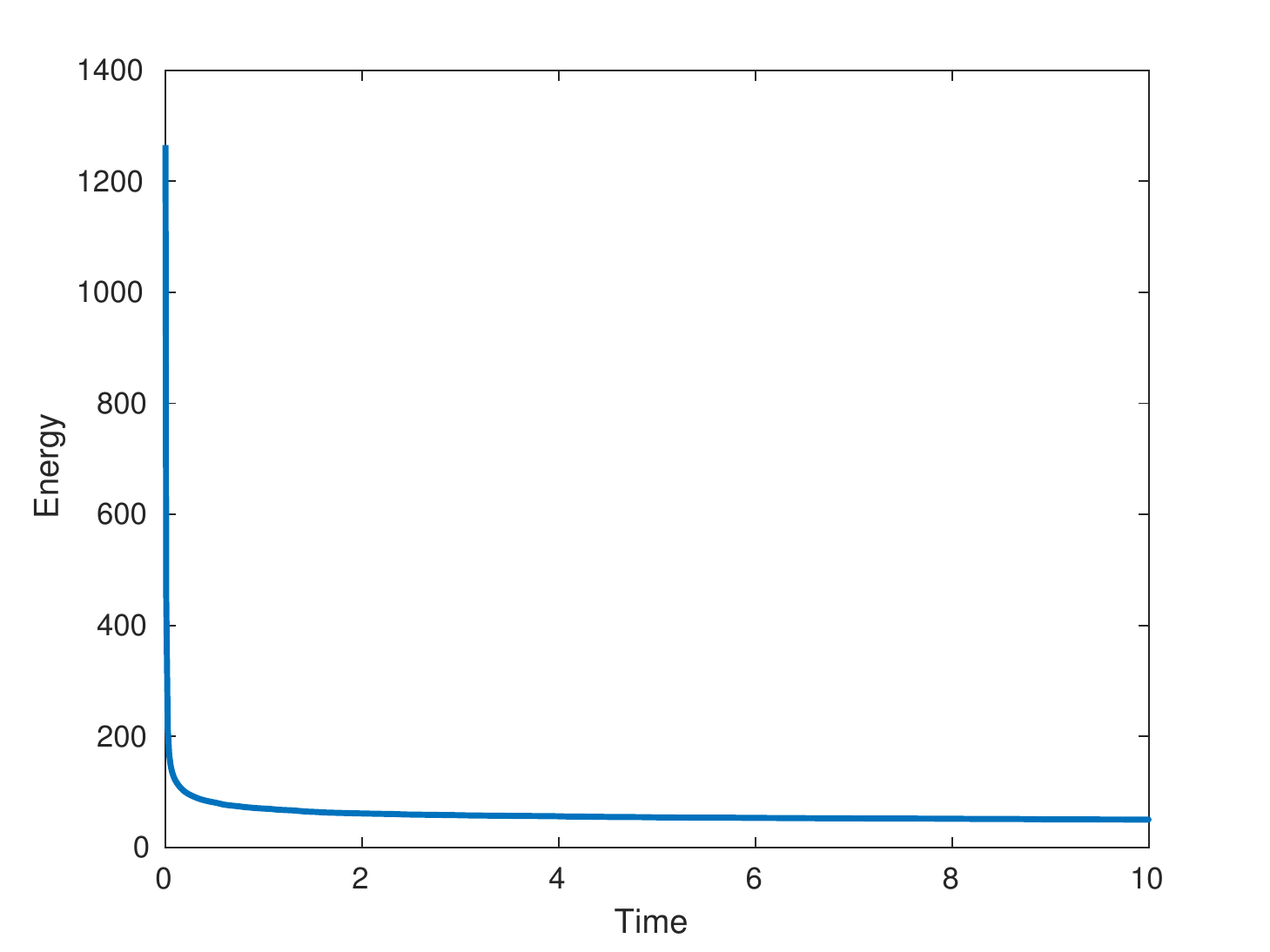}
\includegraphics[width=0.48\textwidth]{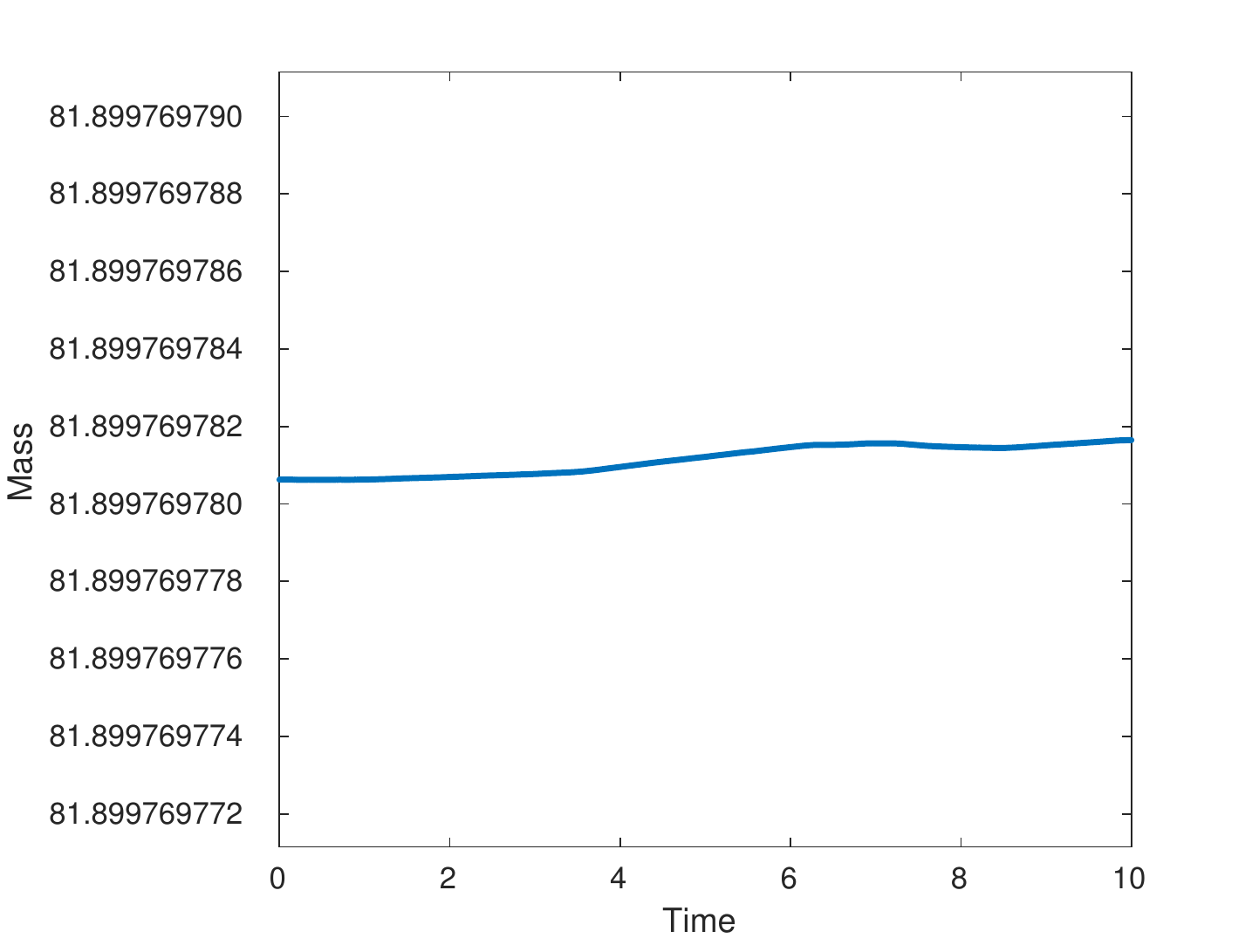}\hfill
\caption{The evolutions of discrete energy and mass for the simulation depicted in Fig.~\ref{fig:long-time-micelles}. Left: Energy Dissipation; Right: Mass Conservation. }
\label{fig:energy-mass}
\end{figure}

\section{Conclusion}\label{sec:conclusion}
We propose and analyze an efficient numerical scheme for solving the FCH equation. Both the unique solvability and unconditional energy stability have been theoretically justified. Based on the global in time $H_{\rm per}^2$ stability of the numerical scheme, we present a rigorous convergence analysis. An efficient PSD method \cite{feng2016preconditioned} is applied to solve the nonlinear system. Various numerical results are also presented, including the first order in time accuracy test,  energy-dissipation, mass-conservation test and the micelle network structures simulation.    
  
\section{Acknowledgements}
JSL acknowledges partial support from NSF-CHE 1035218, NSF-DMR 1105409 and NSF-DMS 1217273. CW acknowledges partial support from NSF-DMS 1418689. SMW acknowledges partial support from NSF-DMS 1418692. 
\bibliographystyle{siam}
\bibliography{fch}
\end{document}